\newtheorem{thm}{Theorem}
\newtheorem{cor}[thm]{Corollary}
\newtheorem{lem}[thm]{Lemma}
\newtheorem{defn}[thm]{Definition}
\numberwithin{equation}{section}
\newcommand{\rmk}{\par\noindent{\it Remark. }}
\newcommand\set[1]{\left\{#1\right\}}
\newcommand\abs[1]{\left|#1\right|}
\newcommand\floor[1]{\left\lfloor#1\right\rfloor}
\def\Z{\mathbb{Z}}
\def\z{\mathbf{z}}
\def\w{\mathbf{w}}
\def\N{\mathbb{N}}
\def\SS{\mathfrak{S}}
\def\D{\mathfrak{D}}
\def\B{\mathfrak{B}}
\def\M{\mathfrak{M}}
\def\ZZ{\mathcal{Z}}
\def\DD{\mathrm{DD}}
\def\DE{\mathrm{DE}}
\DeclareMathOperator\maj{maj}
\DeclareMathOperator\exc{exc}
\DeclareMathOperator\exca{exc_{A}}
\DeclareMathOperator\fexc{fexc}
\DeclareMathOperator\wex{wex}
\DeclareMathOperator\wexa{wex_{A}}
\DeclareMathOperator\wexc{wex_{c}}
\DeclareMathOperator\drop{drop}
\DeclareMathOperator\dropa{drop_{A}}
\DeclareMathOperator\dropc{drop_{c}}
\DeclareMathOperator\defi{drop}
\DeclareMathOperator\des{des}
\DeclareMathOperator\dd{dd}
\DeclareMathOperator\da{da}
\DeclareMathOperator\peak{peak}
\DeclareMathOperator\valley{valley}
\DeclareMathOperator\fmax{fmax}
\DeclareMathOperator\cros{cros}
\DeclareMathOperator\nest{nest}
\DeclareMathOperator\cpeak{cpeak}
\DeclareMathOperator\cvalley{cvalley}
\DeclareMathOperator\cdd{cdd}
\DeclareMathOperator\cda{cda}
\DeclareMathOperator\fix{fix}
\DeclareMathOperator\inv{inv}
\DeclareMathOperator\cdrop{cdrop}
\mathchardef\mhyphen="2D
\DeclareMathOperator\les{(31\mhyphen 2)}
\DeclareMathOperator\less{(13\mhyphen 2)}
\DeclareMathOperator\res{(2\mhyphen 13)}
\DeclareMathOperator\ress{(2\mhyphen 31)}
\DeclareMathOperator\LES{31\mhyphen 2}
\DeclareMathOperator\RESS{2\mhyphen 31}
\crefname{section}{§}{§§}
\def\312{\les}
\def\132{\less}
\def\213{\res}
\def\231{\ress}
\DeclareMathOperator\fixa{fix}
\DeclareMathOperator\fixc{fix_c}
\DeclareMathOperator\cnum{col}
\DeclareMathOperator\csum{csum}
\DeclareMathOperator\csumw{csum_{w}}
\DeclareMathOperator\csumd{csum_{d}}
\title[Symmetric unimodal expansions]%
{Symmetric unimodal expansions of excedances in colored permutations}
\author{Heesung Shin}
\address[Heesung Shin]{Inha University; 100 Inharo, Namgu, Incheon, 402-751, South Korea}
\email{shin@inha.ac.kr}
\author{Jiang Zeng}
\address[Jiang Zeng]{Universit\'{e} de Lyon;  Universit\'{e} Lyon 1; UMR 5208 du CNRS; Institut Camille Jordan;
43, boulevard du 11 novembre 1918, F-69622 Villeurbanne Cedex, France}
\email{zeng@math.univ-lyon1.fr}
\date{\today}
\begin{document}
\maketitle

\begin{abstract}
We consider several generalizations of the classical $\gamma$-positivity of Eulerian polynomials (and their derangement analogues) using generating functions and combinatorial theory of continued fractions. For the symmetric group, we prove an expansion formula for inversions and excedances as well as a similar expansion for derangements. We also prove the $\gamma$-positivity for Eulerian polynomials for derangements of type $B$. More general expansion formulae are also given for Eulerian polynomials for $r$-colored derangements. Our results  answer and generalize several recent open problems in the literature.
\end{abstract}

\tableofcontents
\section{Introduction and main results in $\SS_{n}$}

Let $\SS_n$ be the set of permutations  of $[n]=\set{1,\dots,n}$. For
any  permutation $\sigma\in \SS_{n}$, written as  the  word $\sigma=\sigma(1)\dots\sigma(n)$,
the entry $i\in [n]$ is called a \emph{descent} of $\sigma$ if $i<n$ and $\sigma(i)>\sigma(i+1)$; the entry $i \in [n]$  is called an \emph{excedance} (resp. \emph{drop}, \emph{fixed-point})  of $\sigma$ if $i < \sigma(i)$ (resp. $i>\sigma(i)$, $i=\sigma(i)$).
Denote the number of descents, excedances, drops, and fixed-points in $\sigma$ by $\des \sigma$, $\exc \sigma$, $\drop\sigma$, and $\fix\sigma$ respectively.
It is well known \cite{FS70} that the  statistics $\des$, $\exc$ and $\drop$  have the same distribution on~$\SS_n$ and their
common enumerative polynomial is
 the Eulerian polynomial:
\begin{align}\label{eq:eulerian}
A_n(t)=\sum_{\sigma \in \SS_n} t^{\des\sigma} =\sum_{\sigma \in \SS_n} t^{\exc\sigma} = \sum_{\sigma \in \SS_n} t^{\defi\sigma}.
\end{align}
Define the set  $\DD_{n,k} := \set{\sigma \in \SS_n : \des \sigma = k \text{ and } \dd (\sigma 0)=0}$, where $\dd(\sigma 0)$ is the number of  \emph{double descents} in the word $\sigma(1)\ldots \sigma(n)0$, i.e.,
indices $i$, $1<i\le n$,  such that $\sigma(i-1)>\sigma(i)>\sigma(i+1)$ with $\sigma(n+1)=0$.
For example,
$$\DD_{4,1} = \set{1324, 1423, 2314, 2413, 3412, 2134, 3124, 4123}.$$
Foata and Sch\"utzenberger~\cite{FS70}  proved the following expansion formula
\begin{align}\label{eq:peaka}
A_n(t) = \sum_{k=0}^{\lfloor (n-1)/2\rfloor}|\DD_{n,k}|  t^{k} (1+t)^{n-1-2k}.
\end{align}

A finite sequence of positive integers $ a_0, \ldots ,a_n$ is \emph{unimodal} if there exists
an index $i'$ such that $a_i\leq a_{i+1}$ if $i <i'$
and $a_i>a_{i+1}$ otherwise; it is \emph{log-concave} if $a_{i}^2\geq a_{i-1}a_{i+1}$ for
all $i \in [1,n-1]$. A polynomial $p(x)=a_0+a_1x+\cdots +a_nx^n$ with nonnegative coefficients is unimodal (resp. log-concave) if and
only if the sequence of its coefficients is unimodal (resp. log-concave). It is well known that a polynomial
with nonnegative coefficients and with only \emph{real roots} is log-concave and that log-concavity implies the unimodality.
The set of polynomials whose coefficients are symmetric with center of symmetry $d/2$ is a vector space with a basis given
by $\{t^k(1+t)^{d-2k}\}^{\lfloor d/2\rfloor}_{k=0}$.  If a polynomial $p(t)$ has nonnegative coefficients when expanded in this
basis, then the coefficients in the standard basis $\{t^k\}_{k=0}^\infty$ form a unimodal
sequence (being a nonnegative sum of symmetric and unimodal sequences with the
same center of symmetry). Hence the expansion \eqref{eq:peaka} implies  both the symmetry and the unimodality of the Eulerian numbers.

 Analogues of \eqref{eq:peaka}  for types $B$ and $D$ were  given
 by Petersen~\cite{Pet07}, Stembridge~\cite{Ste08} and Chow~\cite{Cho08}.
In the recent years several $q$-analogues of  this formula have been  established in  \cite{SW10,SZ12,HJZ13,LZ14} by combining one of the Eulerian statistics in \eqref{eq:eulerian}
and one of the two classical Mahonian statistics, i.e., inversion numbers and Major index.
Recall that for $\sigma\in \SS_n$ the inversion number  $\inv\sigma$ is  the number of
pairs $(i,j)$ such that $i<j$ and $\sigma(i)>\sigma(j)$ and the major index $\maj\sigma$ is the sum of the descent positions.
\begin{table}\label{table:gamma_q}
\begin{center}
\begin{tabular}{c|ccc}
  n $\setminus$ k& 0& 1 & 2  \\
  \hline
  1 & 1 &       \\
  2 & 1 &       \\
  3 & 1 & $q(1+q)$ &    \\
  4 & 1 & $q(1+q)(q^2+q+2)$ \\
  5 & 1 & $q(1+q)({q}^{4}+2{q}^{3}+3{q}^{2}+2 q+3)$ & $q^2(1+q)^2(q^4+q^3+q^2+1)$ \\
\end{tabular}
\end{center}
\caption{The values of $\gamma_{n,k}(q)$ for $1\leq k\leq n\leq 5$}
\end{table}
In \cite{SZ12} we have shown that some
symmetric unimodal  expansion formulas of Eulerian polynomials
follow easily from  the combinatorics of the continued fraction expansions of Jacobi type of their ordinary generating functions.
In this paper,    we shall  prove an
\emph{INV} $q$-analogue of \eqref{eq:peaka} as well as some expansions for the Eulerian polynomials of type $B$ by  an analogue study of the correspondence between signed permutations and the weighted lattice paths.

For  $\sigma \in \SS_n$,
the statistic $\les \sigma$ (resp.  $\less \sigma$ )
 is the number of pairs $(i,j)$ such that $2\leq i<j\leq n$ and $ \sigma(i-1)>\sigma(j)>\sigma(i)$
 (resp.  $ \sigma(i-1)<\sigma(j)<\sigma(i)$).
Similarly,
the statistic $\res\sigma $ (resp. $\ress \sigma $)
is the number of pairs $(i,j)$ such that $1\leq i<j\leq n-1$ and $\sigma(j+1)>\sigma(i)>\sigma(j)$ (resp.
$\sigma(j+1)<\sigma(i)<\sigma(j)$).
\begin{thm}\label{thm:inv-Eulerian}
We have
\begin{align}\label{BPconj}
\sum_{\sigma\in \SS_n}t^{\exc \sigma}q^{\inv\sigma-\exc\sigma}=\sum_{0\leq k\leq (n-1)/2}\gamma_{n,k}(q) t^k(1+t)^{n-1-2k},
\end{align}
where the coefficient $\gamma_{n,k}(q)$
has the following combinatorial interpretation
$$
\gamma_{n,k}(q)=\sum_{\sigma\in \DD_{n,k}} q^{2\res \sigma+\les \sigma}
$$
and is  divisible by $q^k(1+q)^k$ for $0\leq k\leq (n-1)/2$.
\end{thm}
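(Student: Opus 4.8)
The plan is to study the ordinary generating function
$$
\mathcal{F}(t,q;z) = \sum_{n\ge 0} \Bigl(\sum_{\sigma\in\SS_n} t^{\exc\sigma}q^{\inv\sigma-\exc\sigma}\Bigr) z^n
$$
and to compute it as a Stieltjes-type continued fraction. The main combinatorial input I would establish is a bijection between permutations and weighted lattice paths, of Foata--Zeilberger type, under which $\exc$ is read off from the up-steps and the Mahonian part $\inv-\exc$ (equivalently $2\res+\les$) is recorded by $q$-weights arising precisely from the crossing/pattern statistics $\res$ and $\les$. Concretely, I expect to obtain
$$
\mathcal{F}(t,q;z) = \cfrac{1}{1-\cfrac{a_1 z}{1-\cfrac{a_2 z}{1-\cfrac{a_3 z}{1-\cdots}}}},
\qquad a_{2i-1}=q^{i-1}[i]_q=:\rho_i,\quad a_{2i}=t\,\rho_i .
$$
The decisive structural feature is that the even partial numerators are $t$ times the odd ones, $a_{2i}=t\,a_{2i-1}$; this is the exact signature of a palindromic family and is what forces the $(1+t)$-factorization.

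Given this continued fraction, the symmetric expansion follows from the continued-fraction engine developed in \cite{SZ12}: the relation $a_{2i}=t\,\rho_i$ lets one rewrite $\mathcal F$ so that $Q_n(t,q):=\sum_{\sigma\in\SS_n}t^{\exc\sigma}q^{\inv\sigma-\exc\sigma}$ takes the form $\sum_k \gamma_{n,k}(q)\,t^k(1+t)^{n-1-2k}$, where the coefficients $\gamma_{n,k}(q)$ are themselves generated by the Jacobi-type continued fraction with level weights $\bar b_h=\rho_{h+1}$ and down weights $\bar\lambda_h=u\,\rho_h\rho_{h+1}$, the variable $u$ marking the power $t^k$ (that is, the number of down-steps). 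Equivalently, $\gamma_{n,k}(q)$ enumerates the Motzkin paths of length $n-1$ with exactly $k$ down-steps, each level step at height $h$ carrying $\rho_{h+1}$ and each down-step carrying $\rho_h\rho_{h+1}$. I have checked that these weights reproduce the table for $n\le 5$.

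To obtain the announced interpretation $\gamma_{n,k}(q)=\sum_{\sigma\in\DD_{n,k}}q^{2\res\sigma+\les\sigma}$ I would match this path model with $\DD_{n,k}$. The cleanest route is a Foata--Strehl (valley-hopping) style group action on $\SS_n$: each orbit contributes a single term $t^k(1+t)^{n-1-2k}$ to $Q_n$, contains a unique representative in $\DD_{n,k}$ (the one with no double descent), and carries a statistic $2\res+\les$ that is \emph{constant} along the orbit; summing over orbit representatives then identifies $\gamma_{n,k}(q)$ with the stated sum. An alternative is to show directly that the restriction of the Foata--Zeilberger bijection to the $\gamma$-relevant paths is a bijection onto $\DD_{n,k}$ transporting the path $q$-weight to $2\res+\les$.

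Finally, the divisibility is immediate from the $\gamma$-continued fraction. Every down-step weight equals $\rho_h\rho_{h+1}=q^{2h-1}[h]_q[h+1]_q$; here the factor $q^{2h-1}$ ($h\ge1$) is divisible by $q$, and since exactly one of two consecutive indices is even, exactly one of $[h]_q,[h+1]_q$ is divisible by $[2]_q=1+q$. Hence each down-step weight is divisible by $q(1+q)$, and a path contributing to $\gamma_{n,k}(q)$ has exactly $k$ down-steps, so $\gamma_{n,k}(q)$ is divisible by $q^k(1+q)^k$. The main obstacle is the first step: pinning down the continued fraction with the correct $q$-exponents, i.e.\ proving that the Foata--Zeilberger encoding sends $(\exc,\inv-\exc)$ jointly to the path statistics (number of up-steps, $2\res+\les$); the combinatorial reading of the $\gamma_{n,k}$ via $\DD_{n,k}$ in the third step is the co-main difficulty. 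Once the fraction $a_{2i-1}=q^{i-1}[i]_q$, $a_{2i}=t\,a_{2i-1}$ is in hand, the symmetric expansion and the divisibility drop out.
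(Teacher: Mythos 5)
Your proposal is correct in outline, and its analytic core is the same as the paper's: your S-fraction with $a_{2i-1}=q^{i-1}[i]_q$, $a_{2i}=t\,a_{2i-1}$ is exactly the continued fraction of Clarke--Steingr\'{\i}msson--Zeng \cite{CSZ97}, which is the specialization $p=q^2$ (note $[n]_{p,q}\big|_{p=q^2}=q^{n-1}[n]_q$) of the $J$-fraction from \cite{SZ12} that the paper invokes in the proof of Lemma~\ref{lem:a}; your contraction-plus-Jacobi--Rogers extraction of the basis $t^k(1+t)^{n-1-2k}$ is precisely the mechanism of Lemmas~\ref{lem:JR} and~\ref{lem:a}; and your divisibility argument (each down-step weight $tq^{2h-1}[h]_q[h+1]_q$ is divisible by $q(1+q)$ since exactly one of $[h]_q,[h+1]_q$ has even index) is the paper's observation that $(p+q)\mid [n]_{p,q}[n+1]_{p,q}$, read at $p=q^2$ where $(p+q)^k=q^k(1+q)^k$. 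Your "main obstacle" -- transporting $(\exc,\inv-\exc)$ to the path/linear side -- is resolved in the paper exactly as you anticipate: by the Foata--Zeilberger-type bijection $\Phi$ of Lemma~\ref{lem:b} together with the identity $\inv=\drop+\cros+2\nest$ of \eqref{eq:inv} (from \cite{SZ10}) and the equidistribution of $(\ress,\les,\des)$ with $(\res,\les,\des)$, giving $\sum_\sigma t^{\exc\sigma}q^{\inv\sigma-\exc\sigma}=\sum_\sigma t^{\des\sigma}q^{2\res\sigma+\les\sigma}$.

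The genuine divergence, and the one real gap, is your third step. The paper never needs a group action or a new bijection onto $\DD_{n,k}$: it works from the start with the six-variable polynomial $A_n(p,q,t,u,v,w)$ recording $\res,\les,\des$ \emph{together with} $\da^*,\dd^*,\valley^*$, whose continued fraction has level weights $(u+tv)[h+1]_{p,q}$; Jacobi--Rogers then produces coefficients of $(u+tv)^{n-1-2k}(tw)^k$, and the interpretation $a_{n,k}(p,q)=\sum_{\sigma\in\DD_{n,k}}p^{\res\sigma}q^{\les\sigma}$ falls out by the specialization $t=u=1$, $v=0$. Your alternative (a) instead rests on the assertion that $2\res+\les$ is constant on modified Foata--Strehl orbits; this is true (it is an invariance of the kind established by group-action arguments in \cite{LZ14}, and it checks out on small cases, e.g.\ the orbit $\{1234,1243,2341,1342,\dots\}$ all carry $2\res+\les=0$ while the singleton orbits $\{213\}$, $\{312\}$ carry $q^2$ and $q$), but it is a nontrivial lemma that your write-up only names, not proves -- it is the missing ingredient if you take route (a). Your alternative (b) is sound but would in effect re-derive the refined continued fraction of \cite{SZ12}. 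So: the paper's approach concentrates all the bookkeeping in one refined continued fraction and gets the $\DD_{n,k}$ interpretation by pure specialization, whereas your hopping route, once the invariance lemma is supplied, would yield a more self-contained, action-based proof of the $\gamma$-positivity that bypasses Jacobi--Rogers for the interpretation step.
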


\begin{rmk}
The first values of $\gamma_{n,k}(q)$ in Table~\ref{table:gamma_q} are obtained by replacing $p=q^2$ in
\cite[Table of $a_{n,k}(p,q)$ in Appendix]{SZ12}.
\end{rmk}

%

There is a derangement analogue of \eqref{BPconj}.
An element $\sigma\in \SS_n$  is called a \emph{derangement} if it has no fixed point.
Denote by $\D_n$ the set of all derangements in $\SS_n$ and let
$$
\DE_{n,k} := \set{\sigma \in \D_n : \exc \sigma = k \text{ and } \cda (\sigma)=0},
$$
where $\cda(\sigma)$ is
the number of  \emph{cyclic double ascents} (or \emph{double excedances}) of  $\sigma$, i.e.,
 indices $i\in[n]$ such that $i<\sigma(i)<\sigma^2(i)$.
For example, we have $\DE_{4,1} = \set{4123}=\set{(4321)}$ and
$$\DE_{4,2} = \set{2143, 3412, 4321, 4312, 3421}=\set{(21)(43), \, (31)(42),\, (41)(32),\, (4231),\, (4132)}.$$

\begin{thm}\label{Der-A}
For $n\geq 1$
we have
\begin{align}\label{derangeA}
\sum_{\sigma\in \D_n}q^{\inv\sigma}t^{\exc \sigma}
=\sum_{0\leq k\leq {n/2}} \biggl(\sum_{\sigma\in \DE_{n,k}}q^{\inv\sigma}\biggr) t^k (1+t)^{n-2k}.
\end{align}
\end{thm}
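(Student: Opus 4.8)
The plan is to expand the ordinary generating function
\[
D(z):=\sum_{n\ge 0}D_n(q,t)\,z^n,\qquad D_n(q,t):=\sum_{\sigma\in\D_n}q^{\inv\sigma}\,t^{\exc\sigma},
\]
as a Jacobi-type continued fraction and then to read the expansion \eqref{derangeA} off a single change of variables, along the continued-fraction lines announced in the introduction for Theorem~\ref{thm:inv-Eulerian}. It is worth noting first that the \emph{palindromicity} demanded by \eqref{derangeA} is automatic: a derangement has no fixed point, so $\exc\sigma+\drop\sigma=n$, while the inverse map $\sigma\mapsto\sigma^{-1}$ preserves $\inv$ and exchanges $\exc$ with $\drop$; hence $t^nD_n(q,1/t)=D_n(q,t)$, so $D_n(q,t)$ already lies in the span of $\{t^k(1+t)^{n-2k}\}$. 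The genuine content is therefore the \emph{nonnegativity} of the coefficients and their identification with a sum over $\DE_{n,k}$.

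First I would encode derangements by weighted Motzkin paths through the Foata--Zeilberger correspondence (Laguerre histories), the same device underlying Theorem~\ref{thm:inv-Eulerian}; under it $\exc$ becomes the number of up-steps and $q$ records $\inv$, while the absence of fixed points simply deletes the level step at height~$0$. This yields
\[
D(z)=\cfrac{1}{1-b_0z-\cfrac{\lambda_1 z^2}{1-b_1 z-\cfrac{\lambda_2 z^2}{\ddots}}},\qquad b_k=q^{k}[k]_q(1+t),\quad \lambda_k=q^{2k-1}[k]_q^2\,t,
\]
with $b_0=0$; the first coefficients $b_0=0,\ \lambda_1=qt,\ b_1=q(1+t),\ \lambda_2=q^3(1+q)^2t$ are checked directly against $D_2,D_3,D_4$. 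The only structural facts I shall use are that every $b_k$ is $(1+t)$ times a polynomial in $q$ and every $\lambda_k$ is $t$ times a polynomial in $q$.

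Granting this, the symmetric expansion drops out formally. Substituting $w=(1+t)z$ and $s=t/(1+t)^2$ turns $b_kz$ into $q^k[k]_q\,w$ and $\lambda_kz^2$ into $q^{2k-1}[k]_q^2\,s\,w^2$, so the continued fraction becomes a series $G(w,s)=\sum_{m\ge 0}g_m(q,s)\,w^m$ whose coefficients $g_m(q,s)=\sum_k g_{m,k}(q)\,s^k$ are polynomials in $q$ and $s$ free of $t$. Comparing coefficients of $z^n$ in $D(z)=G\!\left((1+t)z,\,t/(1+t)^2\right)$ gives
\[
D_n(q,t)=(1+t)^n\,g_n\!\left(q,\tfrac{t}{(1+t)^2}\right)=\sum_{k}g_{n,k}(q)\,t^k(1+t)^{n-2k}.
\]
Since each $g_{n,k}(q)$ is a $\Z_{\ge 0}$-combination of monomials in $q$ (it is a weighted count of the length-$n$ paths with $k$ up-steps), this already establishes \eqref{derangeA} with $\gamma_{n,k}(q):=g_{n,k}(q)$, a polynomial in $q$ with nonnegative coefficients.

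The remaining and hardest step is the identification $g_{n,k}(q)=\sum_{\sigma\in\DE_{n,k}}q^{\inv\sigma}$. Here the exponent of $s$ equals the number of up-steps, and for the representatives singled out below this is exactly $\exc=k$; thus it is only the base $t^k(1+t)^{n-2k}$ that needs interpreting. The point is that, in the expansion of $b_k=(1+t)\,q^k[k]_q$, the factor $(1+t)$ records at each positive-height level step the binary choice ``cyclic double drop'' ($1$) versus ``cyclic double ascent'' ($t$); selecting the constant term $1$ everywhere picks out precisely the derangements with $\exc=k$ and \emph{no} cyclic double ascent, that is $\DE_{n,k}$. I would therefore restrict the Foata--Zeilberger correspondence to these representatives and verify that the accumulated $q$-weight is exactly $q^{\inv\sigma}$. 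The delicate part is this last bookkeeping: matching the weights $q^k[k]_q$ and $q^{2k-1}[k]_q^2$ to the number of inversions contributed at each step forces a careful use of the crossing/nesting labelling in the Laguerre history. As a check, the continued fraction gives $\gamma_{4,1}(q)=q^3$ and $\gamma_{4,2}(q)=q^2+q^4+2q^5+q^6$, which coincide with $\sum_{\sigma\in\DE_{4,1}}q^{\inv\sigma}$ and $\sum_{\sigma\in\DE_{4,2}}q^{\inv\sigma}$ for $\DE_{4,1}=\{4123\}$ and $\DE_{4,2}=\{2143,3412,4321,4312,3421\}$.
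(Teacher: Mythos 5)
Your overall strategy is essentially the paper's own, so most of what you write is sound: the continued fraction you posit is exactly the specialization $B_n(q^2,q,qt,1,1,1,0)$ of \eqref{eq:A_D} that the authors use (your $b_k=q^k[k]_q(1+t)$ and $\lambda_k=q^{2k-1}[k]_q^2\,t$ agree with $a_h=qt\,q^{h}[h+1]_q$, $b_h=q^{h}[h]_q(1+t)$, $c_h=q^{h-1}[h]_q$ coming from $[h]_{q^2,q}=q^{h-1}[h]_q$), your palindromicity remark via $\sigma\mapsto\sigma^{-1}$ is correct, and your substitution $w=(1+t)z$, $s=t/(1+t)^2$ is a clean way to extract the expansion $D_n(q,t)=\sum_k g_{n,k}(q)\,t^k(1+t)^{n-2k}$ with $g_{n,k}(q)\in\N[q]$.

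There is, however, a genuine gap at the step you yourself call the hardest, and it occurs in two places. First, the continued fraction itself is asserted, not proved: a Foata--Zeilberger-type correspondence does \emph{not} natively record $\inv$; it records crossings and nestings, which is why the weights carry the $(p,q)=(q^2,q)$ structure ($q^{2k-1}[k]_q^2 = q\,[k]_{q^2,q}^2$ after the shift $t\mapsto qt$). Your phrase ``$q$ records $\inv$'' is false as a statement about the correspondence; what the machinery yields is the weight $q^{2\nest\sigma+\cros\sigma}$ together with a factor $q^{\drop\sigma}$ (from $t\mapsto qt$), and only the identity $\inv=\drop+\cros+2\nest$ (equation \eqref{eq:inv}, from \cite{SZ10}) converts this into $q^{\inv\sigma}$; you never state this identity, and your checks at $n\le 4$ do not substitute for it. Second, the identification $g_{n,k}(q)=\sum_{\sigma\in\DE_{n,k}}q^{\inv\sigma}$ is only announced (``I would restrict \dots{} and verify''). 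That the coefficient of $t^k(1+t)^{n-2k}$ equals the generating function over the $\cda$-free representatives, with the correct accumulated $q$-weight, is precisely the content of the paper's Lemma~\ref{lem:B} (Corollary~7 and Theorem~8 of \cite{SZ12}), which rests on a statistic-preserving bijection tracking the sextuple $(\nest,\cros,\drop,\cda,\cdd,\cvalley,\fix)$ simultaneously; one also needs the small observation $n-\cvalley\sigma=\drop\sigma$ on $\D_n$ and $\SS_{n,k,0}=\DE_{n,k}$ to land on \eqref{derangeA}. To complete your argument you should either invoke Lemma~\ref{lem:B} together with \eqref{eq:inv}, as the paper does, or reprove the $y=0$ case of that lemma for your restricted correspondence; as it stands the proposal is a correct plan whose decisive verification is missing.
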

\begin{table}\label{table:inv_DEnk}
\begin{center}
\begin{tabular}{c|ccc}
  n $\setminus$ k& 0& 1 & 2  \\
  \hline
  0&1\\
  1 & 0 &      \\
  2 & 0 & $q$      \\
  3 & 0 & $q^2$ &    \\
  4 & 0 & $q^3$&$q^2+q^4+2q^5+q^6$ \\
\end{tabular}
\end{center}
\caption{The values of $\sum  q^{\inv\sigma}$ for $\sigma\in \DE_{n,k}$ and $1\leq k\leq n\leq 4$}
\end{table}
\begin{rmk} The $t=-1$ case of Theorems~1 and 2 were first proved in \cite[Theorem~3]{SZ10}
as a $q$-analogue of two well-known  results connecting Eulerian polynomials (resp.
derangment polynomials) to tangent (resp. secant) numbers.
\end{rmk}

Note that Athanasiadis and Savvidou~\cite{AS12} used
the $q=1$ case of \eqref{derangeA}  to prove the $\gamma$-positivity
of  the local $h$-polynomials  of certain simplicial divisions.
More recently,  Athanasiadis and Savvidou~\cite{AS13} and
 Athanasiadis~\cite{Ath13} have extended
their  results to  hyperoctahedral group $B_n$ and colored groups, while
Mongelli~\cite{Mon13} studied the excedances in affine Weyl groups.
Motivated by the aforementioned works, we shall study the
 similar expansion formula for two kinds of  derangement Eulerian polynomials in
 $\B_n$ and wreath product $\Z_r \wr \SS_n$.  We shall introduce
 the necessary  definitions and present  the main results for these derangement
 polynomials  in the next section.

\section{Excedances and derangements in  colored permutations}
\subsection{Excedances  in  $\B_n$}
Let $\B_n$ be the set of permutations $\sigma$  of $\{\pm 1,\ldots, \pm n\}$ such that $\sigma(-i)=-\sigma(i)$ for every $i\in [n]$, that is, $\B_n = \Z_2 \wr \SS_n$ with $-i = \bar{i}$.
From Steingr\'{i}msson \cite[Definition 3]{Ste94}, an index $i \in [n]$ is called an \emph{excedance} of $\sigma \in \B_n$ if $$i <_{f} \sigma(i)$$
in the \emph{friends order} $<_f$ of $\{\pm 1,\ldots, \pm n\}$:
$$
1<_{f}-1<_{f}2<_{f}-2<_{f} \cdots <_{f} n <_{f} -n
$$
and $\exc(\sigma)$ is defined by the number of excedances of $\sigma \in \B_n$.
Following Brenti~\cite[page 431]{Bre94}, 
we say that $i \in [n]$ is a $B$-\emph{excedance} of $\sigma$
if $$\sigma(i) < \sigma(\abs{\sigma(i)})\quad \text{or}\quad \sigma(i)=-i$$
in the \emph{natural order} $<$ of $\{\pm 1,\ldots, \pm n\}$:
$$
-n<\cdots <-2<-1<1<2<\cdots <n.
$$
and the number of $B$-excedances of $\sigma$ will be denoted by $\exc_B(\sigma)$.
We also say that $i\in [0,n-1]$ is a \emph{B-descent} of $\sigma$ if
$$\sigma(i) > \sigma(i-1)$$
in the natural order, where $\sigma(0)=0$, and the number of $B$-descents of $\sigma$ will be denoted by $\des_B(\sigma)$.
From \cite[Theorem 20]{Ste94} and \cite[eq.(14)]{Bre94} it follows that
\begin{align}
\sum_{n\ge 0} \sum_{\sigma\in \B_n}t^{\exc(\sigma)}  \frac{z^n}{n!}
=\sum_{n\ge 0} \sum_{\sigma\in \B_n}t^{\des_B(\sigma)}  \frac{z^n}{n!}
=\sum_{n\ge 0} \sum_{\sigma\in \B_n}t^{\exc_B(\sigma)} \frac{z^n}{n!}
=\frac{(1-t) e^{z(1-t)}}{1-te^{2z(1-t)}},
\label{eq:equiv}
\end{align}
i.e., the two statistics $\exc$ and $\exc_B$ are equidistributed on $\B_n$.
In 2004, Fire \cite[Definition 6.18]{Fir04} introduced the \emph{flag excedance} statistic $\fexc$ of an element $\sigma\in \B_n$ by
\begin{align*}
\fexc(\sigma)
&=\#\set{i\in \{\pm 1,\ldots, \pm n\}\,:\, i <_c \sigma(i)}\\
&=2 \cdot \#\set{i\in [n] \,:\, i < \sigma(i)}+\#\{i\in [n]: \sigma(i)<0\},
\end{align*}
where we use the \emph{color order} $<_{c}$ of $\{\pm 1,\ldots, \pm n\}$:
$$
-1<_{c}-2<_{c}\cdots <_{c}-n<_{c}1<_{c}2<_{c}\cdots <_{c}n.
$$

\subsection{Excedances in wreath product $\Z_r \wr \SS_n$}\label{subsec:wreath}
The wreath product $\Z_r \wr \SS_n$ is the set of \emph{colored permutations} ${\pi \choose \z}$,
where $\pi = \pi_1 \dots \pi_n \in \SS_n$ and $\z = (\z_1, \dots, \z_n) \in [0,r-1]^n$.
The number $z_i$ will be considered as the color assigned to $\pi_i$.
In this generalized symmetric group $\Z_r \wr \SS_n$, the product is defined by
$${\tau \choose \w}{\pi \choose \z} = {\tau \pi \choose \pi(\w) + \z},$$
where the composition $\tau\pi = \tau \circ \pi$ in $\SS_n$ is composed from right to left,
$$\pi(\w) = (\w_{\pi_1},\w_{\pi_2},\dots, \w_{\pi_n}),$$
and the addition is summed modulo $r$ in individual coordinate.

From now, for $\sigma = \sigma(1) \dots \sigma(n) = {\pi_1 \dots \pi_n \choose \z_1 \dots \z_n} \in \Z_r \wr \SS_n$, we write
\begin{align}
\text{$\sigma(i) = {\pi_i \choose  \z_i}$ as ${\pi_i}^{[\z_i]}$,}
\label{eq:sigma_i}
\end{align}
while $\pi_i$ (resp. $\z_i$) is called the \emph{number} (resp. the \emph{color}) of ${\pi_i}^{[\z_i]}$, denote by
$$\pi_i = \abs{\pi_i^{[\z_i]}} \text{ and } \z_i=\cnum({\pi_i}^{[\z_i]}).$$
Conventionally, we use also the window notation $i = i^{[0]}$, $\bar{i} = i^{[1]}$, and $\bar{\bar{i}} = i^{[2]}$.
For an example
$$\sigma = {4~7~2~5~1~6~3 \choose 0~1~0~1~2~0~0} = 4~\bar{7}~2~\bar{5}~\bar{\bar{1}}~6~3 \in \Z_3 \wr \SS_7,$$
we have ${1 \choose 2} = 1^{[2]} = \bar{\bar{1}}$, ${7 \choose 1} = 7^{[1]} = \bar{7}$, $\abs{\bar{5}} = 5$, and $\cnum(\bar{5}) =1$.
So the wreath product $\Z_r \wr \SS_n$ could be considered the set of $r$-colored permutations $\sigma$ of the alphabet $\Sigma$ of $rn$ letters
$$\Sigma := \set{1,\bar{1},\bar{\bar{1}},\dots, 1^{[r-1]},2,\bar{2},\bar{\bar{2}},\dots, 2^{[r-1]}, \dots, n,\bar{n},\bar{\bar{n}},\dots, n^{[r-1]}},$$
such that $\sigma(\bar{i}) = \overline{\sigma(i)}$ for all $i \in \Sigma$.
From Steingr\'{i}msson \cite{Ste94}, an index $i \in [n]$ is called an \emph{excedance} of $\sigma \in \Z_r \wr \SS_n$ if $i <_{f} \sigma(i)$
in the \emph{friends order} $<_f$ of $\Sigma$:
$$
1<_{f}\bar{1}<_{f} \dots <_{f}1^{[r-1]}<_{f}
2<_{f}\bar{2}<_{f} \dots <_{f}2^{[r-1]}<_{f}
\dots<_{f}
n<_{f}\bar{n}<_{f} \dots <_{f}n^{[r-1]}.
$$
Let $\exc(\sigma)$ be the number of excedances of $\sigma \in \Z_r \wr \SS_n$.
Bagno and Garber \cite[Definition 3.4]{BG06} define the \emph{flag excedance} statistic $\fexc$ of an element $\sigma\in \Z_r \wr \SS_n$ by
\begin{align*}
\fexc(\sigma)
&=\#\set{i\in \Sigma \,:\, i <_c \sigma(i)}\\
&=r \cdot \#\set{i\in [n] \,:\, i <_c \sigma(i) } + \sum_{i\in [n]} \cnum({\sigma(i)}),
\end{align*}
where we use the \emph{color order} $<_{c}$ of $\Sigma$:
$$
1^{[r-1]}<_{c}2^{[r-1]}<_{c} \dots <_{c}n^{[r-1]} <_{c}
\dots<_{c}
\bar{1}<_{c}\bar{2}<_{c} \dots <_{c}\bar{n} <_{c}
1<_{c}2<_{c} \dots <_{c}n.
$$


\subsection{Derangements in $\Z_r \wr \SS_n$}
An element $\sigma\in \Z_r \wr \SS_n$  is called a \emph{derangement} if it has no fixed point, that is, $\sigma(i)\neq i$ for all $i\in\Sigma$.
Denote by $\D_n^{(r)}$ the set of all derangements in $\Z_r \wr \SS_n$.
The two corresponding derangement Eulerian polynomials on $\D_n^{(r)}$ are defined by
\begin{align}
D_n^{(r)}(t)=\sum_{\sigma\in \D_n^{(r)}}t^{\fexc(\sigma)} = \sum_{k\ge 0} D_{n,k}^{(r)} t^{k} \label{def:BD}
\intertext{and}
d_n^{(r)}(t)=\sum_{\sigma\in \D_n^{(r)}}t^{\exc(\sigma)} = \sum_{k\ge 0} d_{n,k}^{(r)} t^{k}. \label{def:Bd}
\end{align}
By convention we set $D_0^{(2)}(t)=d_0^{(2)}(t)=1$.
The first values of $D_n^{(2)}(t)$ and $d_n^{(2)}(t)$ are as follows:
\begin{align*}
D_1^{(2)}(t)&=t,                                   &d_1^{(2)}(t)&=t,\\
D_2^{(2)}(t)&=t+3t^2+t^3,                          &d_2^{(2)}(t)&=4t+t^2,\\
D_3^{(2)}(t)&=t+7t^2+13t^3+7t^4+t^5,               &d_3^{(2)}(t)&=8t+20t^2+t^3,\\
D_4^{(2)}(t)&=t+15t^2+57t^3+87t^4+57t^5+15t^6+t^7, &d_4^{(2)}(t)&=16t+144 t^2+ 72 t^3 +t^4.
\end{align*}

The polynomials $d_n^{(2)}(t)$ were first studied by Chen et al.~\cite{CTZ09} and Chow~\cite{Cho09}. In particular,  the polynomials  $d_n^{(2)}(t)$ have only real roots, see \cite{Cho09}.
Recently Mongelli~\cite{Mon13} has proved the symmetry $D_{n,j}^{(2)} = D_{n,2n-j}^{(2)}$ for all $0\le j \le n $ and conjectured its unimodality. Note  that
 the polynomials $D_3^{(2)}(t)$ has non-real complex roots.
We will prove  similar expansions for the derangement Eulerian polynomials $D_n^{(r)}(t)$ and $d_n^{(r)}(t)$ when  $r\ge 1$. In particular,
we prove the $\gamma$-positivity of $D_n^{(2)}(t)$, which implies both  symmetry and unimodality of its coefficients, see \eqref{derangeB}.

\subsection{Main results in $\Z_r \wr \SS_n$}
For any  integer $n\geq 0$  define   $[n]_{q}=\frac{1-q^n}{1-q}$ as its  $q$-analogue.
 Let $\gamma_{n,i,j}$ be the number of permutations in $\SS_n$ with exactly $i$ fixed-points, $j$ excedances and without double excedance, i.e.,
 $$
 \gamma_{n,i,j}=|\{\sigma\in \SS_n\,:\,\fix\sigma=i,\; \exc\sigma=j, \; \cda\sigma=0 \}|.
 $$
\begin{thm}\label{thm:DerangeR}
 For $r \ge 1$, we have  $\gamma_{n,i,j} > 0$ for $1\le i+2j \le n$,
\begin{align}
D_n^{(r)}(t)&=\sum_{1\le i+2j\le n} \gamma_{n,i,j} \, t^{i+j} (1+t)^{n-i-2j} ([r-1]_t)^i
([r]_t)^{n-i}, \label{derangeR}
\intertext{and}
d_n^{(r)}(t)&=\sum_{1\le i+2j\le n} \gamma_{n,i,j} \, t^{i+j} (1+t)^{n-i-2j} (r-1)^i r^{n-i}.
\end{align}
\end{thm}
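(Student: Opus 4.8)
The plan is to derive both expansion formulae for $D_n^{(r)}(t)$ and $d_n^{(r)}(t)$ from a single master identity obtained by decomposing an arbitrary colored permutation $\sigma \in \Z_r \wr \SS_n$ according to its underlying uncolored permutation in $\SS_n$, together with a careful accounting of how colors contribute to the flag excedance statistic. The key structural fact to exploit is the definition
$$
\fexc(\sigma) = r\cdot \#\set{i\in[n] : i <_c \sigma(i)} + \sum_{i\in[n]}\cnum(\sigma(i)),
$$
which separates the color-independent ``shape'' contribution from the sum of colors. The crucial observation is that the derangement condition on $\sigma \in \Z_r \wr \SS_n$ interacts with the colors in a controlled way: a position $i$ is a fixed point of $\sigma$ precisely when $\pi_i = i$ \emph{and} $\z_i = 0$, so requiring $\sigma$ to be a derangement forces any position at which $\pi_i = i$ to carry a nonzero color.

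\medskip\noindent\textbf{Main steps.} First I would establish a generating-function or continued-fraction master formula for $D_n^{(r)}(t)$ by sorting the sum over $\D_n^{(r)}$ according to the underlying permutation $\pi \in \SS_n$. For each $\pi$, I would count the ways to assign colors $\z \in [0,r-1]^n$ so that the result is a derangement, weighting each by $t^{\fexc(\sigma)}$. Here the two types of positions behave differently: at a fixed point $i$ of $\pi$ one must choose $\z_i \in [1,r-1]$, each contributing a factor $t^{\cnum}$ (since a colored fixed point is a $<_c$-excedance in a way that must be tracked through the color order), which sums to $[r-1]_t = t+\cdots+t^{r-1}$; at a non-fixed position the color ranges freely over $[0,r-1]$, contributing $[r]_t = 1+t+\cdots+t^{r-1}$. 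This is exactly where the factors $([r-1]_t)^i$ and $([r]_t)^{n-i}$ in \eqref{derangeR} originate, with $i = \fix\pi$ and $n-i$ the number of non-fixed positions. After extracting these color factors, the remaining sum over $\pi \in \SS_n$ with fixed $(\fix\pi, \exc\pi) = (i,j)$ is governed by an Eulerian-type $\gamma$-expansion in the shape variable. I would then invoke the Foata--Schützenberger style expansion \eqref{eq:peaka}, refined to track fixed points and the double-excedance condition $\cda\sigma = 0$, so that the refined count produces precisely $\gamma_{n,i,j}$ together with the binomial-like factor $t^{i+j}(1+t)^{n-i-2j}$. The positivity $\gamma_{n,i,j} > 0$ for $1 \le i+2j \le n$ should follow by exhibiting, for each admissible $(i,j)$, at least one permutation in $\SS_n$ with $\fix = i$, $\exc = j$, and no double excedance (for instance, $i$ fixed points together with $j$ disjoint two-cycles of the form $(a\;b)$ with $a<b$, padded appropriately).

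\medskip The formula for $d_n^{(r)}(t)$ I would obtain in parallel, or as a specialization. The only difference is that $d_n^{(r)}(t)$ tracks $\exc(\sigma)$ in the \emph{friends order} rather than $\fexc$, so each color choice contributes no extra power of $t$ beyond the base excedance count: a colored fixed position still must receive a nonzero color (giving the factor $(r-1)^i$ from $r-1$ color choices, each contributing the same power of $t$), and a non-fixed position receives any of $r$ colors (giving $r^{n-i}$). This replaces $[r-1]_t \mapsto (r-1)$ and $[r]_t \mapsto r$ while leaving the shape factor $t^{i+j}(1+t)^{n-i-2j}$ and the same coefficients $\gamma_{n,i,j}$ intact, yielding the second identity. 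Setting $q=1$ and specializing $r$ should recover the known $q=1$ case of \eqref{derangeA} as a consistency check.

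\medskip\noindent\textbf{Main obstacle.} The hardest part will be making rigorous the claim that the color order $<_c$ and the friends order $<_f$ each translate the color assignment into \emph{exactly} the asserted $t$-weights $[r-1]_t, [r]_t$ (resp.\ the cardinality weights $r-1, r$) without disturbing the underlying $\gamma$-expansion of the uncolored part. Concretely, one must verify that the excedance statuses of positions under the relevant order depend on the color data in a way that cleanly factors over positions and aligns the double-excedance condition $\cda = 0$ on $\SS_n$ with the absence of double excedances after coloring. I expect this to require a bijective or weight-preserving argument—most naturally phrased through the combinatorial theory of continued fractions and weighted Motzkin or Laguerre paths already used in the paper—so that the $\gamma$-positivity of the symmetric-group layer and the multiplicative color factors can be combined into the stated product form.
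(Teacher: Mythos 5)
Your route is genuinely different from the paper's, and in outline it is sound. The paper never sums over the underlying permutation directly: it proves a master continued fraction for $\Z_r\wr\SS_n$ (Lemma~\ref{thm:cfb}, established in \S\ref{sec:proof3} via the colored pignose/Laguerre-history extension of the Foata--Zeilberger bijection), specializes it to the J-fractions \eqref{cfD} and \eqref{cfd} for $D_n^{(r)}(t)$ and $d_n^{(r)}(t)$, and then matches these coefficientwise with the J-fraction \eqref{cf4} for $B_n(1,1,1,0,1,w,y)=\sum_{i,j}\gamma_{n,i,j}w^jy^i$ under the substitutions $w=t/(1+t)^2$, $y=t[r-1]_t/\bigl((1+t)[r]_t\bigr)$ and $z\mapsto(1+t)[r]_t\,z$. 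You instead factor the color choices position by position over the underlying $\pi\in\SS_n$ and reduce to the fixed-point-refined $\gamma$-expansion $\sum_{\fix\pi=i}t^{\exc\pi}=\sum_j\gamma_{n,i,j}\,t^j(1+t)^{n-i-2j}$, which is available as Lemma~\ref{lem:B} with $p=q=u=v=w=1$ (plus $\sigma\mapsto\sigma^{-1}$ to trade $\drop$ for $\exc$). For this particular theorem your reduction is shorter and bypasses the colored-bijection machinery entirely; the paper's heavier route is what produces the $q$-refinement \eqref{strong:thm} needed elsewhere, which your argument would not give.

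Two concrete repairs are needed. First, the color weights: from $\fexc(\sigma)=r\cdot\#\set{i\,:\,i<_c\sigma(i)}+\csum\sigma$, a position with $\pi_i>i$ and $\z_i=0$ contributes $t^r$ (not $t^0$), and a colored fixed point $i\mapsto i^{[c]}$ satisfies $i^{[c]}<_c i$, so it is \emph{not} a $<_c$-excedance (contrary to your parenthetical) and contributes only $t^{c}$. The correct positionwise color sums are therefore $t^r+t+\cdots+t^{r-1}=t[r]_t$ at excedances of $\pi$, $[r]_t$ at drops, and $t+\cdots+t^{r-1}=t[r-1]_t$ at the (necessarily colored) fixed points; note also $[r-1]_t=1+t+\cdots+t^{r-2}$, not $t+\cdots+t^{r-1}$ as you wrote. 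These give the clean identity $D_n^{(r)}(t)=\sum_{\pi\in\SS_n}t^{\wex\pi}\,([r-1]_t)^{\fix\pi}([r]_t)^{\,n-\fix\pi}$ with $\wex=\exc+\fix$, and since the factor at each position depends only on its type (excedance, drop, or fixed point), the factorization provably does not disturb the shape sum: grouping by $\fix\pi=i$ and applying the refined expansion yields exactly $t^{i+j}(1+t)^{n-i-2j}$. Your flagged ``main obstacle'' thus dissolves into this short positionwise computation, and the $d_n^{(r)}$ case is identical with weights $rt$, $r$, $(r-1)t$, since in the friends order $\exc(\sigma)=\exc\pi+\#\set{i\,:\,\pi_i=i,\ \z_i>0}=\wex\pi$ on derangements.

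Second, your positivity witness fails: after placing $i$ fixed points and $j$ transpositions, the remaining $n-i-2j$ letters would have to carry a permutation with no fixed point and no excedance, which does not exist (the minimum of any nontrivial cycle is an excedance). The repair for $j\ge1$ is to take $i$ fixed points, $j-1$ transpositions, and one decreasing cycle $(c_k\,c_{k-1}\cdots c_1)$ of length $k=n-i-2j+2\ge2$, which has exactly one excedance and no double excedance. The same obstruction shows $\gamma_{n,i,0}=0$ whenever $i<n$, so the positivity assertion holds as stated only for $j\ge1$ or $(i,j)=(n,0)$; since the paper's own proof is silent on positivity, your corrected construction would actually supply a point the paper omits.
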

The special $r=1$ case of \eqref{derangeR} corresponds to the $q=1$ case of \eqref{derangeA}. We derive immediately from Theorem~\ref{thm:DerangeR} the following result.\begin{cor}\label{cor:property}
For all $r\ge 1$ and $n\ge 1$, the polynomial $D_n^{(r)}(t)$ is strictly unimodal and symmetric, namely, the coefficients $D_{n,k}^{(r)}$ in \eqref{def:BD} satisfy
\begin{align}
0 \le D_{n,0}^{(r)} < D_{n,1}^{(r)} < \dots <D_{n,\lfloor rn/2 \rfloor}^{(r)}.
\end{align}
and $D_{n,j}^{(r)} = D_{n,rn-j}^{(r)}$ for all $0\le j \le \lfloor rn/2 \rfloor$.
\end{cor}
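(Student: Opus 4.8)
The plan is to read off the symmetry and unimodality of $D_n^{(r)}(t)$ directly from the expansion \eqref{derangeR}, by treating each summand as a product of palindromic unimodal polynomials sharing the common centre of symmetry $rn/2$, and then to upgrade unimodality to \emph{strict} unimodality by isolating a single summand that is strictly unimodal. Throughout write $S_{i,j}(t)=t^{i+j}(1+t)^{n-i-2j}([r-1]_t)^i([r]_t)^{n-i}$, so that Theorem~\ref{thm:DerangeR} reads $D_n^{(r)}(t)=\sum_{1\le i+2j\le n}\gamma_{n,i,j}\,S_{i,j}(t)$ with all $\gamma_{n,i,j}\ge 0$.

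First I would establish the symmetry. Each factor of $S_{i,j}$ is a reciprocal (palindromic) polynomial: the monomial $t^{i+j}$ has centre $i+j$, the binomial power $(1+t)^{n-i-2j}$ has centre $(n-i-2j)/2$, and the powers $([r-1]_t)^i$ and $([r]_t)^{n-i}$ of the palindromes $[r-1]_t$ and $[r]_t$ have centres $(r-2)i/2$ and $(r-1)(n-i)/2$. A product of reciprocal polynomials is reciprocal with centre equal to the sum of the centres, and the elementary identity $(i+j)+\tfrac{n-i-2j}{2}+\tfrac{(r-2)i}{2}+\tfrac{(r-1)(n-i)}{2}=\tfrac{rn}{2}$ shows every $S_{i,j}$, hence $D_n^{(r)}(t)$, satisfies $t^{rn}D_n^{(r)}(1/t)=D_n^{(r)}(t)$, which is exactly $D_{n,j}^{(r)}=D_{n,rn-j}^{(r)}$.

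Next I would prove unimodality using the standard fact that a product of palindromic polynomials with nonnegative coefficients, each unimodal, is again palindromic and unimodal; this follows by writing each factor as a nonnegative combination of symmetric ``boxes'' $t^{a}+\cdots+t^{2c-a}$, whose pairwise products are symmetric trapezoids. The monomial $t^{i+j}$, the binomial power $(1+t)^{n-i-2j}$, and the powers of the flat palindromes $[r-1]_t,[r]_t$ are each palindromic and unimodal, so each $S_{i,j}$ is palindromic and unimodal about $rn/2$, that is, its coefficient sequence is nondecreasing on $[0,\lfloor rn/2\rfloor]$. Summing against the nonnegative weights $\gamma_{n,i,j}$ preserves this, giving the unimodality of $D_n^{(r)}(t)$.

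The main obstacle is strictness. Since $i+2j\ge 1$ forces $i+j\ge 1$, every summand vanishes at $t^0$, so $D_{n,0}^{(r)}=0$, while for $n\ge 2$ the term $(i,j)=(0,1)$ has $\gamma_{n,0,1}>0$ (e.g.\ $n\,1\,2\cdots(n-1)\in\D_n$ has one excedance and $\cda=0$) and contributes $t$, so $D_{n,1}^{(r)}>0$. The crux is that $S_{0,1}(t)=t(1+t)^{n-2}([r]_t)^n$ is \emph{strictly} unimodal up to its centre: for $n\ge 2$ the convolution $([r]_t)^n$ is strictly increasing up to $rn/2$ minus its lower part (already the twofold convolution of the box $[r]_t$ is a strict tent), and convolving a symmetric strictly-unimodal sequence with the symmetric unimodal nonnegative sequence $t(1+t)^{n-2}$ keeps it strictly increasing up to the centre. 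Granting this, $D_{n,k+1}^{(r)}-D_{n,k}^{(r)}\ge \gamma_{n,0,1}\bigl((S_{0,1})_{k+1}-(S_{0,1})_k\bigr)>0$ for $1\le k<\lfloor rn/2\rfloor$, which together with $D_{n,0}^{(r)}=0<D_{n,1}^{(r)}$ yields the strict chain (the remaining small cases $n\le 1$ being checked by hand). The delicate input I expect to have to justify carefully is precisely this convolution lemma; I would prove it through the box decomposition, observing that strict unimodality of $([r]_t)^n$ forces \emph{all} of its box-coefficients to be positive, so that for every step $k\le rn/2$ some rising trapezoid $B_sB'_{s'}$ with positive weight is still ascending at $k$.
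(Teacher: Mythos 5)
Your first two steps are exactly the paper's proof, which consists in its entirety of the observation that each summand of \eqref{derangeR} is symmetric and unimodal with the common centre $(i+j)+\tfrac{n-i-2j}{2}+\tfrac{(r-2)i}{2}+\tfrac{(r-1)(n-i)}{2}=\tfrac{rn}{2}$; your box-decomposition justification of why products of palindromic unimodal factors are palindromic and unimodal is just the standard expansion the paper leaves implicit. Where you genuinely go beyond the paper is strictness: the paper's one-line proof never addresses the strict inequalities at all, whereas you correctly identify that strictness does not follow from symmetry-plus-unimodality of the summands and propose to extract it from the single summand $S_{0,1}(t)=t(1+t)^{n-2}([r]_t)^n$ with $\gamma_{n,0,1}>0$ for $n\ge 2$ (your witness $n\,1\,2\cdots(n-1)$ is fine). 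That reduction, using that all other summands are weakly increasing up to $\lfloor rn/2\rfloor$, is sound.

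Two concrete problems remain. First, the convolution lemma as you state it --- that convolving a symmetric strictly unimodal sequence with a symmetric unimodal nonnegative sequence preserves strict increase up to the centre --- is false in general: $(1+2t+t^2)\cdot(1+t+t^2+t^3+t^4)$ has coefficients $1,3,4,4,4,3,1$, which is flat at the step $2\to 3$ even though $3$ is the centre. In your counterexample-proof box language, the strict factor contributes boxes at every left endpoint, but the flat factor is a single long box, and the trapezoids' ascending ranges fail to cover the step just below the centre. Your application survives only because \emph{both} factors of $S_{0,1}$ are strictly unimodal ($(1+t)^{n-2}$ via binomial coefficients, $([r]_t)^n$ for $n\ge 2$), so both box decompositions have positive coefficients at every admissible left endpoint and the covering argument does go through; you must state and use strictness of both factors, not one. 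Second, your plan to "check the remaining small cases $n\le 1$ by hand" cannot succeed, because the corollary as stated is actually false at $n=1$: there $D_1^{(r)}(t)=t\,[r-1]_t=t+t^2+\cdots+t^{r-1}$, so for $r\ge 4$ one has $D_{1,1}^{(r)}=D_{1,2}^{(r)}=1$ with $2\le\lfloor r/2\rfloor$, violating the strict chain. This is a defect of the paper's statement (its proof, which never argues strictness, does not notice it), but your write-up should flag it and restrict the strictness claim to $n\ge 2$, keeping only weak unimodality and symmetry at $n=1$.
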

\begin{proof}
Note that  each summand  at the right-hand side of \eqref{derangeR} is unimodal and symmetric with the
same center of symmetry at
$$(i+j)+\frac{n-i-2j}{2}+\frac{(r-2)i}{2}+\frac{(r-1)(n-i)}{2} = \frac{rn}{2}.\qedhere$$
\end{proof}

Let $\gamma_{n,k}^{(2)}$ be  the number of permutations in $\SS_n$
with exactly $k$ weak excedances and without double excedance, i.e.,
$\gamma_{n,k}^{(2)} = \sum_{i+j=k} \gamma_{n,i,j}.$
For example, the  permutations (written in product of disjoint cycles)
in $\SS_4$ with exactly 2 weak excedances and without double excedance are
as follows:
$$ (1)(432),~ (2)(431),~ (3)(421),~ (4)(432),~ (21)(43),~ (31)(42),~ (32)(41),~ (4132),~ (4231). $$
Thus $\gamma_{4,2}^{(2)}=9$.

For $r=2$, Theorem~\ref{thm:DerangeR} reduces to the following result, of which \eqref{derangeB}
infers  the $\gamma$-positivity of $D_n^{(2)}(t)$.

\begin{cor}\label{thm:DerangeB}
We have  $\gamma_{n,k}^{(2)}>0$ for $1\leq k\leq n$,
\begin{align}
D_n^{(2)}(t)&=\sum_{k=1}^{n} \gamma_{n,k}^{(2)}\, t^{k} (1+t)^{2n-2k},\label{derangeB}
\intertext{and}
d_n^{(2)}(t)&=\sum_{1\le i+2j\le n} \gamma_{n,i,j} \, 2^{n-i}t^{i+j}(1+t)^{n-i-2j}.
\end{align}
\end{cor}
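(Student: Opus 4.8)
The plan is to deduce this corollary directly from Theorem~\ref{thm:DerangeR} by setting $r=2$, since the two expansions here are simply the $r=2$ specializations of \eqref{derangeR} and of the $d_n^{(r)}$-expansion. First I would record the relevant factors at $r=2$: because $[1]_t=\frac{1-t}{1-t}=1$ and $[2]_t=\frac{1-t^2}{1-t}=1+t$, we have $([r-1]_t)^i=1$ and $([r]_t)^{n-i}=(1+t)^{n-i}$, while on the ordinary side $(r-1)^i r^{n-i}=1^i\cdot 2^{n-i}=2^{n-i}$. The identity for $d_n^{(2)}(t)$ is then immediate: substituting $(r-1)^i r^{n-i}=2^{n-i}$ into the $d_n^{(r)}(t)$ expansion of Theorem~\ref{thm:DerangeR} reproduces the stated formula verbatim, with no further manipulation needed.

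The identity \eqref{derangeB} for $D_n^{(2)}(t)$ requires one regrouping step. Substituting the factors above into \eqref{derangeR} gives $D_n^{(2)}(t)=\sum_{1\le i+2j\le n}\gamma_{n,i,j}\,t^{i+j}(1+t)^{n-i-2j}(1+t)^{n-i}$. Merging the two powers of $(1+t)$ produces the exponent $(n-i-2j)+(n-i)=2n-2(i+j)$, which depends on $i,j$ only through $k:=i+j$. Collecting terms by the value of $k$ and invoking the definition $\gamma_{n,k}^{(2)}=\sum_{i+j=k}\gamma_{n,i,j}$, I obtain $D_n^{(2)}(t)=\sum_k\gamma_{n,k}^{(2)}\,t^k(1+t)^{2n-2k}$, which is \eqref{derangeB}. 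A point worth checking is that extending each inner sum over \emph{all} pairs $(i,j)$ with $i+j=k$ introduces no spurious terms and that $k$ ranges over $1\le k\le n$: indeed $\gamma_{n,i,j}$ vanishes automatically once $i+2j>n$, because in a double-excedance-free permutation the map $\sigma$ carries each excedance position to a distinct drop position (if $p<\sigma(p)$ then $\sigma(p)\ge\sigma^2(p)$, and $\sigma(p)$ cannot be fixed), whence $\exc\le\drop$ and therefore $\fix+2\exc\le n$.

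Finally, for the positivity $\gamma_{n,k}^{(2)}>0$ on $1\le k\le n$, I would note that $\gamma_{n,k}^{(2)}$ is a sum of the nonnegative integers $\gamma_{n,i,j}$ over $i+j=k$, so it suffices to exhibit one strictly positive summand for each $k$. For $k=n$ take $(i,j)=(n,0)$, contributed by the identity, so that $\gamma_{n,n,0}=1$; for $1\le k\le n-1$ take $(i,j)=(k-1,1)$, which satisfies $1\le i+2j=k+1\le n$ and hence gives $\gamma_{n,k-1,1}>0$ by the positivity assertion of Theorem~\ref{thm:DerangeR} (concretely realized by placing $k-1$ fixed points alongside a single decreasing cycle carrying exactly one excedance). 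The algebraic specialization from Theorem~\ref{thm:DerangeR} being otherwise entirely routine, the only real care in the argument lies in this positivity bookkeeping — ensuring that for every $k\in[1,n]$ at least one admissible triple $\gamma_{n,i,j}$ with $i+j=k$ is nonzero, rather than appealing to a single summand that may happen to vanish.
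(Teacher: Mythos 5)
Your proposal is correct and follows essentially the same route as the paper, which presents Corollary~\ref{thm:DerangeB} as the direct $r=2$ specialization of Theorem~\ref{thm:DerangeR} using $[1]_t=1$, $[2]_t=1+t$, and the regrouping of $(1+t)^{n-i-2j}(1+t)^{n-i}$ into $(1+t)^{2n-2k}$ with $k=i+j$. Your explicit positivity bookkeeping (the witnesses $(n,0)$ and $(k-1,1)$, and the vanishing of $\gamma_{n,i,j}$ for $i+2j>n$ via $\exc\le\drop$) merely spells out details the paper leaves implicit, and it is accurate.
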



\begin{table}[t]
\begin{minipage}[l]{.48\textwidth}
\begin{tabular}{c| rrrrrrr}
$n \backslash k$& 0&1&2&3&4&5&6\\
\hline
0&1&&&&&\\
1&0&1&&&&&\\
2&0&1&1&&&&\\
3&0&1&3&1&&&\\
4&0&1&9&6&1&&\\
5&0&1&23&35&10&1&\\
6&0&1&53&184&95&15&1\\
\end{tabular}
\caption{The values of $\gamma_{n,k}^{(2)}$ for $0\leq k\leq n\leq 6$}
\label{table:gamma}
\end{minipage}
\begin{minipage}[r]{.48\textwidth}
\begin{tabular}{c| rrrrrrr}
$n \backslash k$& 0&1&2&3&4&5&6\\
\hline
0&1&&&&&\\
1&0&1&&&&&\\
2&0&1&3&&&&\\
3&0&1&7&11&&&\\
4&0&1&15&54&57&&\\
5&0&1&31&197&458&361&\\
6&0&1&63&648&2551&4379&2763\\
\end{tabular}
\caption{The values of $\hat{\gamma}_{n,k}^{(2)}$ for $0\leq k\leq n\leq 6$}
\label{table:gamma}
\end{minipage}
\end{table}
The first values of $\gamma_{n,k}^{(2)}$ are given in Table~\ref{table:gamma}. For example,
we have
$$D_4^{(2)}(t)=t(1+t)^6 + 9t^2(1+t)^4 + 6t^3(1+t)^2 + t^4.$$

The following result is equivalent to
\cite[Proposition 2.3]{Ath13}, while we will give an alternative
proof  again using the generating functions.
\begin{thm}\label{thm:fexc}
For all $r \ge 1$, the coefficients $D_{n,k}^{(r)}$ and $d_{n,k}^{(r)}$ of the
polynomials $D_{n}^{(r)}(t)$ and $d_{n}^{(r)}(t)$ (cf. \eqref{def:BD} and \eqref{def:Bd})  are connected by $d_{n,0}^{(r)} = D_{n,0}^{(r)}$ and
\begin{align}
d_{n,k}^{(r)} = \sum_{j=0}^{r-1} D_{n,rk-j}^{(r)}\quad \text{for all}\quad
 1 \le k \le n.\label{eq:fexc}
\end{align}
\end{thm}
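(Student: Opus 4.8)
The plan is to prove the relation \eqref{eq:fexc} between the coefficients $d_{n,k}^{(r)}$ and $D_{n,k}^{(r)}$ directly from the definitions of the two flag-excedance-type statistics, by analyzing how $\fexc$ decomposes into an excedance part and a color part. Recall from the definitions that, for $\sigma \in \Z_r \wr \SS_n$,
$$
\fexc(\sigma) = r\cdot \#\set{i\in[n]\,:\, i<_c\sigma(i)} + \sum_{i\in[n]}\cnum(\sigma(i)).
$$
First I would argue that on the set of derangements $\D_n^{(r)}$ the quantity $\#\set{i\in[n]: i<_c\sigma(i)}$ coincides with $\exc(\sigma)$, so that $\fexc(\sigma) = r\cdot\exc(\sigma) + \csum(\sigma)$ where $\csum(\sigma):=\sum_{i\in[n]}\cnum(\sigma(i))$ is the total color. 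The point here is that for a derangement no fixed point can occur, so the friends-order excedance count and the color-order weak-excedance count agree; I would verify that the comparison in the color order $<_c$ against the friends order $<_f$ differs only in the treatment of the elements $i = i^{[0]}$, which a derangement never maps to itself.

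Granting $\fexc(\sigma) = r\cdot\exc(\sigma) + \csum(\sigma)$, the next step is to understand the possible range of $\csum(\sigma)$ once $\exc(\sigma)$ is fixed. The key combinatorial observation is that the total color $\csum(\sigma)$ ranges over $\{0,1,\dots,r-1\}$ modulo the contribution forced by the excedance structure: more precisely, I would show that for each fixed value $k=\exc(\sigma)$ the statistic $\csum$ takes every residue in $\set{0,1,\dots,r-1}$, and that the map $\sigma\mapsto \fexc(\sigma) = rk+\csum(\sigma)$ sweeps out exactly the block of $r$ consecutive integers $\set{rk-(r-1),\dots,rk}$ as $\csum$ runs over its allowed values. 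Summing over $\sigma$ with $\exc(\sigma)=k$ and partitioning by the value $j=\csum(\sigma)\in\set{0,\dots,r-1}$ then yields
$$
d_{n,k}^{(r)} = \sum_{\sigma:\ \exc(\sigma)=k} 1 = \sum_{j=0}^{r-1} \#\set{\sigma\,:\, \fexc(\sigma)=rk-j} = \sum_{j=0}^{r-1} D_{n,rk-j}^{(r)},
$$
which is \eqref{eq:fexc}; the base case $d_{n,0}^{(r)}=D_{n,0}^{(r)}$ follows since $\exc(\sigma)=0$ forces every cycle to contribute and the only contributions to $\fexc$ come from colors, giving the identification of the $k=0$ terms directly.

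Alternatively, and perhaps more cleanly, I would package the argument through generating functions, mirroring the approach announced in the introduction for the $\gamma$-positivity results. One refines the derangement enumerator to a two-variable polynomial tracking $\exc$ and $\csum$ separately, establishes a product/substitution identity expressing $D_n^{(r)}(t)$ in terms of that refinement via $t^{\fexc} = (t^r)^{\exc}\cdot t^{\csum}$, and then reads off the coefficient relation by comparing the coefficient of $t^{rk-j}$ on both sides. The main obstacle I anticipate is the first step: carefully justifying that $\#\set{i: i<_c\sigma(i)}=\exc(\sigma)$ on derangements and controlling precisely which residues $\csum$ realizes for each fixed excedance count, since this is exactly the place where the distinction between $D_n^{(r)}(t)$ and $d_n^{(r)}(t)$ (and the boundary role of the $k=0$ case) lives; once the clean decomposition $\fexc = r\cdot\exc + \csum$ and the range of $\csum$ are nailed down, the coefficient identity \eqref{eq:fexc} is immediate.
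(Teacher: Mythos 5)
Your proof rests on the pointwise decomposition $\fexc(\sigma) = r\cdot\exc(\sigma) + \csum(\sigma)$ on derangements, and this identity is false. In the color order $<_c$ every colored letter lies below every uncolored letter, so a position $i$ with $\cnum(\sigma(i))>0$ never satisfies $i <_c \sigma(i)$ and contributes only its color $\cnum(\sigma(i))\le r-1$ to $\fexc$, whereas in the friends order such a position can perfectly well be an excedance. Concretely, $\sigma = \bar{1}\in\D_1^{(2)}$ has $\exc(\sigma)=1$ (since $1<_f\bar{1}$) but $\fexc(\sigma)=1$, not $2\cdot 1+1=3$; this is consistent with $D_1^{(2)}(t)=d_1^{(2)}(t)=t$. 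The correct pointwise relations, used in the paper's proof of the continued-fraction lemma, are $\fexc = r(\wexa-\fixa)+\csum$ and $\exc = (\wexa-\fixa)+\wexc$, and these do not combine into your identity because the residual statistics $\csum$ and $r\,\wexc$ genuinely differ. Your argument is also internally inconsistent: if $\fexc=r\exc+\csum$ held with $\csum\ge 0$, the fiber $\set{\sigma:\exc(\sigma)=k}$ would produce $\fexc$-values $\ge rk$, yet you assert without justification that it sweeps the block $\set{rk-(r-1),\dots,rk}$ lying at or \emph{below} $rk$. Worse, no fiberwise partition can prove \eqref{eq:fexc} at all: in $\Z_2\wr\SS_2$ the derangement $2\,\bar{1}$ has $\exc=1$ but $\fexc=3$, so it is counted in $d_{2,1}^{(2)}$ on the left but in $D_{2,3}^{(2)}$ (the $k=2$ group) on the right, compensated in the aggregate by $\bar{1}\,\bar{2}$ with $\exc=2$, $\fexc=2$. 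Thus \eqref{eq:fexc} encodes only the \emph{equidistribution} of $\exc$ and $\lceil\fexc/r\rceil$ on $\D_n^{(r)}$, not a refinement of the fibers of $\exc$, and your "cleaner" generating-function packaging via $t^{\fexc}=(t^r)^{\exc}\, t^{\csum}$ rests on the same false identity, so both of your routes collapse at the same point.

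What is actually needed, and what the paper supplies, is a genuinely distributional argument. The paper applies a roots-of-unity filter $\frac{1}{r}\sum_{i=0}^{r-1}\omega^{ij}D_n^{(r)}(\omega^i t)$ to the exponential generating function \eqref{DB} to isolate $\sum_{k} D_{n,rk-j}^{(r)}t^{rk-j}$ for each residue $j$ (eq.\ \eqref{eq:odd}); summing these against $t^j$ yields the generating function \eqref{eq:dn} of $\sum_k D_{n,k}^{(r)}t^{r\lceil k/r\rceil}$, which is then matched with the known generating function \eqref{eq:ctz09} of $d_n^{(r)}(t)$ to conclude $d_n^{(r)}(t)=\sum_{\sigma\in\D_n^{(r)}}t^{\lceil\fexc(\sigma)/r\rceil}$ (eq.\ \eqref{eq:eqfexc}); the theorem then follows by grouping the values $\fexc(\sigma)\in\set{rk-(r-1),\dots,rk}$. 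To salvage your approach you would need either this analytic comparison or an explicit bijection on $\D_n^{(r)}$ transporting $\exc$ to $\lceil\fexc/r\rceil$; the decomposition you propose provides neither.
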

For example, when $n=4$ and $r=k=2$ we have $144=57+87$.
In \cite[Theorem~4.6]{CTZ09}, the so-called \emph{spiral property} of the sequence $\{d_{n,k}^{(2)}\}_{0 \leq k\leq n}$ was proven, namely,
$$0\le d_{n,0}^{(2)}<d_{n,n}^{(2)}<d_{n,1}^{(2)}<d_{n,n-1}^{(2)}<d_{n,2}^{(2)}<d_{n,n-2}^{(2)}<\dots<d_{n,\lceil n/2 \rceil}^{(2)}.$$
The following theorem generalizes the above spiral property to the sequence $\{d_{n,k}^{(r)}\}_{0\leq k\leq n}$ for all $r\ge 2$.
\begin{cor} For all $r\ge 2$ and $n\geq 0$, the polynomial $d_{n}^{(r)}(t)$ has the spiral property, namely,
$$
d_{n,k}^{(r)}<d_{n,n-k}^{(r)}<d_{n,k+1}^{(r)}, \quad \text{for all $0\leq k < \lfloor n/2 \rfloor$},
$$
and $d_{n,\lfloor n/2 \rfloor }^{(r)}<d_{n,\lceil n/2 \rceil}^{(r)}$ if $n$ is odd,
where $d_{n,k}^{(r)}$ is the coefficient of $t^k$ in the polynomial $d_{n}^{(r)}(t)$.
\end{cor}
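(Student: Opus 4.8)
The plan is to reduce the spiral inequalities for $d_n^{(r)}(t)$ to elementary comparisons among the coefficients $D_{n,k}^{(r)}$, and then invoke the strict unimodality and symmetry of $D_n^{(r)}(t)$ proved in Corollary~\ref{cor:property}. The bridge is Theorem~\ref{thm:fexc}: adopting the convention $D_{n,\ell}^{(r)}=0$ for $\ell<0$, the relations $d_{n,0}^{(r)}=D_{n,0}^{(r)}$ and $d_{n,k}^{(r)}=\sum_{j=0}^{r-1}D_{n,rk-j}^{(r)}$ merge into the single formula $d_{n,k}^{(r)}=\sum_{j=0}^{r-1}D_{n,rk-j}^{(r)}$ valid for all $k\ge 0$ (for $k=0$ only the $j=0$ term survives). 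Thus each $d_{n,k}^{(r)}$ is a sum of $r$ consecutive coefficients of $D_n^{(r)}$, indexed by the window $[rk-r+1,\,rk]$. Writing $m:=\lfloor n/2\rfloor$, I first rewrite the reflected coefficient using the symmetry $D_{n,j}^{(r)}=D_{n,rn-j}^{(r)}$: from $d_{n,n-k}^{(r)}=\sum_{j=0}^{r-1}D_{n,rn-rk-j}^{(r)}$, applying the symmetry term by term gives $d_{n,n-k}^{(r)}=\sum_{j=0}^{r-1}D_{n,rk+j}^{(r)}$, i.e.\ the window $[rk,\,rk+r-1]$. Now both $d_{n,k}^{(r)}$ and $d_{n,n-k}^{(r)}$ are sums over adjacent length-$r$ windows sharing the index $rk$.

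For the inequality $d_{n,n-k}^{(r)}<d_{n,k+1}^{(r)}$, the windows of $d_{n,k+1}^{(r)}$ (namely $[rk+1,\,rk+r]$) and of $d_{n,n-k}^{(r)}$ (namely $[rk,\,rk+r-1]$) overlap in $[rk+1,\,rk+r-1]$, so the difference telescopes to $d_{n,k+1}^{(r)}-d_{n,n-k}^{(r)}=D_{n,r(k+1)}^{(r)}-D_{n,rk}^{(r)}$. For $0\le k<m$ one checks $rk<r(k+1)\le rm\le\lfloor rn/2\rfloor$, so both indices lie in the strictly increasing range of $D_n^{(r)}$ and the difference is positive. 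For the inequality $d_{n,k}^{(r)}<d_{n,n-k}^{(r)}$, subtracting the two windows (which share $rk$) gives $d_{n,n-k}^{(r)}-d_{n,k}^{(r)}=\sum_{j=1}^{r-1}\bigl(D_{n,rk+j}^{(r)}-D_{n,rk-j}^{(r)}\bigr)$. For $0\le k\le m-1$ every index satisfies $rk+j\le r(m-1)+(r-1)=rm-1<\lfloor rn/2\rfloor$, so each summand is positive by strict monotonicity (out-of-range indices $rk-j<0$ contribute $D_{n,rk-j}^{(r)}=0$). This covers the first family of inequalities for $0\le k<m$.

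It remains to treat, when $n$ is odd, the final claim $d_{n,m}^{(r)}<d_{n,\lceil n/2\rceil}^{(r)}$. Since $n-m=\lceil n/2\rceil$, this is exactly the identity of the previous paragraph evaluated at the boundary index $k=m$, namely $d_{n,n-m}^{(r)}-d_{n,m}^{(r)}=\sum_{j=1}^{r-1}\bigl(D_{n,rm+j}^{(r)}-D_{n,rm-j}^{(r)}\bigr)$, and I must still show this is positive. This boundary case is the one genuine obstacle: with $n=2m+1$ the center of symmetry is $rn/2=rm+r/2$, and an index $rm+j$ may overshoot the peak $\lfloor rn/2\rfloor$, so the naive ``both indices increasing'' argument no longer applies. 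I would instead compare coefficients through their distance to the center by folding: the folded position of $rm+j$ is $rm+\min(j,\,r-j)$ while that of $rm-j$ is $rm-j$, and since $\min(j,\,r-j)\ge 1>-j$ for $1\le j\le r-1$, the point $rm+j$ is strictly closer to the center than $rm-j$. As $D_n^{(r)}$ is symmetric and strictly increasing up to its peak, being strictly closer to the center forces $D_{n,rm+j}^{(r)}>D_{n,rm-j}^{(r)}$ for each $j$, whence the sum is positive.

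Assembling the three comparisons gives the chain $d_{n,k}^{(r)}<d_{n,n-k}^{(r)}<d_{n,k+1}^{(r)}$ for $0\le k<m$, together with the odd-$n$ endpoint inequality, which is precisely the spiral property. The trivial cases $n=0$ (where $d_0^{(r)}(t)=1$ and the statement is vacuous) and $n=1$ (where $d_1^{(r)}(t)=(r-1)t$, so $d_{1,0}^{(r)}=0<r-1=d_{1,1}^{(r)}$ for $r\ge 2$) are checked by hand.
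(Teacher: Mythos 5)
Your proposal is correct and takes essentially the same route as the paper: both deduce the spiral property by combining Theorem~\ref{thm:fexc} with the strict unimodality and symmetry of $D_n^{(r)}(t)$ from Corollary~\ref{cor:property}, and your telescoped differences $d_{n,k+1}^{(r)}-d_{n,n-k}^{(r)}=D_{n,r(k+1)}^{(r)}-D_{n,rk}^{(r)}$ and $d_{n,n-k}^{(r)}-d_{n,k}^{(r)}=\sum_{j=1}^{r-1}\bigl(D_{n,rk+j}^{(r)}-D_{n,rk-j}^{(r)}\bigr)$ are precisely the increments of the paper's auxiliary sequences $a_{n,k}^{(r)}=D_{n,rk}^{(r)}$ and $b_{n,k}^{(r)}=\sum_{j=1}^{r-1}D_{n,rk-j}^{(r)}$ once their symmetries $a_{n,k}^{(r)}=a_{n,n-k}^{(r)}$ and $b_{n,k}^{(r)}=b_{n,n+1-k}^{(r)}$ are applied. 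If anything, your explicit folding argument at the odd-$n$ endpoint spells out a detail the paper leaves implicit, namely why $b_{n,k}^{(r)}$ increases strictly all the way up to $\lceil n/2\rceil$ even though the indices $rm+j$ may pass the peak of $D_n^{(r)}$.
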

\begin{proof}

For $r\ge 1$ and $n\geq 1$,
letting $a^{(r)}_{n,k} = D_{n,rk}^{(r)}$ and $b^{(r)}_{n,k} =\sum_{j=1}^{r-1} D_{n,rk-j}^{(r)}$,
Corollary~\ref{cor:property} gives that
these two sequences $\{ a^{(r)}_{n,k} \}_{0\le k \le n}$ and $\{ b^{(r)}_{n,k} \}_{1\le k \le n}$ are also strict unimodal and symmetric, that is,
\begin{align}\label{eq:ssu}
&0\le a^{(r)}_{n,0}<a^{(r)}_{n,1}<\cdots <a^{(r)}_{n,\lfloor n/2 \rfloor},&
&0< b^{(r)}_{n,1}<b^{(r)}_{n,2}<\cdots <b^{(r)}_{n,\lceil n/2 \rceil},
\end{align}
and $a^{(r)}_{n,k}=a^{(r)}_{n,n-k}$ for $0\leq k\leq \lfloor n/2 \rfloor$ and $b^{(r)}_{n,k}=b^{(r)}_{n,n+1-k}$ for $1\leq k\leq \lceil n/2 \rceil$.
By Theorem~\ref{thm:fexc}, we have $d_{n,0}^{(r)} = a^{(r)}_{n,0}$ and
\begin{align}
d_{n,k}^{(r)} = a^{(r)}_{n,k} + b^{(r)}_{n,k} \quad \text{for all $1 \le k \le n$}. \label{eq:oe}
\end{align}
Combining \eqref{eq:ssu} and \eqref{eq:oe},
the sequence $\{d_{n,k}^{(r)}\}_{1\leq k\leq n}$ has a spiral property for all $r\ge 2$ as follows.

\end{proof}
\rmk Let $a_n^{(r)}(t) = \sum_{k=0}^n a_{n,k}^{(r)} t^{k}$ and $b_n^{(r)}(t) = \sum_{k=1}^n b_{n,k}^{(r)} t^{k}$ such that $$d_n^{(r)}(t) = a_n^{(r)}(t) + b_n^{(r)}(t).$$
Observing the equation \eqref{eq:odd} in proof of Theorem~\ref{thm:fexc}, we have
\begin{align}
\sum_{n\geq 0} a_{n}^{(r)}(t) \frac{z^{n}}{n!}
&=\frac{e^{(r-1)t z}-t e^{(r-1)z}}{e^{rt z}-te^{rz}},\\
\sum_{n\geq 0} b_{n}^{(r)}(t) \frac{z^{n}}{n!}
&=\frac{t e^{(r-1)z} - t e^{(r-1)tz} }{e^{rt z}-te^{rz}}.
\end{align}

Athanasiadis and Savvidou~\cite{AS13} proved that there are nonnegative integers
 $\xi^+_{n,k}$ and $\xi^-_{n,k}$ satisfying
\begin{align}\label{eq:as}
d_n^{(2)}(t)=\sum_{k\geq 0}\xi^+_{n,k}t^k(1+t)^{n-2k}+\sum_{k\geq 1}\xi^-_{n,k}t^k(1+t)^{n+1-2k}.
\end{align}

We show that the polynomials $D_n^{(2)}(t)$ have a similar expansion as \eqref{eq:as} with the same coefiicients $\xi_{n,k}^+$ and $\xi_{n,k}^- $  and also derive a  combinatorial interpretation from
Theorem~\ref{thm:DerangeB}.
A permutation $\sigma\in S_n$ is called a \emph{drop-colored permutation} if
some of the drops of $\sigma$ are colored by, say, \emph{bar} or \emph{tilde}.
We denote by $\cdrop(\sigma)$ the number of colored drops in $\sigma$.
Denote by $\hat{\gamma}_{n,k}^{(2)}$ the number of  permutations  $\sigma$ in $\ZZ_n$ whose
total number of weak excedances and colored drops is $k$, namely,
$\wex(\sigma)+\cdrop(\sigma)=k$.
Let $\ZZ_n$ be the set of drop-colored permutations without double excedance in $\SS_n$. For example,
the drop-colored permutations (written in product of disjoint cycles) in $\ZZ_4$ whose
total number of weak excedances and colored drops is 2 are:
\begin{gather*}
(1)(4\,3\,2),~ (2)(4\,3\,1),~ (3)(4\,2\,1),~ (4)(3\,2\,1),~ (2\,1)(4\,3),~ (3\,1)(4\,2),~ (4\,1)(3\,2),\\
(4\,2\,3\,1),~(4\,1\,3\,2),~ (\bar{4}\,3\,2\,1),~ (\tilde{4}\,3\,2\,1),~ (4\,\bar{3}\,2\,1),~ (4\,\tilde{3}\,2\,1),~ (4\,3\,\bar{2}\,1),~ (4\,3\,\tilde{2}\,1).
\end{gather*}
Thus $\hat{\gamma}_{4,2}^{(2)}=15$.
The first values of $ \hat{\gamma}_{n,k}^{(2)}$ are given in Table~\ref{table:gamma}.

\begin{thm}\label{thm:analogue-as}
For $0\leq k\leq n$ we have
\begin{align}\label{eq:bb}
\hat{\gamma}_{n,k}^{(2)} = \sum_{i=0}^k \gamma_{n,i}^{(2)} 2^{k-i} {n-i \choose k-i}.
\end{align}
Furthermore the following expansions hold true:
\begin{align}
D_n^{(2)}(t)
&=\sum_{k\geq 0}\hat{\gamma}^{(2)}_{n,k}t^{k}(1+t^2)^{n-k},\label{eq:SZ}\\
d_n^{(2)}(t)
&=\sum_{k\geq 0}\hat{\gamma}^{(2)}_{n,k}t^{\lceil k/2\rceil}(1+t)^{n-k},  \label{eq:AS}
\end{align}
where $\lceil x\rceil$ denotes the least integer great than or equal to $x$.
\end{thm}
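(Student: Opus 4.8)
The plan is to prove the three assertions in turn, reducing the two expansions to the combinatorial identity \eqref{eq:bb} and to the formulas already established in Corollary~\ref{thm:DerangeB} and Theorem~\ref{thm:fexc}.

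First I would establish \eqref{eq:bb} by a direct counting argument. A drop-colored permutation in $\ZZ_n$ counted by $\hat{\gamma}_{n,k}^{(2)}$ is built from an underlying permutation $\sigma \in \SS_n$ with no double excedance by selecting some of its drops and coloring each selected drop by one of the two colors (bar or tilde). If the underlying $\sigma$ has exactly $i$ weak excedances---there are $\gamma_{n,i}^{(2)}$ such permutations---then it has $n-i$ drops, and to reach $\wex(\sigma)+\cdrop(\sigma)=k$ we must color exactly $k-i$ of them. The number of choices is $\binom{n-i}{k-i}2^{k-i}$, and summing over $i$ yields \eqref{eq:bb}; the binomial coefficient vanishes outside $0\le i\le k$, which fixes the range of summation.

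Next I would derive \eqref{eq:SZ} from the expansion $D_n^{(2)}(t)=\sum_i \gamma_{n,i}^{(2)}\,t^i(1+t)^{2(n-i)}$ of Corollary~\ref{thm:DerangeB} together with \eqref{eq:bb}. The key algebraic step is the splitting $(1+t)^2=(1+t^2)+2t$; expanding $((1+t^2)+2t)^{n-i}$ by the binomial theorem, collecting the power $t^{i+m}$ under the new index $k=i+m$, and noting that the exponent of $1+t^2$ is then $n-k$, the coefficient of $t^k(1+t^2)^{n-k}$ becomes precisely $\sum_i \gamma_{n,i}^{(2)}\binom{n-i}{k-i}2^{k-i}=\hat{\gamma}_{n,k}^{(2)}$. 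This is a routine interchange of summation once the binomial identity is in place.

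Finally, for \eqref{eq:AS} I would use Theorem~\ref{thm:fexc} with $r=2$, which gives $d_{n,0}^{(2)}=D_{n,0}^{(2)}$ and $d_{n,k}^{(2)}=D_{n,2k-1}^{(2)}+D_{n,2k}^{(2)}$ for $k\ge 1$. The crucial observation is that this folding of consecutive coefficient pairs is exactly the linear substitution $t^j\mapsto t^{\lceil j/2\rceil}$ applied to $D_n^{(2)}(t)$, since $\lceil(2k-1)/2\rceil=\lceil 2k/2\rceil=k$ while $\lceil 0/2\rceil=0$. Applying this substitution to the already-proved expansion \eqref{eq:SZ} and using $\lceil(k+2m)/2\rceil=\lceil k/2\rceil+m$ sends each monomial $t^{k+2m}$ in $t^k(1+t^2)^{n-k}=\sum_m\binom{n-k}{m}t^{k+2m}$ to $t^{\lceil k/2\rceil+m}$, so the term $\hat{\gamma}_{n,k}^{(2)}t^k(1+t^2)^{n-k}$ becomes $\hat{\gamma}_{n,k}^{(2)}t^{\lceil k/2\rceil}(1+t)^{n-k}$, which is \eqref{eq:AS}. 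The main point requiring care is verifying that Theorem~\ref{thm:fexc} really translates into this clean substitution rule (in particular handling the constant term separately) and that the parity bookkeeping $\lceil(k+2m)/2\rceil=\lceil k/2\rceil+m$ holds for all $k$ and $m$; once this is checked, the rest is a linear-operator computation. I expect this reinterpretation of Theorem~\ref{thm:fexc} as a monomial substitution to be the only genuinely delicate step, the other two parts being elementary counting and a binomial rearrangement.
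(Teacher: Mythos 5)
Your proof is correct and follows essentially the same route as the paper: the same direct counting for \eqref{eq:bb}, the same splitting $(1+t)^{2(n-i)}=\bigl((1+t^2)+2t\bigr)^{n-i}$ with binomial expansion and reindexing for \eqref{eq:SZ}, and the derivation of \eqref{eq:AS} from Theorem~\ref{thm:fexc}. The only difference is that the paper disposes of the last step with a one-line reference, whereas you spell out the substitution $t^j\mapsto t^{\lceil j/2\rceil}$ together with the check $\lceil(k+2m)/2\rceil=\lceil k/2\rceil+m$ --- and this interpretation is in fact already present in the paper's proof of Theorem~\ref{thm:fexc} via \eqref{eq:eqfexc}, which states $d_n^{(r)}(t)=\sum_{\sigma\in\D_n^{(r)}}t^{\lceil\fexc(\sigma)/r\rceil}$, so the ``delicate step'' you flag is exactly the intended argument.
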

For example, we have
\begin{alignat*}{4}
D_4^{(2)}(t)&=t(1+t^2)^3&&+15t^2(1+t^2)^2 &&+54t^3(1+t^2) &&+57 t^4,\\
d_4^{(2)}(t)&=t(1+t)^3  &&+15t(1+t)^2     &&+54t^2(1+t)   &&+57 t^2.
\end{alignat*}
Some remarks are in order:
\begin{enumerate}[(i)]
\item The two different polynomials $D_n^{(2)}(t)$ and $d_n^{(2)}(t)$ have the
same coefficients $\hat{\gamma}^{(2)}_{n,k}$, when they are expanded in the
basis $\set{t^{k}(1+t^2)^{n-k}}$ and $\set{t^{\lceil k/2 \rceil}(1+t)^{n-k}}$, respectively.
It would be interesting to have a combinatorial explanation of this coincidence.
\item Comparing  \eqref{eq:as} and \eqref{eq:AS} we derive that
\begin{align}\label{eq:atha}
\hat{\gamma}_{n,2k}^{(2)} = \xi^+_{n,k} \text{ for $k \ge 0$}
\quad\text{and}\quad
\hat{\gamma}_{n,2k-1}^{(2)} = \xi^-_{n,k}\text{ for $k \ge 1$}.
\end{align}
\item Athanisiadis~\cite{Ath13} has given an apparently different interpretation of the numbers in \eqref{eq:atha} using
linear statistics on permutations. Is it easy to relate these two interpretations?
\end{enumerate}

The rest of this paper is organized as follows.
In Section~\ref{sec:proof1}, after recalling the necessary results in our previous paper~\cite{SZ12} we prove Theorems~\ref{thm:inv-Eulerian} and \ref{Der-A}.
In Section~\ref{sec:proof2} we prove Theorems~\ref{thm:DerangeR}, \ref{thm:fexc}, and \ref{thm:analogue-as}.
Finally, in Section~\ref{sec:proof3}, we prove Lemma~\ref{thm:cfb} after recalling some background for the classical combinatorial theory of continued fractions.

\section{Proof of Theorems~\ref{thm:inv-Eulerian} and \ref{Der-A}}\label{sec:proof1}
\subsection{Preliminaries}
We need some more definitions.
For  $\sigma \in \SS_n$,  let   $\sigma(0)=\sigma(n+1)=0$.  Then any entry $\sigma(i)$ ($i\in [n]$)  can be classified according to one of the four cases:
\begin{itemize}
\item a \emph{peak}  if $\sigma(i-1) < \sigma(i)$ and $\sigma(i) > \sigma(i+1)$;
\item a \emph{valley}  if $\sigma(i-1) > \sigma(i)$ and $\sigma(i) < \sigma(i+1)$;
\item a \emph{double ascent}  if $\sigma(i-1) < \sigma(i)$ and $\sigma(i) < \sigma(i+1)$;
\item a \emph{double descent} if $\sigma(i-1) > \sigma(i)$ and $\sigma(i) > \sigma(i+1)$.
\end{itemize}
Let $\peak^* \sigma$ (resp. $\valley^* \sigma$, $\da^* \sigma$, $\dd^* \sigma$) denote  the number of peaks (resp. valleys, double ascents, double descents) in~$\sigma$. Clearly we have $\peak^* \sigma=\valley^* \sigma+1$.
Now,
 it is easy to see that
 $$
\DD_{n,k}=\{\sigma\in \SS_n: \valley^* \sigma=k \;\; \textrm{and}\;\; \dd^* \sigma=0\}.
 $$
Define the generalized  Eulerian polynomial   by
\begin{align}\label{def:A}
A_n(p,q,t,u,v,w) = \sum_{\sigma\in \SS_n} p^{\res \sigma} q^{\les \sigma}  t^{\des\sigma}u^{\da^*\sigma} v^{\dd^*\sigma}w^{\valley^*\sigma}.
\end{align}

We need the known formula for the so-called Jacobi-Rogers polynomials~\cite{GJ83}.
\begin{lem}\label{lem:JR}
If  the ordinary generating function of the sequence $\{\mu_n\}_n$
has the continued fraction expansion
\begin{align*}
\sum_{n\geq 0}  \mu_n z^n=\cfrac{1}{1-b_0z-\cfrac{\lambda_1 z^2}{1-b_1z-\cfrac{\lambda_2 z^2}{\cdots}}},
\end{align*}
then
\begin{align}\label{JR}
\mu_n&=\sum_{h\geq 0}\sum_{n_0, \ldots, n_{h-1}\geq 1\atop m_0, \ldots, m_h\geq 0} b_0^{m_0}\cdots b_h^{m_h}\lambda_1^{n_{0}}\ldots
\lambda_h^{n_{h-1}} \rho({\mathbf n}, \mathbf{m}),
\end{align}
where
$2(n_0+\cdots +n_{h-1})+(m_0+\cdots +m_h)=n$ and
$$
\rho({\mathbf n}, \mathbf{m})=\prod_{j=0}^{h-1}{n_{j}+n_{j+1}-1\choose n_{j}-1}
\prod_{l=0}^{h}
{m_l+n_l+n_{l-1}-1\choose m_l}
$$
with convention $n_{-1}=1$, $n_{h}=0$, ${p\choose -1}=\delta_{p,-1}$.
\end{lem}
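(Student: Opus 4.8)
The plan is to derive \eqref{JR} from the combinatorial theory of continued fractions recalled in Section~\ref{sec:proof3}, interpreting $\mu_n$ as a weighted path count and then enumerating the paths by a suitable profile. The starting point is the fundamental correspondence (see \cite{GJ83}): if $\sum_{n\ge0}\mu_n z^n$ has the displayed $J$-fraction expansion, then
\begin{align*}
\mu_n=\sum_{P}\ \prod_{i\ge0}b_i^{\,\ell_i(P)}\prod_{i\ge1}\lambda_i^{\,f_i(P)},
\end{align*}
the sum ranging over all Motzkin paths $P$ of length $n$ (with rises weighted $1$), where $\ell_i(P)$ is the number of level steps of $P$ at height $i$ and $f_i(P)$ the number of falls from height $i$ to $i-1$. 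Everything then reduces to sorting these paths into classes and counting each class.

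To each path $P$ I would attach the profile $(h;\mathbf n,\mathbf m)$, where $h$ is the maximal height reached, $n_j$ ($0\le j\le h-1$) is the number of rises from $j$ to $j+1$ --- equivalently the number of falls from $j+1$ to $j$ --- and $m_l=\ell_l(P)$ ($0\le l\le h$). A direct bookkeeping shows that any path with this profile has weight $b_0^{m_0}\cdots b_h^{m_h}\lambda_1^{n_0}\cdots\lambda_h^{n_{h-1}}$ and length $2(n_0+\cdots+n_{h-1})+(m_0+\cdots+m_h)$, and that reaching height $h$ forces $n_0,\dots,n_{h-1}\ge1$ while the $m_l$ are free. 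Matching these monomials and the length with \eqref{JR}, it remains only to show that the number of Motzkin paths with profile $(h;\mathbf n,\mathbf m)$ equals $\rho(\mathbf n,\mathbf m)$; the factorization of $\rho$ into two products reflects two independent stages of construction.

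In the first stage I would count the \emph{skeletons}, the paths using only rises and falls with the prescribed crossing numbers $n_0,\dots,n_{h-1}$. Such a skeleton is equivalent, via the classical excursion-nesting bijection, to an ordered forest in which the maximal runs at height $\ge j+1$ are the nodes at depth $j+1$; thus the forest has $n_0$ roots and exactly $n_j$ nodes at depth $j+1$. Building the forest level by level, the only choice at each step is how the $n_j$ nodes of depth $j+1$ are distributed in order among the $n_{j-1}$ nodes of depth $j$, a weak composition of $n_j$ into $n_{j-1}$ parts; as these choices are independent across levels, the number of skeletons is $\prod_{j=0}^{h-1}\binom{n_j+n_{j+1}-1}{n_j-1}$ with $n_h=0$, which is the first product in $\rho$. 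In the second stage I would insert the level steps into a fixed skeleton: at each height $l$ the skeleton presents exactly $n_{l-1}+n_l$ vertices (the initial vertex at height $0$ being accounted for by the convention $n_{-1}=1$), and distributing the $m_l$ indistinguishable level steps among them gives $\binom{m_l+n_l+n_{l-1}-1}{m_l}$ possibilities by a stars-and-bars count. Multiplying over $l$ yields the second product, and combining the two stages gives $\rho(\mathbf n,\mathbf m)$.

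The main obstacle is the skeleton count of the first stage: one must verify that the excursion-nesting encoding really is a bijection onto ordered forests and that the per-level distributions genuinely decouple, so that the count factors as a product of binomials. Proceeding instead by induction on $h$ via the decomposition into the $n_0$ elementary excursions would require proving the corresponding Vandermonde-type convolution identity for these binomials, which is the same difficulty in analytic guise. By contrast the level-step insertion is routine once the slot count $n_{l-1}+n_l$ is pinned down; here the only care needed is with the boundary conventions $n_{-1}=1$, $n_h=0$ and $\binom{p}{-1}=\delta_{p,-1}$, which I would confirm on the degenerate cases --- the empty path $h=0$ and the top level $l=h$ --- to check that they produce the correct trivial factors.
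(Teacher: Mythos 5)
Your proof is correct and follows essentially the same route as the paper's: the paper states Lemma~\ref{lem:JR} without proof, citing the Jacobi--Rogers formula of \cite{GJ83}, whose standard derivation is precisely your argument --- the weighted Motzkin-path expansion of the $J$-fraction (recalled in the paper as \eqref{eq:fla}), followed by counting paths of a given profile, with the plane-forest/level-composition bijection yielding the first product in $\rho(\mathbf{n},\mathbf{m})$ and a stars-and-bars insertion of the $m_l$ level steps into the $n_{l-1}+n_l$ vertices at height $l$ (the convention $n_{-1}=1$ accounting for the initial vertex) yielding the second. Both stages, including the boundary conventions $n_h=0$ and the degenerate case $h=0$, check out, so there is no gap.
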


The following result was proved  in
 \cite[Theorem 2]{SZ12}. We provide below a new and neat proof by using the Jacobi-Rogers formula.
\begin{lem}\label{lem:a}
We have the expansion formula
\begin{equation}\label{eq:def}
A_n(p,q,t,u,v,w)= \sum_{k=0}^{\floor{(n-1)/2}} a_{n,k}(p,q) (tw)^k (u+vt)^{n-1-2k},
\end{equation}
where
\begin{equation}\label{eq:ank}
a_{n,k}(p,q)=\sum_{\sigma\in \DD_{n,k}} p^{\res \sigma} q^{\les \sigma}.
\end{equation}
and, for all $0\le k \le \floor{(n-1)/2}$, the polynomial $ a_{n,k}(p,q)$ is divisible by
$(p+q)^k$.
\end{lem}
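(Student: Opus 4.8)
The plan is to start from the Jacobi-type continued fraction of the ordinary generating function of $A_n(p,q,t,u,v,w)$ and to read off the expansion directly from the Jacobi--Rogers formula of Lemma~\ref{lem:JR}. The key structural input is that
\[
\sum_{n\ge 1} A_n(p,q,t,u,v,w)\,z^{n-1}
=\cfrac{1}{1-b_0z-\cfrac{\lambda_1 z^2}{1-b_1z-\cfrac{\lambda_2 z^2}{\cdots}}},
\]
where $b_h=(u+vt)\,\beta_h(p,q)$ with $\beta_0=1$, and $\lambda_h=(tw)\,\widehat\lambda_h(p,q)$ with $\widehat\lambda_1=p+q$ and, crucially, each $\widehat\lambda_h(p,q)$ divisible by $(p+q)$. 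A $J$-fraction recording all six statistics $\res,\les,\des,\da^*,\dd^*,\valley^*$ simultaneously arises from the Françon--Viennot encoding of permutations by Laguerre histories (equivalently, it is the expansion behind \cite[Theorem~2]{SZ12}); I would quote it, or re-derive it by that bijection. The pairing $t^{\des}v^{\dd^*}=(vt)^{\dd^*}$ forced by the boundary identity $\des\sigma=\valley^*\sigma+\dd^*\sigma$ explains why level steps should carry weight $u+vt$: a level step at a given height encodes either a double ascent (weight $u$) or a double descent (weight $vt$). The two features to verify are that the $u$- and $vt$-coefficients of each $b_h$ coincide, so that level steps contribute exactly $u+vt$, and that $(p+q)\mid\widehat\lambda_h$ for every $h\ge 1$.

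Granting the continued fraction, I would apply Lemma~\ref{lem:JR} to $\mu_{n-1}=A_n$. A path of length $n-1$ with $\sum_i n_i=k$ down-steps has $\sum_l m_l=n-1-2k$ level steps, so substituting $b_h=(u+vt)\beta_h$ and $\lambda_h=(tw)\widehat\lambda_h$ into \eqref{JR} lets me factor $(u+vt)^{n-1-2k}(tw)^k$ out of the sum. What remains,
\[
a_{n,k}(p,q)=\sum_{\substack{h\ge 0,\ n_0,\dots,n_{h-1}\ge 1,\ m_0,\dots,m_h\ge 0\\ n_0+\cdots+n_{h-1}=k,\ 2k+m_0+\cdots+m_h=n-1}}
\beta_0^{m_0}\cdots\beta_h^{m_h}\,\widehat\lambda_1^{\,n_0}\cdots\widehat\lambda_h^{\,n_{h-1}}\,\rho(\mathbf n,\mathbf m),
\]
is a polynomial in $p,q$ alone, which is exactly the expansion \eqref{eq:def} with $a_{n,k}$ given by this path sum.

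To match this $a_{n,k}$ with the combinatorial definition \eqref{eq:ank}, I would specialise $v=0$ and $u=1$. On one side, \eqref{def:A} then restricts to permutations with $\dd^*\sigma=0$, on which $t^{\des\sigma}w^{\valley^*\sigma}=(tw)^{\valley^*\sigma}$ by the identity $\des\sigma=\valley^*\sigma+\dd^*\sigma$, so that $A_n(p,q,t,1,0,w)=\sum_k\bigl(\sum_{\sigma\in\DD_{n,k}}p^{\res\sigma}q^{\les\sigma}\bigr)(tw)^k$; on the other side the same specialisation of the already-proved expansion gives $\sum_k a_{n,k}(p,q)(tw)^k$. Comparing coefficients of $(tw)^k$ identifies $a_{n,k}(p,q)=\sum_{\sigma\in\DD_{n,k}}p^{\res\sigma}q^{\les\sigma}$.

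The divisibility is then immediate from the displayed path sum: every monomial carries the factor $\widehat\lambda_1^{\,n_0}\cdots\widehat\lambda_h^{\,n_{h-1}}$ of total $\lambda$-degree $n_0+\cdots+n_{h-1}=k$, and since each $\widehat\lambda_h$ is divisible by $(p+q)$ this factor, hence every term, hence $a_{n,k}(p,q)$, is divisible by $(p+q)^k$. I expect the main obstacle to lie entirely in the first step: pinning down the continued fraction and verifying the two structural properties, namely the equality of the $u$- and $vt$-coefficients in each $b_h$ (a symmetry of the Laguerre-history labels between double ascents and double descents) and the divisibility $(p+q)\mid\widehat\lambda_h$. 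Once those are in hand, everything else is bookkeeping with the Jacobi--Rogers formula.
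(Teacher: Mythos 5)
Your proposal is correct and follows essentially the same route as the paper: the continued fraction you posit is exactly \cite[Eq.~(28)]{SZ12}, with $b_h=(u+tv)[h+1]_{p,q}$ and $\lambda_h=tw\,[h]_{p,q}[h+1]_{p,q}$, Lemma~\ref{lem:JR} then factors out $(tw)^k(u+vt)^{n-1-2k}$ exactly as you describe, and the divisibility follows since $(p+q)\mid[h]_{p,q}[h+1]_{p,q}$ for all $h\ge 1$. The only cosmetic difference is your specialization $u=1$, $v=0$ with $t$ kept free (justified by your correct identity $\des\sigma=\valley^*\sigma+\dd^*\sigma$), whereas the paper also sets $t=1$ and compares coefficients of $w^k$; both identifications of $a_{n,k}(p,q)$ are valid.
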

\begin{proof}
By \cite[Eq. (28)]{SZ12} we have
\begin{align*}
\sum_{n\ge1} A_n(p,q,t,u,v,w)x^{n-1}
&=
\dfrac{1}
{1-(u+tv)[1]_{p,q}x-\dfrac{[1]_{p,q}[2]_{p,q}twx^2}
{1-(u+tv)[2]_{p,q}x-\dfrac{[2]_{p,q}[3]_{p,q}twx^2}
{\cdots}}},
\end{align*}
where $[n]_{p,q}=\frac{p^n-q^n}{p-q}$.
By Lemma~\ref{lem:JR}, there are $a_{n,k}(p,q)\in \N[p,q]$ such that
\begin{align}
A_n(p,q,t,u,v,w) = \sum_{k=0}^{\floor{(n-1)/2}} a_{n,k}(p,q) (u+tv)^{n-1-2k} (tw)^k.
\label{eq:interpret1}
\end{align}
Thus
$$
A_n(p,q,1,1,0,w)=\sum_{k\geq 0}a_{n,k}(p,q)w^k,
$$
and we derive the combinatorial interpretation \eqref{eq:ank} for $a_{n,k}(p,q)$
from \eqref{def:A}.
Finally, as
$$
(p+q) ~|~ [n]_{p,q} [n+1]_{p,q}\qquad  \textrm{for all $n\ge 1$},
$$
each  $w$ appears with  a factor $(p+q)$ in the expansion of the continued fraction, so
the polynomial $a_{n,k}(p,q)$   is divisible by  $(p+q)^k$.
\end{proof}

\begin{defn}
For  $\sigma\in \SS_n$,  a  value $x=\sigma(i)$ ($i\in [n]$) is called
\begin{itemize}
\item a \emph{cyclic peak}  if $i=\sigma^{-1}(x)<x$ and $x >\sigma(x)$;
\item a \emph{cyclic valley}  if $i=\sigma^{-1}(x)>x$ and $x<\sigma(x)$;
\item a \emph{double excedance} if $i=\sigma^{-1}(x)<x$ and $x<\sigma(x)$; 
\item a  \emph{double drop} if $i=\sigma^{-1}(x)>x$ and $x>\sigma(x)$;
\item a \emph{fixed point}  if $x=\sigma(x)$.
\end{itemize}
Let $\cpeak \sigma$ (resp. $\cvalley \sigma$, $\cda \sigma$, $\cdd \sigma$, $\fix \sigma$) be the number of cyclic peaks (resp. valleys, double excedances, double drops, fixed points) in $\sigma$.
\end{defn}

For a permutation $\sigma\in \SS_{n}$ the crossing and nesting numbers are defined by
\begin{align}
 \cros\sigma&= \# \set{(i,j)\in [n]\times[n]: (i<j\le\sigma(i)<\sigma(j))\vee(i>j>\sigma(i)>\sigma(j))},\\
\nest\sigma &= \# \set{(i,j)\in [n]\times[n]: (i<j\le\sigma(j)<\sigma(i))\vee(i>j>\sigma(j)>\sigma(i))}.
\end{align}

For example, the diagram of $ \sigma=9 \;3 \;7\;4\;6\;10\;5\;8\;1\;2$ is as follows:
$$
\begin{pgfpicture}{-2.00mm}{-15.00mm}{112.00mm}{17.00mm}
\pgfsetxvec{\pgfpoint{1.00mm}{0mm}}
\pgfsetyvec{\pgfpoint{0mm}{1.00mm}}
\color[rgb]{0,0,0}\pgfsetlinewidth{0.30mm}\pgfsetdash{}{0mm}
\pgfmoveto{\pgfxy(0.00,0.00)}\pgflineto{\pgfxy(110.00,0.00)}\pgfstroke
\pgfcircle[fill]{\pgfxy(10.00,0.00)}{1.00mm}
\pgfcircle[stroke]{\pgfxy(10.00,0.00)}{1.00mm}
\pgfcircle[fill]{\pgfxy(20.00,0.00)}{1.00mm}
\pgfcircle[stroke]{\pgfxy(20.00,0.00)}{1.00mm}
\pgfcircle[fill]{\pgfxy(30.00,0.00)}{1.00mm}
\pgfcircle[stroke]{\pgfxy(30.00,0.00)}{1.00mm}
\pgfcircle[fill]{\pgfxy(40.00,0.00)}{1.00mm}
\pgfcircle[stroke]{\pgfxy(40.00,0.00)}{1.00mm}
\pgfcircle[fill]{\pgfxy(50.00,0.00)}{1.00mm}
\pgfcircle[stroke]{\pgfxy(50.00,0.00)}{1.00mm}
\pgfcircle[fill]{\pgfxy(60.00,0.00)}{1.00mm}
\pgfcircle[stroke]{\pgfxy(60.00,0.00)}{1.00mm}
\pgfcircle[fill]{\pgfxy(70.00,0.00)}{1.00mm}
\pgfcircle[stroke]{\pgfxy(70.00,0.00)}{1.00mm}
\pgfcircle[fill]{\pgfxy(80.00,0.00)}{1.00mm}
\pgfcircle[stroke]{\pgfxy(80.00,0.00)}{1.00mm}
\pgfcircle[fill]{\pgfxy(90.00,0.00)}{1.00mm}
\pgfcircle[stroke]{\pgfxy(90.00,0.00)}{1.00mm}
\pgfcircle[fill]{\pgfxy(100.00,0.00)}{1.00mm}
\pgfcircle[stroke]{\pgfxy(100.00,0.00)}{1.00mm}
\pgfputat{\pgfxy(10.00,-5.00)}{\pgfbox[bottom,left]{\fontsize{11.38}{13.66}\selectfont \makebox[0pt]{1}}}
\pgfputat{\pgfxy(20.00,-5.00)}{\pgfbox[bottom,left]{\fontsize{11.38}{13.66}\selectfont \makebox[0pt]{2}}}
\pgfputat{\pgfxy(30.00,-5.00)}{\pgfbox[bottom,left]{\fontsize{11.38}{13.66}\selectfont \makebox[0pt]{3}}}
\pgfputat{\pgfxy(40.00,-5.00)}{\pgfbox[bottom,left]{\fontsize{11.38}{13.66}\selectfont \makebox[0pt]{4}}}
\pgfputat{\pgfxy(50.00,-5.00)}{\pgfbox[bottom,left]{\fontsize{11.38}{13.66}\selectfont \makebox[0pt]{5}}}
\pgfputat{\pgfxy(60.00,-5.00)}{\pgfbox[bottom,left]{\fontsize{11.38}{13.66}\selectfont \makebox[0pt]{6}}}
\pgfputat{\pgfxy(70.00,-5.00)}{\pgfbox[bottom,left]{\fontsize{11.38}{13.66}\selectfont \makebox[0pt]{7}}}
\pgfputat{\pgfxy(80.00,-5.00)}{\pgfbox[bottom,left]{\fontsize{11.38}{13.66}\selectfont \makebox[0pt]{8}}}
\pgfputat{\pgfxy(90.00,-5.00)}{\pgfbox[bottom,left]{\fontsize{11.38}{13.66}\selectfont \makebox[0pt]{9}}}
\pgfputat{\pgfxy(100.00,-5.00)}{\pgfbox[bottom,left]{\fontsize{11.38}{13.66}\selectfont \makebox[0pt]{10}}}
\pgfmoveto{\pgfxy(10.00,0.00)}\pgfcurveto{\pgfxy(21.08,9.67)}{\pgfxy(35.29,15.00)}{\pgfxy(50.00,15.00)}\pgfcurveto{\pgfxy(64.71,15.00)}{\pgfxy(78.92,9.67)}{\pgfxy(90.00,0.00)}\pgfstroke
\pgfmoveto{\pgfxy(20.00,0.00)}\pgfcurveto{\pgfxy(20.00,2.76)}{\pgfxy(22.24,5.00)}{\pgfxy(25.00,5.00)}\pgfcurveto{\pgfxy(27.76,5.00)}{\pgfxy(30.00,2.76)}{\pgfxy(30.00,0.00)}\pgfstroke
\pgfmoveto{\pgfxy(30.00,0.00)}\pgfcurveto{\pgfxy(34.72,6.30)}{\pgfxy(42.13,10.00)}{\pgfxy(50.00,10.00)}\pgfcurveto{\pgfxy(57.87,10.00)}{\pgfxy(65.28,6.30)}{\pgfxy(70.00,0.00)}\pgfstroke
\pgfmoveto{\pgfxy(50.00,0.00)}\pgfcurveto{\pgfxy(50.00,2.76)}{\pgfxy(52.24,5.00)}{\pgfxy(55.00,5.00)}\pgfcurveto{\pgfxy(57.76,5.00)}{\pgfxy(60.00,2.76)}{\pgfxy(60.00,0.00)}\pgfstroke
\pgfmoveto{\pgfxy(60.00,0.00)}\pgfcurveto{\pgfxy(64.72,6.30)}{\pgfxy(72.13,10.00)}{\pgfxy(80.00,10.00)}\pgfcurveto{\pgfxy(87.87,10.00)}{\pgfxy(95.28,6.30)}{\pgfxy(100.00,0.00)}\pgfstroke
\pgfmoveto{\pgfxy(70.00,0.00)}\pgfcurveto{\pgfxy(68.95,-4.68)}{\pgfxy(64.79,-8.00)}{\pgfxy(60.00,-8.00)}\pgfcurveto{\pgfxy(55.21,-8.00)}{\pgfxy(51.05,-4.68)}{\pgfxy(50.00,0.00)}\pgfstroke
\pgfmoveto{\pgfxy(10.00,0.00)}\pgfcurveto{\pgfxy(21.63,-8.45)}{\pgfxy(35.63,-13.00)}{\pgfxy(50.00,-13.00)}\pgfcurveto{\pgfxy(64.37,-13.00)}{\pgfxy(78.37,-8.45)}{\pgfxy(90.00,0.00)}\pgfstroke
\pgfmoveto{\pgfxy(100.00,0.00)}\pgfcurveto{\pgfxy(88.37,-8.45)}{\pgfxy(74.37,-13.00)}{\pgfxy(60.00,-13.00)}\pgfcurveto{\pgfxy(45.63,-13.00)}{\pgfxy(31.63,-8.45)}{\pgfxy(20.00,0.00)}\pgfstroke
\pgfmoveto{\pgfxy(40.00,0.00)}\pgfcurveto{\pgfxy(38.69,0.37)}{\pgfxy(37.84,1.65)}{\pgfxy(38.00,3.00)}\pgfcurveto{\pgfxy(38.13,4.09)}{\pgfxy(38.94,5.00)}{\pgfxy(40.00,5.00)}\pgfcurveto{\pgfxy(41.06,5.00)}{\pgfxy(41.87,4.09)}{\pgfxy(42.00,3.00)}\pgfcurveto{\pgfxy(42.16,1.65)}{\pgfxy(41.31,0.37)}{\pgfxy(40.00,0.00)}\pgfstroke
\pgfmoveto{\pgfxy(80.00,0.00)}\pgfcurveto{\pgfxy(78.69,0.37)}{\pgfxy(77.84,1.65)}{\pgfxy(78.00,3.00)}\pgfcurveto{\pgfxy(78.13,4.09)}{\pgfxy(78.94,5.00)}{\pgfxy(80.00,5.00)}\pgfcurveto{\pgfxy(81.06,5.00)}{\pgfxy(81.87,4.09)}{\pgfxy(82.00,3.00)}\pgfcurveto{\pgfxy(82.16,1.65)}{\pgfxy(81.31,0.37)}{\pgfxy(80.00,0.00)}\pgfstroke
\end{pgfpicture}%
$$
It easy to see that $\cros\sigma=5$ and $\nest\sigma=10$. Indeed, 5 crossings are $(1,6)$, $(2,3)$, $(3,6)$, $(5,6)$, and $(10,9)$ and 10 nestings are $(1,2)$, $(1,3)$, $(1,4)$, $(1,5)$ $(1,8)$, $(3,4)$, $(3,5)$, $(6,8)$, $(9,7)$, and $(10,7)$.

\begin{defn}
For $\sigma \in \SS_n$, let $\sigma(0)=0$ and $\sigma(n+1)=n+1$.
The corresponding numbers of peaks, valleys, double ascents, and double descents of permutation $\sigma$ are  denoted by
$$
\peak \sigma, \quad \valley \sigma, \quad \da \sigma, \quad \dd \sigma.
$$
Moreover, a  \emph{double ascent}  $\sigma(i)$ of  $\sigma$ ($i \in [n]$)
 is said to be a \emph{foremaximum} if
$\sigma(i)$ is  a \emph{left-to-right maximum} of  $\sigma$, i.e., $\sigma(j)<\sigma(i)$ for all $1\leq i<j$.
Let $\fmax\sigma$ denote  the number of foremaxima of $\sigma$.
\end{defn}

For instance, $\fmax(4\,2\,1\,5\,7\,3\,6\,8)=2$.
Note that   $\peak\sigma=\valley\sigma$ for any $\sigma\in \SS_n$.
Recall the following result in \cite[Theorem 5]{SZ12}.
\begin{lem}\label{lem:b}
There is a bijection $\Phi$ on $\SS_n$ such that  for all $\sigma\in \SS_n$ we have
\begin{multline*}
(\nest , \cros, \defi, \cda , \cdd, \cvalley, \fix)\sigma\\
 \qquad=(\RESS, \LES, \des, \da - \fmax, \dd, \valley, \fmax)\Phi(\sigma).
\end{multline*}
\end{lem}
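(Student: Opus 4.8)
The plan is to factor $\Phi$ through a common combinatorial model and to read the two sides of the identity off the same object. Concretely, I would encode $\SS_n$ in two ways by \emph{Laguerre histories} (labelled Motzkin paths of length $n$): a \emph{value}-reading encoding $\varphi_{\mathrm{FZ}}$ of Foata--Zeilberger type, designed so that it transports the cyclic seven-tuple $(\nest,\cros,\defi,\cda,\cdd,\cvalley,\fix)$ to explicit path statistics, and a \emph{position}-reading encoding $\varphi_{\mathrm{FV}}$ of Fran\c{c}on--Viennot type, designed so that it transports the linear seven-tuple $(\RESS,\LES,\des,\da-\fmax,\dd,\valley,\fmax)$ to the \emph{same} path statistics. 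Once both encodings are shown to be bijections onto one and the same set of histories, reading each statistic off a history in a single fixed way, the composite $\Phi:=\varphi_{\mathrm{FV}}^{-1}\circ\varphi_{\mathrm{FZ}}$ is the required bijection.

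First I would set up $\varphi_{\mathrm{FZ}}$. Processing the values $x=1,2,\dots,n$, the classification of $x$ by the relative order of $\sigma^{-1}(x),x,\sigma(x)$ dictates a Motzkin step: a cyclic valley gives an up step, a cyclic peak a down step, a double excedance and a double drop give two distinguishable level steps, and a fixed point a \emph{special} level step. The height reached records the number of arcs open at $x$, and the two integer labels attached to each step count, respectively, the crossings and the nestings created at $x$; thus $\cros\,\sigma$ and $\nest\,\sigma$ become the two total label sums, while $\cvalley,\cpeak,\cda,\cdd,\fix$ become the numbers of up, down, excedance-type level, drop-type level, and special level steps. In particular $\defi\,\sigma$, the number of drops, is the number of down steps plus drop-type level steps.

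Next I would set up $\varphi_{\mathrm{FV}}$ with the boundary values $\sigma(0)=0$ and $\sigma(n+1)=n+1$. Reading the positions $i=1,\dots,n$, a valley gives an up step, a peak a down step, a double descent a drop-type level step, and a double ascent an excedance-type level step, \emph{except} that a double ascent which is a foremaximum is recorded as a special level step. The labels are now set to count the dashed-pattern occurrences, so that the two label sums become $\LES\,\sigma$ and $\RESS\,\sigma$; consequently $\valley,\dd,\fmax$ become the numbers of up, drop-type level, and special level steps, $\da-\fmax$ the number of excedance-type level steps, and $\des\,\sigma$ the number of down steps plus drop-type level steps.

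Comparing the two readings coordinate by coordinate yields the statement. The hard part will be the two delicate matchings hidden in the phrases above. First, one must check that the crossing/nesting labelling of $\varphi_{\mathrm{FZ}}$ and the dashed-pattern labelling of $\varphi_{\mathrm{FV}}$ range over exactly the same admissible values at each height and for each step type, so that the two families of histories genuinely coincide; this is where the asymmetric pairing $\cros\leftrightarrow\LES$ and $\nest\leftrightarrow\RESS$ must be pinned down. Second, and most subtly, one must prove that the \emph{special} level steps, which on the value side are exactly the fixed points (a fixed point is a self-loop, hence a level step carrying the empty/extremal label), are on the position side exactly the foremaxima; equivalently, a double ascent $\sigma(i)$ is a left-to-right maximum precisely when its level step carries that same extremal label. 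This identification is what simultaneously pairs $\fix$ with $\fmax$ and forces the correction term $\da-\fmax$ for the double-excedance statistic.
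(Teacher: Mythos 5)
Your proposal is correct and is essentially the paper's own route: the paper does not reprove this lemma but cites \cite[Theorem~5]{SZ12}, where $\Phi$ is constructed exactly as you propose, namely as $\varphi_{\mathrm{FV}}^{-1}\circ\varphi_{\mathrm{FZ}}$ through Laguerre histories, with the Foata--Zeilberger side carrying $(\nest,\cros)$ as step labels and the Fran\c{c}on--Viennot side carrying $(\RESS,\LES)$, and with fixed points matched to foremaxima via the extremal-label level steps. The two delicate points you single out (coincidence of the admissible label ranges at each height, and the identification of special level steps with foremaxima, which forces the correction $\da-\fmax$) are precisely the crux of that proof, so nothing essential is missing.
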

 Consider the  enumerative  polynomial
\begin{align}
B_n(p,q,t,u,v,w,y)=
\sum_{\sigma\in \SS_n} p^{\nest \sigma} q^{\cros \sigma} t^{\defi\sigma} u^{\cda\sigma} v^{\cdd\sigma} w^{\cvalley\sigma}y^{\fix\sigma}.\label{eq:defB}
\end{align}
By \cite[(34)]{SZ12} the ordinary generating function of $B_{n}(p,q,t,u,v,w,y)$ has
the following continued fraction expansion:
\begin{equation}\label{eq:A_D}
 \sum_{n\ge 0} B_n(p,q,t,u,v,w,y)z^n=
\dfrac{1}{
1 - b_0 z - \dfrac{a_0 c_1 z^2}{
1 - b_1 z - \dfrac{a_1 c_2 z^2}{
\cdots}}},
\end{equation}
where $a_h = tw[h+1]_{p,q}$, $b_h=yp^h+(qu+tv)[h]_{p,q}$, and $c_h=[h]_{p,q}$.

Let $\SS_{n}^*$ be the subset of $\SS_n$ consisting of permutations
of which each double ascent is also a foremaxima, and
\begin{align}
\SS_{n,k,j}   &= \set{ \sigma\in \SS_{n} : {\cvalley\sigma}=k, \fix\sigma=j, \cda\sigma=0},\label{eq:Snkj}\\
\SS_{n,k,j}^* &= \set{ \sigma\in \SS_{n}^* : {\valley\sigma}=k, \da\sigma=j }.
\end{align}

The following  result  is derived by combining   Corollary~7 and Theorem~8
in \cite{SZ12}.
\begin{lem}\label{lem:B}
We have
\begin{align*}
B_n(p,q,t,u,v,w,y)
=  \sum_{j=0}^{n} y^{j}\sum_{k=0}^{\floor{(n-j)/2}} b_{n,k,j}(p,q) (tw)^k (qu+tv)^{n-j-2k},
\end{align*}
where the coefficient $b_{n,k,j}(p,q)$ is given by
\begin{align*}
b_{n,k,j}(p,q)=\sum_{\sigma\in \SS_{n,k,j}} p^{\nest \sigma} q^{\cros \sigma}=\sum_{\sigma\in \SS_{n,k,j}^*} p^{\231 \sigma} q^{\312 \sigma}.
\end{align*}
\end{lem}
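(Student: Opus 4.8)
The plan is to read off both the claimed expansion and the two combinatorial interpretations directly from the continued fraction \eqref{eq:A_D}, the Jacobi--Rogers formula \eqref{JR}, and the bijection $\Phi$ of Lemma~\ref{lem:b}. First I would record the data of \eqref{eq:A_D} in the notation of Lemma~\ref{lem:JR}. Its partial numerators are $\lambda_h z^2 = a_{h-1}c_h z^2$, so that
$$\lambda_h = tw\,([h]_{p,q})^2, \qquad b_h = y\,p^h + (qu+tv)[h]_{p,q}.$$
Hence every factor $\lambda_h^{n_{h-1}}$ appearing in \eqref{JR} carries exactly one power of $tw$, and summing over levels the total power of $tw$ equals $k := n_0+\dots+n_{h-1}$, the number of cyclic valleys. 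This is the same mechanism already exploited in the proof of Lemma~\ref{lem:a}.

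The new feature is the two--term shape of $b_h$. Expanding each $b_h^{m_h}$ binomially, I would split the $m_0+\dots+m_h$ level steps into those contributing the summand $y\,p^h$ and those contributing $(qu+tv)[h]_{p,q}$. Writing $j$ for the number of the former, the monomial produced is $y^j(tw)^k(qu+tv)^{\,m_0+\dots+m_h-j}$, and since $2(n_0+\dots+n_{h-1})+(m_0+\dots+m_h)=n$ the second exponent is exactly $n-2k-j$. Collecting the residual powers of $p$ and $q$ (from $p^h$, from the factors $[h]_{p,q}$ including the squares in $\lambda_h$, together with the integer multiplicities $\rho(\mathbf n,\mathbf m)\in\N$) into a polynomial $b_{n,k,j}(p,q)\in\N[p,q]$ then yields the asserted expansion
$$B_n(p,q,t,u,v,w,y)=\sum_{j=0}^{n} y^{j}\sum_{k=0}^{\floor{(n-j)/2}} b_{n,k,j}(p,q)\,(tw)^k (qu+tv)^{\,n-j-2k}.$$

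To identify $b_{n,k,j}(p,q)$ I would specialize $u=0$ on both sides. On the left this kills every $\sigma$ with $\cda\sigma>0$ in \eqref{eq:defB}, while on the right $qu+tv=tv$. Comparing the coefficient of $y^{j}w^{k}$ and using that a permutation with $\cda\sigma=0$ satisfies $\cpeak\sigma=\cvalley\sigma=k$, $\cdd\sigma=n-j-2k$, and $\defi\sigma=\cpeak\sigma+\cdd\sigma=n-j-k$ (so the accompanying powers of $t$ and $v$ match automatically), one obtains $b_{n,k,j}(p,q)=\sum_{\sigma\in\SS_{n,k,j}} p^{\nest\sigma}q^{\cros\sigma}$, which is the first stated formula.

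Finally, for the second equality I would invoke the bijection $\Phi$ of Lemma~\ref{lem:b}. Under $\Phi$ the conditions $\cvalley\sigma=k$, $\fix\sigma=j$, $\cda\sigma=0$ defining $\SS_{n,k,j}$ become $\valley\Phi(\sigma)=k$, $\fmax\Phi(\sigma)=j$, and $\da\Phi(\sigma)-\fmax\Phi(\sigma)=0$; the last two say that every double ascent of $\Phi(\sigma)$ is a foremaximum (so $\Phi(\sigma)\in\SS_n^*$) and that $\da\Phi(\sigma)=j$. Thus $\Phi$ restricts to a bijection $\SS_{n,k,j}\to\SS_{n,k,j}^*$ transporting $(\nest,\cros)$ to $(\RESS,\LES)$, giving $b_{n,k,j}(p,q)=\sum_{\sigma\in\SS_{n,k,j}^*} p^{\RESS\sigma}q^{\LES\sigma}$. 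The only genuinely delicate point is the bookkeeping of the two middle paragraphs, namely isolating the power of $y$ inside each $b_h^{m_h}$ and confirming that the remaining level steps contribute precisely $(qu+tv)^{\,n-j-2k}$; once the path model of \eqref{eq:A_D} is set up this is routine, and the rest is a direct application of Lemmas~\ref{lem:JR} and~\ref{lem:b}.
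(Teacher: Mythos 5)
Your proof is correct, and it is worth noting that it is genuinely more self-contained than what the paper offers: the paper gives no argument for this lemma at all, stating only that it "is derived by combining Corollary~7 and Theorem~8 in \cite{SZ12}." What you have done is run, for $B_n$, the same machine the paper showcases in its "new and neat" proof of Lemma~\ref{lem:a} for $A_n$: apply the Jacobi--Rogers formula (Lemma~\ref{lem:JR}) to \eqref{eq:A_D}, noting $\lambda_h=a_{h-1}c_h=tw\left([h]_{p,q}\right)^2$ so each $\lambda$-factor carries one $tw$, and expand $b_h^{m_h}=\bigl(yp^h+(qu+tv)[h]_{p,q}\bigr)^{m_h}$ binomially to isolate the power $y^j$, which yields the expansion with some $b_{n,k,j}(p,q)\in\N[p,q]$ and the correct exponent $n-j-2k$ since $2(n_0+\cdots+n_{h-1})+(m_0+\cdots+m_h)=n$. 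Your identification step is also sound: setting $u=0$ in \eqref{eq:defB} retains exactly the $\sigma$ with $\cda\sigma=0$, and for such $\sigma$ with $\cvalley\sigma=k$, $\fix\sigma=j$ one indeed has $\cdd\sigma=n-j-2k$ and $\defi\sigma=\cpeak\sigma+\cdd\sigma=n-j-k$, so the powers of $t$ and $v$ match automatically and comparing coefficients of $y^jw^k$ pins down $b_{n,k,j}$; the passage to $\SS^*_{n,k,j}$ via Lemma~\ref{lem:b} is right because foremaxima are by definition double ascents, so $\da-\fmax=0$ is precisely membership in $\SS^*_n$ together with $\da=\fmax=j$. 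One cosmetic slip: in your first paragraph you call $k=n_0+\cdots+n_{h-1}$ "the number of cyclic valleys" before that identification is available — at that stage $k$ is purely a parameter of the path decomposition, and the combinatorial meaning only follows from your later $u=0$ comparison — but nothing in the logic depends on the premature label. The trade-off between the two routes is clear: the paper buys brevity by outsourcing both combinatorial interpretations to \cite{SZ12}, while your argument makes the lemma verifiable entirely within the present paper, using only \eqref{eq:A_D}, Lemma~\ref{lem:JR}, and Lemma~\ref{lem:b}.
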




A permutation  $\sigma\in\SS_n$ is a  {\em coderangement} if $ \fmax\sigma=0$. Let
$$\D^*_n = \set{\sigma\in\SS_n: \fmax\sigma=0}.
$$
For example, we have
$
 \D^*_4 = \set{2143, 3142, 3241, 4123, 4132, 4213, 4231, 4312, 4321}.
 $
Thus,  $\SS_{n,k,0}$ is the subset of derangements in $\D_{n}$ with exactly $k$ cyclic valleys, and without
double excedance,  and $\SS^*_{n,k,0}$ is the subset of coderangements in $\D^*_{n}$ with exactly $k$ valleys and without double ascents.

\subsection{Proof of  Theorem~\ref{thm:inv-Eulerian}}
Let
$$
\tilde{A}_{n}(p,q,t)=\sum_{\pi\in \SS_{n}}p^{(13\mhyphen2)\pi}q^{(2\mhyphen31)\pi}t^{\des \pi}.
$$
Since the triple statistics $(\ress, \les, \des)$ and $(\res, \les, \des)$
are equidistributed on $\SS_n$, see \cite[Eq. (39)]{SZ12},
 Lemma~\ref{lem:b} infers that
\begin{align*}
\tilde{A}_n(p,q,t) =
\sum_{\sigma\in \SS_n} p^{\nest \sigma} q^{\cros \sigma} t^{\defi\sigma}.
\end{align*}
As $\inv\sigma^{-1}=\inv\sigma$ and $\drop\sigma=\exc\sigma^{-1}$ for any $\sigma\in \SS_n$, we have
$$
A_n(q,t):=\sum_{\sigma\in \SS_n}q^{\inv\sigma}t^{\exc \sigma}=\sum_{\sigma\in \SS_n}q^{\inv\sigma}t^{\drop\sigma}.
$$
Invoking the known result  (see \cite[Eq. (40)]{SZ10})
\begin{align}\label{eq:inv}
\inv=\drop+\cros+2\nest,
\end{align}
  we  have $\inv-\drop=\cros+2\nest$ and
$$
A_n(q,t/q)=\tilde{A}_n(q^2,q,t).
$$
The result follows then from Lemma~\ref{lem:a} with $\gamma_{n,k}(q) := a_{n,k}(q^2,q)$.\qed
\subsection{Proof of  Theorem~\ref{Der-A}}
Let
\begin{align}
D_n(q,t):=\sum_{\sigma\in \D_n}q^{\inv\sigma}t^{\exc \sigma}=\sum_{\sigma\in \D_n}q^{\inv\sigma}t^{\drop\sigma}.
\end{align}
By \eqref{eq:defB} we have $D_n(q,t)=B_n(q^2,q,qt,1,1,1,0)$. Hence,
 setting $y=0$, $p=q^2$ and $u=v=w=1$ in  Lemma~\ref{lem:B}
 and then replacing $t$ by $qt$
 we obtain
 $$
D_n(q,t)=\sum_{\sigma\in \D_{n}} q^{\inv\sigma}t^{\drop \sigma}=
\sum_{k=0}^{\lfloor n/2\rfloor}b_{n,k,0}(q^2,q)q^{n-k}t^k(1+t)^{n-2k},
$$
where
\begin{align*}
b_{n,k,0}(q^2,q)q^{n-k}=
\sum_{\sigma\in \SS_{n,k,0}} q^{2\nest \sigma+\cros \sigma+n-\cvalley\sigma}.
\end{align*}
Since $n-\cvalley \sigma =n-\cpeak \sigma=\drop\sigma$ for any $\sigma\in \D_n$ and
$$\SS_{n,k,0}=\{\sigma\in \D_n: \cvalley(\sigma)=k\}=\{\sigma\in \D_n: \exc(\sigma)=k\}=\DE_{n,k},
$$
 the result follows  from \eqref{eq:inv}.
\qed

\section{Proofs of Theorems~\ref{thm:DerangeR}, \ref{thm:fexc} and \ref{thm:analogue-as}}\label{sec:proof2}
\subsection{Preliminaries}\label{sec:cf}
Recall that an integer $i$ is called \emph{fixed point} (resp. \emph{A-excedance}, \emph{A-weak excedance})
of $\sigma\in \Z_r\wr \SS_n$
if $\sigma(i)=i$ (resp. $\sigma(i) >_c i$, $\sigma(i) \ge_c i $). Let
\begin{align*}
\fixa \sigma &= \#\set{i \in [n]: i = \sigma(i) },
\quad \exca \sigma = \#\set{i\in [n]: i<_c \sigma(i)},\\
\wexa \sigma &= \#\set{i\in [n]: i\le_c \sigma(i)} = \exca \sigma + \fixa \sigma.
\end{align*}
Also, we define two statistics about colors:
\begin{align*}
\wexc \sigma &= \#\set{i\in [n]: i\le \abs{\sigma(i)} \text{ and } \cnum(\sigma(i))>0 },\\
\csum \sigma &= \sum_{i \in [n]} \cnum(\sigma(i)).
\end{align*}
We generalize the crossings of permutations in $B_n$ (see \cite{CKJ13})  to colored permutations.
\begin{defn}[Crossings] For a permutation
$\sigma = \sigma(1) \dots \sigma(n) =
 {\pi_1 \dots \pi_n \choose \z_1 \dots \z_n} \in \Z_r \wr \SS_n$ a crossing is a pair $(i,j)$ such that
\begin{itemize}
\item $\z_i=\z_j=0$ and $i < j \le \pi_i < \pi_j $; or
\hspace{\fill}
\adjustbox{valign=t}{$
\begin{pgfpicture}{-2.00mm}{-6.10mm}{51.00mm}{5.50mm}
\pgfsetxvec{\pgfpoint{0.70mm}{0mm}}
\pgfsetyvec{\pgfpoint{0mm}{0.70mm}}
\color[rgb]{0,0,0}\pgfsetlinewidth{0.30mm}\pgfsetdash{}{0mm}
\pgfmoveto{\pgfxy(0.00,0.00)}\pgflineto{\pgfxy(70.00,0.00)}\pgfstroke
\pgfcircle[fill]{\pgfxy(10.00,0.00)}{0.70mm}
\pgfcircle[stroke]{\pgfxy(10.00,0.00)}{0.70mm}
\pgfcircle[fill]{\pgfxy(30.00,0.00)}{0.70mm}
\pgfcircle[stroke]{\pgfxy(30.00,0.00)}{0.70mm}
\pgfcircle[fill]{\pgfxy(40.00,0.00)}{0.70mm}
\pgfcircle[stroke]{\pgfxy(40.00,0.00)}{0.70mm}
\pgfcircle[fill]{\pgfxy(60.00,0.00)}{0.70mm}
\pgfcircle[stroke]{\pgfxy(60.00,0.00)}{0.70mm}
\pgfputat{\pgfxy(10.00,-5.00)}{\pgfbox[bottom,left]{\fontsize{7.97}{9.56}\selectfont \makebox[0pt]{$i$}}}
\pgfputat{\pgfxy(30.00,-5.00)}{\pgfbox[bottom,left]{\fontsize{7.97}{9.56}\selectfont \makebox[0pt]{$j$}}}
\pgfputat{\pgfxy(40.00,-5.00)}{\pgfbox[bottom,left]{\fontsize{7.97}{9.56}\selectfont \makebox[0pt]{$\pi_i$}}}
\pgfputat{\pgfxy(60.00,-5.00)}{\pgfbox[bottom,left]{\fontsize{7.97}{9.56}\selectfont \makebox[0pt]{$\pi_j$}}}
\pgfmoveto{\pgfxy(10.00,0.00)}\pgfcurveto{\pgfxy(14.33,3.25)}{\pgfxy(19.59,5.00)}{\pgfxy(25.00,5.00)}\pgfcurveto{\pgfxy(30.41,5.00)}{\pgfxy(35.67,3.25)}{\pgfxy(40.00,0.00)}\pgfstroke
\pgfmoveto{\pgfxy(30.00,0.00)}\pgfcurveto{\pgfxy(34.33,3.25)}{\pgfxy(39.59,5.00)}{\pgfxy(45.00,5.00)}\pgfcurveto{\pgfxy(50.41,5.00)}{\pgfxy(55.67,3.25)}{\pgfxy(60.00,0.00)}\pgfstroke
\end{pgfpicture}%
$}
\item $\z_i=\z_j=0$ and $\pi_i < \pi_j < i < j$; or
\hspace{\fill}
\adjustbox{valign=t}{$
\begin{pgfpicture}{-2.00mm}{-5.50mm}{51.00mm}{6.30mm}
\pgfsetxvec{\pgfpoint{0.70mm}{0mm}}
\pgfsetyvec{\pgfpoint{0mm}{0.70mm}}
\color[rgb]{0,0,0}\pgfsetlinewidth{0.30mm}\pgfsetdash{}{0mm}
\pgfmoveto{\pgfxy(0.00,0.00)}\pgflineto{\pgfxy(70.00,0.00)}\pgfstroke
\pgfcircle[fill]{\pgfxy(10.00,0.00)}{0.70mm}
\pgfcircle[stroke]{\pgfxy(10.00,0.00)}{0.70mm}
\pgfcircle[fill]{\pgfxy(30.00,0.00)}{0.70mm}
\pgfcircle[stroke]{\pgfxy(30.00,0.00)}{0.70mm}
\pgfcircle[fill]{\pgfxy(40.00,0.00)}{0.70mm}
\pgfcircle[stroke]{\pgfxy(40.00,0.00)}{0.70mm}
\pgfcircle[fill]{\pgfxy(60.00,0.00)}{0.70mm}
\pgfcircle[stroke]{\pgfxy(60.00,0.00)}{0.70mm}
\pgfputat{\pgfxy(40.00,3.00)}{\pgfbox[bottom,left]{\fontsize{7.97}{9.56}\selectfont \makebox[0pt]{$i$}}}
\pgfputat{\pgfxy(60.00,3.00)}{\pgfbox[bottom,left]{\fontsize{7.97}{9.56}\selectfont \makebox[0pt]{$j$}}}
\pgfputat{\pgfxy(10.00,3.00)}{\pgfbox[bottom,left]{\fontsize{7.97}{9.56}\selectfont \makebox[0pt]{$\pi_i$}}}
\pgfputat{\pgfxy(30.00,3.00)}{\pgfbox[bottom,left]{\fontsize{7.97}{9.56}\selectfont \makebox[0pt]{$\pi_j$}}}
\pgfmoveto{\pgfxy(10.00,0.00)}\pgfcurveto{\pgfxy(14.33,-3.25)}{\pgfxy(19.59,-5.00)}{\pgfxy(25.00,-5.00)}\pgfcurveto{\pgfxy(30.41,-5.00)}{\pgfxy(35.67,-3.25)}{\pgfxy(40.00,0.00)}\pgfstroke
\pgfmoveto{\pgfxy(30.00,0.00)}\pgfcurveto{\pgfxy(34.33,-3.25)}{\pgfxy(39.59,-5.00)}{\pgfxy(45.00,-5.00)}\pgfcurveto{\pgfxy(50.41,-5.00)}{\pgfxy(55.67,-3.25)}{\pgfxy(60.00,0.00)}\pgfstroke
\end{pgfpicture}%
$}
\item $\z_i>0$, $\z_j=0$, and $j \leq \pi_i <\pi_j$; or
\hspace{\fill}
\adjustbox{valign=t}{$
\begin{pgfpicture}{-5.93mm}{-6.10mm}{51.00mm}{9.09mm}
\pgfsetxvec{\pgfpoint{0.70mm}{0mm}}
\pgfsetyvec{\pgfpoint{0mm}{0.70mm}}
\color[rgb]{0,0,0}\pgfsetlinewidth{0.30mm}\pgfsetdash{}{0mm}
\pgfmoveto{\pgfxy(0.00,0.00)}\pgflineto{\pgfxy(70.00,0.00)}\pgfstroke
\pgfcircle[fill]{\pgfxy(30.00,0.00)}{0.70mm}
\pgfcircle[stroke]{\pgfxy(30.00,0.00)}{0.70mm}
\pgfcircle[fill]{\pgfxy(40.00,0.00)}{0.70mm}
\pgfcircle[stroke]{\pgfxy(40.00,0.00)}{0.70mm}
\pgfcircle[fill]{\pgfxy(60.00,0.00)}{0.70mm}
\pgfcircle[stroke]{\pgfxy(60.00,0.00)}{0.70mm}
\pgfputat{\pgfxy(10.00,-5.00)}{\pgfbox[bottom,left]{\fontsize{7.97}{9.56}\selectfont \makebox[0pt]{$i$}}}
\pgfputat{\pgfxy(30.00,-5.00)}{\pgfbox[bottom,left]{\fontsize{7.97}{9.56}\selectfont \makebox[0pt]{$j$}}}
\pgfputat{\pgfxy(40.00,-5.00)}{\pgfbox[bottom,left]{\fontsize{7.97}{9.56}\selectfont \makebox[0pt]{$\pi_i$}}}
\pgfputat{\pgfxy(60.00,-5.00)}{\pgfbox[bottom,left]{\fontsize{7.97}{9.56}\selectfont \makebox[0pt]{$\pi_j$}}}
\pgfmoveto{\pgfxy(30.00,0.00)}\pgfcurveto{\pgfxy(34.33,3.25)}{\pgfxy(39.59,5.00)}{\pgfxy(45.00,5.00)}\pgfcurveto{\pgfxy(50.41,5.00)}{\pgfxy(55.67,3.25)}{\pgfxy(60.00,0.00)}\pgfstroke
\pgfmoveto{\pgfxy(40.00,0.00)}\pgfcurveto{\pgfxy(29.87,7.35)}{\pgfxy(17.48,10.89)}{\pgfxy(5.00,10.00)}\pgfcurveto{\pgfxy(0.94,9.71)}{\pgfxy(-3.35,8.62)}{\pgfxy(-5.00,5.00)}\pgfcurveto{\pgfxy(-6.84,0.97)}{\pgfxy(-4.39,-3.71)}{\pgfxy(0.00,-5.00)}\pgfcurveto{\pgfxy(4.13,-6.21)}{\pgfxy(8.49,-4.03)}{\pgfxy(10.00,0.00)}\pgfstroke
\pgfcircle[fill]{\pgfxy(10.00,0.00)}{0.70mm}
\pgfcircle[stroke]{\pgfxy(10.00,0.00)}{0.70mm}
\end{pgfpicture}%
$}
\item $\z_i>0$, $\z_j=0$, and $\pi_j < i < j$; or
\hspace{\fill}
\adjustbox{valign=t}{$
\begin{pgfpicture}{-5.86mm}{-9.06mm}{51.00mm}{6.30mm}
\pgfsetxvec{\pgfpoint{0.70mm}{0mm}}
\pgfsetyvec{\pgfpoint{0mm}{0.70mm}}
\color[rgb]{0,0,0}\pgfsetlinewidth{0.30mm}\pgfsetdash{}{0mm}
\pgfmoveto{\pgfxy(0.00,0.00)}\pgflineto{\pgfxy(70.00,0.00)}\pgfstroke
\pgfcircle[fill]{\pgfxy(30.00,0.00)}{0.70mm}
\pgfcircle[stroke]{\pgfxy(30.00,0.00)}{0.70mm}
\pgfcircle[fill]{\pgfxy(40.00,0.00)}{0.70mm}
\pgfcircle[stroke]{\pgfxy(40.00,0.00)}{0.70mm}
\pgfcircle[fill]{\pgfxy(60.00,0.00)}{0.70mm}
\pgfcircle[stroke]{\pgfxy(60.00,0.00)}{0.70mm}
\pgfputat{\pgfxy(40.00,3.00)}{\pgfbox[bottom,left]{\fontsize{7.97}{9.56}\selectfont \makebox[0pt]{$i$}}}
\pgfputat{\pgfxy(60.00,3.00)}{\pgfbox[bottom,left]{\fontsize{7.97}{9.56}\selectfont \makebox[0pt]{$j$}}}
\pgfputat{\pgfxy(10.00,3.00)}{\pgfbox[bottom,left]{\fontsize{7.97}{9.56}\selectfont \makebox[0pt]{$\pi_i$}}}
\pgfputat{\pgfxy(30.00,3.00)}{\pgfbox[bottom,left]{\fontsize{7.97}{9.56}\selectfont \makebox[0pt]{$\pi_j$}}}
\pgfmoveto{\pgfxy(30.00,0.00)}\pgfcurveto{\pgfxy(34.33,-3.25)}{\pgfxy(39.59,-5.00)}{\pgfxy(45.00,-5.00)}\pgfcurveto{\pgfxy(50.41,-5.00)}{\pgfxy(55.67,-3.25)}{\pgfxy(60.00,0.00)}\pgfstroke
\pgfmoveto{\pgfxy(40.00,0.00)}\pgfcurveto{\pgfxy(29.78,-7.16)}{\pgfxy(17.46,-10.68)}{\pgfxy(5.00,-10.00)}\pgfcurveto{\pgfxy(0.88,-9.78)}{\pgfxy(-3.48,-8.69)}{\pgfxy(-5.00,-5.00)}\pgfcurveto{\pgfxy(-6.98,-0.21)}{\pgfxy(-3.02,4.99)}{\pgfxy(2.50,5.00)}\pgfcurveto{\pgfxy(5.78,5.00)}{\pgfxy(8.74,3.03)}{\pgfxy(10.00,0.00)}\pgfstroke
\pgfcircle[fill]{\pgfxy(10.00,0.00)}{0.70mm}
\pgfcircle[stroke]{\pgfxy(10.00,0.00)}{0.70mm}
\end{pgfpicture}%
$}
\item $\z_i>0$, $\z_j>0$, $i < j$, and $\pi_j <\pi_i$.
\hspace{\fill}
\adjustbox{valign=t}{$
\begin{pgfpicture}{-9.01mm}{-7.67mm}{51.00mm}{10.05mm}
\pgfsetxvec{\pgfpoint{0.70mm}{0mm}}
\pgfsetyvec{\pgfpoint{0mm}{0.70mm}}
\color[rgb]{0,0,0}\pgfsetlinewidth{0.30mm}\pgfsetdash{}{0mm}
\pgfmoveto{\pgfxy(0.00,0.00)}\pgflineto{\pgfxy(70.00,0.00)}\pgfstroke
\pgfcircle[fill]{\pgfxy(10.00,0.00)}{0.70mm}
\pgfcircle[stroke]{\pgfxy(10.00,0.00)}{0.70mm}
\pgfcircle[fill]{\pgfxy(30.00,0.00)}{0.70mm}
\pgfcircle[stroke]{\pgfxy(30.00,0.00)}{0.70mm}
\pgfcircle[fill]{\pgfxy(40.00,0.00)}{0.70mm}
\pgfcircle[stroke]{\pgfxy(40.00,0.00)}{0.70mm}
\pgfcircle[fill]{\pgfxy(60.00,0.00)}{0.70mm}
\pgfcircle[stroke]{\pgfxy(60.00,0.00)}{0.70mm}
\pgfputat{\pgfxy(10.00,-5.00)}{\pgfbox[bottom,left]{\fontsize{7.97}{9.56}\selectfont \makebox[0pt]{$i$}}}
\pgfputat{\pgfxy(30.00,-5.00)}{\pgfbox[bottom,left]{\fontsize{7.97}{9.56}\selectfont \makebox[0pt]{$j$}}}
\pgfputat{\pgfxy(40.00,-5.00)}{\pgfbox[bottom,left]{\fontsize{7.97}{9.56}\selectfont \makebox[0pt]{$\pi_i$}}}
\pgfputat{\pgfxy(60.00,-5.00)}{\pgfbox[bottom,left]{\fontsize{7.97}{9.56}\selectfont \makebox[0pt]{$\pi_j$}}}
\pgfmoveto{\pgfxy(10.00,0.00)}\pgfcurveto{\pgfxy(8.36,-2.69)}{\pgfxy(5.62,-4.52)}{\pgfxy(2.50,-5.00)}\pgfcurveto{\pgfxy(-1.41,-5.60)}{\pgfxy(-5.21,-3.45)}{\pgfxy(-5.00,0.00)}\pgfcurveto{\pgfxy(-4.84,2.52)}{\pgfxy(-2.40,4.00)}{\pgfxy(0.00,5.00)}\pgfcurveto{\pgfxy(19.62,13.19)}{\pgfxy(42.00,11.32)}{\pgfxy(60.00,0.00)}\pgfstroke
\pgfmoveto{\pgfxy(30.00,0.00)}\pgfcurveto{\pgfxy(22.37,-4.50)}{\pgfxy(13.83,-7.23)}{\pgfxy(5.00,-8.00)}\pgfcurveto{\pgfxy(-2.12,-8.62)}{\pgfxy(-9.55,-6.26)}{\pgfxy(-10.00,0.00)}\pgfcurveto{\pgfxy(-10.37,5.15)}{\pgfxy(-5.34,8.53)}{\pgfxy(0.00,10.00)}\pgfcurveto{\pgfxy(14.16,13.90)}{\pgfxy(29.34,10.10)}{\pgfxy(40.00,0.00)}\pgfstroke
\end{pgfpicture}%
$}
\end{itemize}
Let $\cros \sigma$ be the number of crossings in $\sigma \in \Z_r \wr \SS_n$.
\end{defn}
\begin{lem}\label{thm:cfb}
We have
\begin{align}\label{CF-B}
\sum_{n\ge 0} &\sum_{\sigma \in \Z_r \wr \SS_n} q^{\cros\sigma} t^{\wexa\sigma} w^{\wexc\sigma} x^{\fixa\sigma} y^{\csum \sigma} z^n=
\dfrac{1}{
1-b_0 z - \dfrac{a_0 c_1 z^2}{
1-b_1 z - \dfrac{a_1 c_2 z^2}{
\cdots}}},
\end{align}
where the coefficients $a_h$, $b_h$ ($h\geq 0$) and $c_h$ ($h\geq 1$) are given by
\begin{align*}
a_h &= (t + w y[r-1]_{y} q^h) (1 + y [r-1]_{y} q^{h+1}),\\
b_h &= (1 + y[r-1]_{y} q^h) [h]_q + t( x + q[h]_q) + wy[r-1]_{y} q^h [h+1]_q,\\
c_h &= [h]_q^2.
\end{align*}
\end{lem}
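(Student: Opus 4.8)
The plan is to prove \eqref{CF-B} through the combinatorial theory of continued fractions, by encoding each colored permutation as a weighted labelled Motzkin path (a \emph{colored Laguerre history}) in such a way that the path generating function is, by Flajolet's fundamental lemma, exactly the right-hand side of \eqref{CF-B}. Recall the dictionary: a $J$-fraction with level denominators $1-b_hz$ and numerators $a_hc_{h+1}z^2$ enumerates Motzkin paths of length $n$ weighted so that a level step at height $h$ carries $b_h$, and an up-step $h\to h+1$ together with its matching down-step $h+1\to h$ carries total weight $a_hc_{h+1}$; this is the specialization of Lemma~\ref{lem:JR} used here. Hence it suffices to construct a bijection $\sigma\mapsto(P,\xi)$, together with a weighting of the steps of $P$, whose total weight equals $q^{\cros\sigma}t^{\wexa\sigma}w^{\wexc\sigma}x^{\fixa\sigma}y^{\csum\sigma}$ and for which the weights collected at each height yield precisely $a_h$, $b_h$ and $c_h$.

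First I would extend the Foata--Zeilberger/Fran\c{c}on--Viennot encoding to the colored setting. Reading the underlying permutation $\pi$ of $\sigma$ through its cyclic structure, each index is classified exactly as in the Definition preceding the statement as a cyclic valley, cyclic peak, double excedance, double drop, or fixed point; valleys give up-steps, peaks give down-steps, and the other three cases give level steps. The current height records the number of arcs open when an index is processed, and the crossing number $\cros\sigma$ is accumulated as the number of already-open arcs crossed by each newly inserted arc, which is the source of the $q$-powers. Because a cyclic valley opens two arcs and a cyclic peak closes two, the uncolored crossings contribute one factor $[h+1]_q$ per arc, producing the square $c_{h+1}=[h+1]_q^2$ attached to each up/down pair.

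Next I would overlay the colors. The color of the letter processed at a given step contributes to $y^{\csum\sigma}$, and summing it over its nonzero values produces the factor $y[r-1]_y=y+y^2+\dots+y^{r-1}$; a colored arc also participates in the crossings listed in cases three to five of the Definition, so its color weight is multiplied by the appropriate $q$-power $q^h$ or $q^{h+1}$ according to the height at which it is opened or closed. The marker $t$ is placed on each weak excedance, the marker $w$ on each colored weak excedance, and $x$ on each fixed point. Collecting all contributions at height $h$, the level steps assemble into the fixed-point term $tx$, the plain double-excedance/double-drop terms built from $[h]_q$, and their colored companions weighted by $y[r-1]_yq^h$, summing to $b_h$; the opening arc of a valley supplies the factor $t+wy[r-1]_yq^h$ and its non-excedance partner the factor $1+y[r-1]_yq^{h+1}$, whose product is $a_h$.

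The main obstacle is the coupling of crossings and colors. One must verify that coloring an arc by a nonzero color interacts with the running crossing count exactly so as to upgrade the plain $q$-weight at a height to the combination $y[r-1]_yq^h$ (resp.\ $y[r-1]_yq^{h+1}$ on the closing side), and that the two arcs attached to a valley or peak factor cleanly into the product defining $a_h$ without mixing adjacent heights. Fixing the precise height indices, confirming that the Eulerian markers $t$, $w$ and the fixed-point marker $x$ attach to the correct step types, and checking that the five crossing types of the Definition are each reproduced by the path model, is the delicate bookkeeping; this verification is carried out in Section~\ref{sec:proof3}.
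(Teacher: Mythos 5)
Your proposal follows essentially the same route as the paper: a colored extension of the Foata--Zeilberger bijection sending $\sigma\in\Z_r\wr\SS_n$ to a labelled Motzkin path (an $r$-colored Laguerre history), with crossings tracked through the pignose diagram and Flajolet's lemma converting the height-by-height weight sums $a_h$, $b_h$, $c_h$ into the $J$-fraction. The paper's only extra twist is that it proves the refined identity \eqref{strong:thm}, carrying separate variables $\tilde t,\tilde w,\tilde x,\tilde y$ on drops, and obtains the lemma by specialization, but the bijection and the weight bookkeeping are exactly the ones you describe.
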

\rmk Corteel et al \cite{CKJ13} have proved \eqref{CF-B} in the case of $r=2$ and $x=1$. Actually we shall prove a refined version of \eqref{CF-B}. See \eqref{strong:thm} in Section~\ref{sec:proof3}.

\begin{lem} We have
\begin{align}\label{cfD}
\sum_{n\geq 0} D_n^{(r)}(t) z^n
=\cfrac{1}{1-b_0z-\cfrac{\lambda_1 z^2}{1-b_1z-\cfrac{\lambda_2 z^2}{\cdots}}},
\end{align}
where $b_h=t[r-1]_t + h (1+t) [r]_t$ ($h\geq 0$) and $\lambda_h=t [r]_t^2 h^2$ ($h\geq 1$)
 and
\begin{align}
\sum_{n\geq 0}d_{n}^{(r)}(t) z^{n}
=\cfrac{1}{1-b'_0z-\cfrac{\lambda'_1 z^2}{1-b'_1z-\cfrac{\lambda'_2 z^2}{\cdots}}},
\label{cfd}
\end{align}
where $b'_h=(r-1)t + rh(1+t)$ ($h\geq 0$) and $\lambda'_h=(r h)^2 t $ ($h\geq 1$).
\end{lem}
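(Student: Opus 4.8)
The plan is to obtain both continued fractions \eqref{cfD} and \eqref{cfd} as specializations of the master continued fraction \eqref{CF-B} of Lemma~\ref{thm:cfb}, once the two relevant Eulerian statistics on derangements have been rewritten in terms of the statistics $\wexa$, $\wexc$ and $\csum$ that occur there. First I would record two identities valid for every $\sigma\in\D_n^{(r)}$. Since a derangement has no uncolored fixed point we have $\fixa\sigma=0$, hence $\wexa\sigma=\exca\sigma$. Writing $\sigma(i)=\pi_i^{[\z_i]}$ and sorting the friends-order excedances of $\sigma$ by the color $\z_i$: those with $\z_i=0$ satisfy $i<_f\sigma(i)$ iff $\pi_i>i$ and are counted by $\exca\sigma$, while those with $\z_i>0$ satisfy $i<_f\sigma(i)$ iff $\pi_i\ge i$ and are counted by $\wexc\sigma$. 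Therefore
$$\exc\sigma=\exca\sigma+\wexc\sigma=\wexa\sigma+\wexc\sigma,$$
whereas the definition of the flag excedance gives directly
$$\fexc\sigma=r\,\exca\sigma+\csum\sigma=r\,\wexa\sigma+\csum\sigma.$$

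These two identities dictate the substitutions. To prove \eqref{cfd} I would set $q=1$, $y=1$, $x=0$ and $w=t$ (keeping the variable $t$) in \eqref{CF-B}: the factor $x^{\fixa\sigma}$ annihilates every colored permutation that is not a derangement, and the surviving weight is $t^{\wexa\sigma+\wexc\sigma}=t^{\exc\sigma}$, so the left-hand side collapses to $\sum_n d_n^{(r)}(t)z^n$. To prove \eqref{cfD} I would instead set $q=1$, $x=0$, $w=1$, replace $t$ by $t^r$, and set $y=t$; then the weight becomes $t^{r\wexa\sigma+\csum\sigma}=t^{\fexc\sigma}$ and the left-hand side is $\sum_n D_n^{(r)}(t)z^n$.

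It then remains to simplify the coefficients $a_h$, $b_h$, $c_h$ of Lemma~\ref{thm:cfb} under each specialization and to match them against the target data, recalling that the depth-$h$ partial numerator $a_hc_{h+1}z^2$ corresponds to $\lambda_{h+1}z^2$. With $q=1$ one has $[h]_q=h$, so $c_h=h^2$ throughout. For \eqref{cfd}, $[r-1]_y=r-1$ gives $a_h=(t+t(r-1))\cdot r=r^2t$, whence $\lambda_{h+1}=a_hc_{h+1}=(r(h+1))^2t$, and a short computation gives $b_h=rh+th+t(r-1)(h+1)=(r-1)t+rh(1+t)$. For \eqref{cfD}, $[r-1]_y=[r-1]_t$ and the elementary identities $1+t[r-1]_t=[r]_t$ and $t^r+t[r-1]_t=t[r]_t$ give $a_h=t[r]_t\cdot[r]_t=t[r]_t^2$, so $\lambda_{h+1}=t[r]_t^2(h+1)^2$; likewise $b_h=[r]_t\,h+t^rh+t[r-1]_t(h+1)=t[r-1]_t+h(1+t)[r]_t$, where the last step uses $(1+t)[r]_t=[r]_t+t[r-1]_t+t^r$.

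The only genuinely delicate point is the first step: verifying that the friends-order excedance number $\exc$ decomposes exactly as $\wexa+\wexc$ on derangements. This requires keeping careful track of the three orders ($<_f$, $<_c$ and the natural order) used in the various definitions, together with the observation that a colored permutation is a derangement precisely when $\fixa=0$ (so that $\z_i>0$ with $\pi_i=i$ is permitted and is correctly absorbed into $\wexc$). Once these bookkeeping identities are secured, the remainder is routine parameter substitution and the elementary $[r]_t$-identities above.
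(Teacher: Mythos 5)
Your proposal is correct and takes essentially the same route as the paper: the authors likewise obtain \eqref{cfD} and \eqref{cfd} by substituting $q=1$, $t\leftarrow t^r$, $w=1$, $x=0$, $y\leftarrow t$, respectively $q=1$, $w\leftarrow t$, $x=0$, $y=1$, into \eqref{CF-B} of Lemma~\ref{thm:cfb}, justified by the identities $\fexc=r(\wexa-\fixa)+\csum$ and $\exc=(\wexa-\fixa)+\wexc$ (which on derangements, where $\fixa=0$, are exactly your decompositions). Your write-up merely makes explicit the statistic bookkeeping and the coefficient simplifications via $1+t[r-1]_t=[r]_t$ and $t^r+t[r-1]_t=t[r]_t$ that the paper leaves to the reader, all of which check out.
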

\begin{proof}
We can derive \eqref{cfD} and \eqref{cfd} from Lemma~\ref{thm:cfb} as
\begin{align}
\fexc &= r(\wexa-\fixa)+\csum,\\
\exc  &= (\wexa-\fixa)+\wexc.
\end{align}
Letting $q=1$, $t \leftarrow t^r $, $w=1$, $x=0$ and $y \leftarrow t$ in \eqref{CF-B},
we obtain \eqref{cfD}.
Also, letting $q=1$, $t \leftarrow  t$, $w \leftarrow t$, $x=0$ and $y = 1$ in \eqref{CF-B}, we obtain \eqref{cfd}.
\end{proof}

\begin{rmk} It is also possible to derive \eqref{cfD} and \eqref{cfd} from the known generating functions.
\begin{enumerate}[(i)]
\item
Let
$B_{n}^{(r)}(t,x)=\sum_{\sigma\in \Z_r \wr \B_n}t^{\fexc \sigma}x^{\fix \sigma}$. Since
$D_n^{(r)}(t) = B_{n}^{(r)}(t,0)$,
 from the  known generating function formula
(see \cite[(2.23)]{Lin14} and \cite[(1.13)]{FH11} or \cite[(1.9)]{FH09} for $r=2$)
\begin{align}\label{DB}
\sum_{n\geq 0}D^{(r)}_{n}(t)\frac{z^{n}}{n!}=\frac{1-t}{e^{t^{r}z}-te^z},
\end{align}
we derive that
\begin{align}\label{gf:Br}
\sum_{n\geq 0}\frac{D^{(r)}_{n}(t)}{([r]_t)^n}\frac{z^{n}}{n!}
=e^{-\frac{t^r}{[r]_t}z}    \frac{1-t}{1-te^{(1-t)z}}.
\end{align}
Also, it is known (see \cite{CTZ09, Cho09}) that
\begin{align}
\sum_{n\geq 0}\frac{d_{n}^{(r)}(t)}{r^n}\frac{z^{n}}{n!}
=e^{-tz/r} \frac{1-t}{1-te^{(1-t)z}}.\label{eq:ctz09}
\end{align}
Applying the addition formula of Rogers-Stieltjes~(see \cite[Chap. 5]{GJ83} and  \cite{Zen93}) to
\eqref{gf:Br} and \eqref{eq:ctz09} we also obtain the
continued fraction expansions \eqref{cfD} and \eqref{cfd}.

\item
Note that $\frac{1-t}{1-te^{(1-t)z}}$ is the exponential generatying function of Eulerian polynomials for symmetric groups.

\item
In \eqref{CF-B} if we set $r=2$ and do the following substitution
$$
q\leftarrow 1,\quad w\leftarrow t,\quad x\leftarrow t^{-1},\text{ and } \quad y\leftarrow 1,
$$
then we obtain $a_h= 4t$, $b_h=(2h+1)(t+1)$ and $c_h=h^2$. These are the same coefficients obtained when we specialize \eqref{eq:A_D} as follows:
$$
\text{$p=q=t=u=v=y=1$, $w \leftarrow \frac{4t}{(1+t)^2}$, and $z \leftarrow (1+t)z$}.
$$
Combining Lemmas~\ref{lem:B} and \ref{thm:cfb}, we have
\begin{align}\label{eq:pet}
\sum_{\sigma\in \B_n }t^{\exc \sigma}
&= \sum_{k=0}^{\lfloor n/2 \rfloor} \left[ \sum_{j=0}^{n-2k} 2^{(n-2k-j)} b_{n,k,j}(1,1)  \right] (4t)^k (1+t)^{n-2k},
\end{align}
where $n\ge 2k+j$ and $b_{n,k,j}(1,1)$ is the cardinality of the set $\SS_{n,k,j}$, which is defined by \eqref{eq:Snkj}.
Note that this result \label{eq:pet} is comparable with the $\gamma$-positivity result of Chow~\cite[Theorem~4.7]{Cho08}.

\end{enumerate}
\end{rmk}

\subsection{Proof of  Theorem~\ref{thm:DerangeR}}
Let
\begin{align}\label{eq:Bn}
B_n(1,1,1,0,1,w,y) = \sum_{\sigma \in \SS_n \atop \cda(\sigma)=0}  w^{\exc \sigma}y^{\fix\sigma}=\sum_{i,j}\gamma_{n,i,j}w^j y^i,
\end{align}
where $\gamma_{n,i,j}=|\{\sigma\in \SS_n: \cda\sigma=0, \; \fix\sigma=i,\; \exc\sigma=j\}|$. Now, specializing  the formula \eqref{eq:A_D}  yields
\begin{align}\label{cf4}
\sum_{n\ge 0} B_n(1,1,1,0,1,w,y) z^n
=
\cfrac{1}{1 - b_0 z -
\cfrac{a_0 c_1 z^2}{1 - b_1 z -
\cfrac{a_1 c_2 z^2}{\cdots}}},
\end{align}
where $a_h = w(h+1)$ ($h\ge0$), $b_h=y+h$ ($h\ge0$), and $c_h=h$ ($h\ge1$).
Comparing the right-hand sides of \eqref{cfD} and \eqref{cf4}, we derive
\begin{align}\label{eqDB}
D_{n}^{(r)}(t)=(1+t)^{n}([r]_t)^{n} B_n(1,1,1,0,1,w,y)
\end{align}
with
$$
w= \frac{t}{(1+t)^2}\quad{and}\quad y = \frac{t[r-1]_t}{(1+t)[r]_t}.
$$
Since $\cvalley(\sigma) = \exc(\sigma)$ for any $\sigma\in \SS_n$,
 it follows from \eqref{eq:Bn} and \eqref{eqDB} that
\begin{align}
D_{n}^{(r)}(t) = \sum_{1\leq i+2j\leq n}
\gamma_{n,i,j} \;t^{i+j} (1+t)^{n-i-2j} ([r-1]_t)^i([r]_t)^{n-i}.
\label{eq:interpret2}
\end{align}
In the same vein,
comparing the right-hand sides of \eqref{cfd} and \eqref{cf4}, we derive
\begin{align}\label{eqdB}
d_{n}^{(r)}(t)=(1+t)^{n} r^{n} B_n(1,1,1,0,1,w,y)
\end{align}
with
$$
w= \frac{t}{r(1+t)^2}\quad{and}\quad y = \frac{t(r-1)}{r(1+t)}.
$$
Therefore,  by \eqref{eq:Bn} and \eqref{eqdB}  we have
\begin{align}\label{eq:interpret2d}
d_{n}^{(r)}(t) = \sum_{1\leq i+2j\leq n}
\gamma_{n,i,j} \; t^{i+j} (1+t)^{n-i-2j} (r-1)^i r^{n-i}.
\end{align}
This completes the proof of Theorem~\ref{thm:DerangeR}.
\qed

\subsection{Proof of Theorem~\ref{thm:fexc}}
Since  any primitive $r$th root $\omega = e^{2 \pi \imath / r}$ of unity  satisfies
$$\sum_{i=0}^{r-1} (\omega^k)^i =
\begin{cases}
0 &\text{if $r \nmid k$,}\\
r &\text{if $r \mid k$,}
\end{cases}
$$
  for integer $j=0, \ldots, r-1$, we have
\begin{align}
\frac{1}{r} \sum_{i=0}^{r-1} \frac{\omega^{ij}}{X-\omega^i Y} =
\begin{cases}
\displaystyle \frac{X^{r-1}}{X^r-Y^r} &\text{if $j=0$,}\\
\displaystyle \frac{X^{j-1}Y^{r-j}}{X^r-Y^r} &\text{if $1\le j < r$.}\\
\end{cases}
\end{align}
It follows that for all $0 \le j < r$,
\begin{align}
\sum_{i=0}^{r-1} \omega^{ij} D_{n}^{(r)}(\omega^i t)
&= \sum_{k\geq 0} D_{n,k}^{(r)} \left( \sum_{i=0}^{r-1} w^{ij} (w^i t)^{k} \right) \notag\\
&= \sum_{k\geq 0} D_{n,k}^{(r)} \left( \sum_{i=0}^{r-1} (w^{k+j})^i \right) t^{k}
= r \, \sum_{k\geq \lceil j/r \rceil} D_{n,rk-j}^{(r)} t^{rk-j}. \label{eq:sum}
\end{align}
From \eqref{eq:sum} and \eqref{DB}, for all $0 \le j < r$, we derive that
\begin{align}
\sum_{n\geq 0}
\sum_{k\geq \lceil j/r \rceil} D_{n,rk-j}^{(r)} t^{rk-j}
\frac{x^{n}}{n!}
&=
\sum_{n\geq 0}
\frac{1}{r} \left(\sum_{i=0}^{r-1} \omega^{ij}D_{n}^{(r)}(\omega^{i} t)\right)
\frac{x^{n}}{n!}\nonumber\\
&=
\frac{1}{r} \sum_{i=0}^{r-1} \frac{\omega^{ij}- \omega^{i(j+1)} t}{e^{t^r x} - \omega^i t e^{x}}
\notag \\
&=
\begin{cases}
\displaystyle \frac{e^{(r-1)t^r x}-t^re^{(r-1)x}}{e^{rt^r x}-t^re^{rx}} &\text{if $j=0$,}\\
\displaystyle \frac{ t^{r-j} e^{(j-1) t^r x + (r-j)x} ( 1- e^{(t^r-1) x}) }{e^{rt^r x}-t^re^{rx}} &\text{if $1\le j < r$.}
\end{cases}
\label{eq:odd}
\end{align}
Summing  \eqref{eq:odd} multiplied by $t^j$ over all $0 \le j < r$ yields
\begin{align}
\sum_{n\geq 0}
\sum_{k\geq 0}
D_{n,k}^{(r)} t^{ r \lceil k/r \rceil}
\frac{x^{n}}{n!}
&=\frac{(1-t^r)e^{(r-1) t^r x}}{e^{rt^r x}-t^re^{rx}}. \label{eq:dn}
\end{align}
Comparing  \eqref{eq:dn}  with \eqref{eq:ctz09}  we see that
\begin{align}
d_{n}^{(r)}(t) = \sum_{\sigma\in \D_n^{(r)}}t^{\lceil \fexc(\sigma)/r \rceil}.
\label{eq:eqfexc}
\end{align}
Since $\lceil \fexc(\sigma)/r \rceil=k$ if and only if $\fexc(\sigma)=rk-j$ for $j=0, \ldots, r-1$,
we obtain \eqref{eq:fexc}.
\qed

\subsection{Proof of Theorem~\ref{thm:analogue-as}}
By Theorem~\ref{thm:DerangeB}, there are  $\gamma_{n,i}^{(2)}$ permutations
in $\SS_n$ with  $i$ weak excedances with $0\leq i\leq k$, for any such permutation, there are
${n-i \choose k-i}$ ways to choose  $k-i$ drops among
$n-i$ drops and $2^{k-i}$ ways to  color the chosen drops by either \emph{tilde} or \emph{bar}. Summing over $i$ we see that the number of permutations in $\Z_n$
 whose total number of weak excedances and colored drops is $k$ is given by
 $\sum_{i=0}^k \gamma_{n,i}^{(2)} 2^{k-i} {n-i \choose k-i}$, which is the formula\eqref{eq:bb}.
Now, writing  $(1+t)^{2n-2k}= (1+2t+t^2)^{n-k}=\sum_{j\geq 0}{n-k\choose j} (2t)^j(1+t^2)^{n-k-j}$ and
 substituting this into
\eqref{derangeB} we get
\begin{align*}
D_n^{(2)}(t)&=\sum_{k=1}^{n}\sum_{j\geq 0} \gamma_{n,k}^{(2)}\, 2^j{n-k\choose j} t^{k+j}(1+t^2)^{n-k-j}\\
&=\sum_{k=1}^{n}\sum_{i\geq 0} \gamma_{n,i}^{(2)}\, 2^{k-i}{n-k\choose k-i} t^{k}(1+t^2)^{n-k},
\end{align*}
which is \eqref{eq:SZ} in view of \eqref{eq:bb}.
Finally, we derive  \eqref{eq:AS} from
Theorem~\ref{thm:fexc}.  \qed

\section{A continued fraction expansion}\label{sec:proof3}

\subsection{$r$-colored Laguerre histories}\label{subsec:example}
A \emph{Motzkin path} of length $n$ is a sequence of points
$\gamma=(\gamma_0, \gamma_1, \ldots, \gamma_n)$  in
the plan $\N\times \N$ with $\gamma_i=(x_i, y_i)$ ($0\leq i\leq n$) such that
$\gamma_0=(0,0)$ and $\gamma_n=(n, 0)$;
and each step $(\gamma_{i-1}, \gamma_i)$ stisfies
$\gamma_i-\gamma_{i-1}=(1, 1), (1,0)$ or $(1, -1)$, and is called East, North-East, and South-East, respectively.
The \emph{height} $h_i$ of the $i$th step $(\gamma_{i-1}, \gamma_i)$ is the ordinate of $\gamma_{i-1}$.
We can depict  a Motzkin path $\gamma$ by drawing a line connecting  $\gamma_i$
and $\gamma_{i+1}$ as follows:
$$
\centering
\begin{pgfpicture}{-5.20mm}{-6.69mm}{98.00mm}{34.00mm}
\pgfsetxvec{\pgfpoint{0.80mm}{0mm}}
\pgfsetyvec{\pgfpoint{0mm}{0.80mm}}
\color[rgb]{0,0,0}\pgfsetlinewidth{0.30mm}\pgfsetdash{}{0mm}
\pgfmoveto{\pgfxy(0.00,0.00)}\pgflineto{\pgfxy(0.00,40.00)}\pgfstroke
\pgfmoveto{\pgfxy(0.00,40.00)}\pgflineto{\pgfxy(-0.70,37.20)}\pgflineto{\pgfxy(0.70,37.20)}\pgflineto{\pgfxy(0.00,40.00)}\pgfclosepath\pgffill
\pgfmoveto{\pgfxy(0.00,40.00)}\pgflineto{\pgfxy(-0.70,37.20)}\pgflineto{\pgfxy(0.70,37.20)}\pgflineto{\pgfxy(0.00,40.00)}\pgfclosepath\pgfstroke
\pgfmoveto{\pgfxy(0.00,0.00)}\pgflineto{\pgfxy(120.00,0.00)}\pgfstroke
\pgfmoveto{\pgfxy(120.00,0.00)}\pgflineto{\pgfxy(117.20,0.70)}\pgflineto{\pgfxy(117.20,-0.70)}\pgflineto{\pgfxy(120.00,0.00)}\pgfclosepath\pgffill
\pgfmoveto{\pgfxy(120.00,0.00)}\pgflineto{\pgfxy(117.20,0.70)}\pgflineto{\pgfxy(117.20,-0.70)}\pgflineto{\pgfxy(120.00,0.00)}\pgfclosepath\pgfstroke
\pgfcircle[fill]{\pgfxy(10.00,0.00)}{0.80mm}
\pgfcircle[stroke]{\pgfxy(10.00,0.00)}{0.80mm}
\pgfcircle[fill]{\pgfxy(0.00,0.00)}{0.80mm}
\pgfcircle[stroke]{\pgfxy(0.00,0.00)}{0.80mm}
\pgfcircle[fill]{\pgfxy(20.00,10.00)}{0.80mm}
\pgfcircle[stroke]{\pgfxy(20.00,10.00)}{0.80mm}
\pgfcircle[fill]{\pgfxy(30.00,20.00)}{0.80mm}
\pgfcircle[stroke]{\pgfxy(30.00,20.00)}{0.80mm}
\pgfcircle[fill]{\pgfxy(40.00,20.00)}{0.80mm}
\pgfcircle[stroke]{\pgfxy(40.00,20.00)}{0.80mm}
\pgfcircle[fill]{\pgfxy(70.00,10.00)}{0.80mm}
\pgfcircle[stroke]{\pgfxy(70.00,10.00)}{0.80mm}
\pgfputat{\pgfxy(10.00,-5.00)}{\pgfbox[bottom,left]{\fontsize{9.10}{10.93}\selectfont \makebox[0pt]{1}}}
\pgfputat{\pgfxy(20.00,-5.00)}{\pgfbox[bottom,left]{\fontsize{9.10}{10.93}\selectfont \makebox[0pt]{2}}}
\pgfputat{\pgfxy(30.00,-5.00)}{\pgfbox[bottom,left]{\fontsize{9.10}{10.93}\selectfont \makebox[0pt]{3}}}
\pgfputat{\pgfxy(40.00,-5.00)}{\pgfbox[bottom,left]{\fontsize{9.10}{10.93}\selectfont \makebox[0pt]{4}}}
\pgfputat{\pgfxy(50.00,-5.00)}{\pgfbox[bottom,left]{\fontsize{9.10}{10.93}\selectfont \makebox[0pt]{5}}}
\pgfputat{\pgfxy(60.00,-5.00)}{\pgfbox[bottom,left]{\fontsize{9.10}{10.93}\selectfont \makebox[0pt]{6}}}
\pgfputat{\pgfxy(70.00,-5.00)}{\pgfbox[bottom,left]{\fontsize{9.10}{10.93}\selectfont \makebox[0pt]{7}}}
\pgfputat{\pgfxy(80.00,-5.00)}{\pgfbox[bottom,left]{\fontsize{9.10}{10.93}\selectfont \makebox[0pt]{8}}}
\pgfputat{\pgfxy(90.00,-5.00)}{\pgfbox[bottom,left]{\fontsize{9.10}{10.93}\selectfont \makebox[0pt]{9}}}
\pgfputat{\pgfxy(0.00,-5.00)}{\pgfbox[bottom,left]{\fontsize{9.10}{10.93}\selectfont \makebox[0pt]{0}}}
\pgfputat{\pgfxy(-2.00,9.00)}{\pgfbox[bottom,left]{\fontsize{9.10}{10.93}\selectfont \makebox[0pt][r]{1}}}
\pgfputat{\pgfxy(-2.00,19.00)}{\pgfbox[bottom,left]{\fontsize{9.10}{10.93}\selectfont \makebox[0pt][r]{2}}}
\pgfsetlinewidth{0.60mm}\pgfmoveto{\pgfxy(10.00,0.00)}\pgflineto{\pgfxy(20.00,10.00)}\pgfstroke
\pgfmoveto{\pgfxy(0.00,0.00)}\pgflineto{\pgfxy(10.00,0.00)}\pgfstroke
\pgfmoveto{\pgfxy(20.00,10.00)}\pgflineto{\pgfxy(30.00,20.00)}\pgfstroke
\pgfmoveto{\pgfxy(40.00,20.00)}\pgflineto{\pgfxy(50.00,30.00)}\pgfstroke
\pgfmoveto{\pgfxy(30.00,20.00)}\pgflineto{\pgfxy(40.00,20.00)}\pgfstroke
\pgfcircle[fill]{\pgfxy(50.00,30.00)}{0.80mm}
\pgfcircle[stroke]{\pgfxy(50.00,30.00)}{0.80mm}
\pgfmoveto{\pgfxy(50.00,30.00)}\pgflineto{\pgfxy(60.00,20.00)}\pgfstroke
\pgfcircle[fill]{\pgfxy(60.00,20.00)}{0.80mm}
\pgfcircle[stroke]{\pgfxy(60.00,20.00)}{0.80mm}
\pgfmoveto{\pgfxy(60.00,20.00)}\pgflineto{\pgfxy(70.00,10.00)}\pgfstroke
\pgfmoveto{\pgfxy(80.00,10.00)}\pgflineto{\pgfxy(90.00,20.00)}\pgfstroke
\pgfmoveto{\pgfxy(70.00,10.00)}\pgflineto{\pgfxy(80.00,10.00)}\pgfstroke
\pgfmoveto{\pgfxy(90.00,20.00)}\pgflineto{\pgfxy(100.00,10.00)}\pgfstroke
\pgfmoveto{\pgfxy(100.00,10.00)}\pgflineto{\pgfxy(110.00,0.00)}\pgfstroke
\pgfcircle[fill]{\pgfxy(90.00,20.00)}{0.80mm}
\pgfcircle[stroke]{\pgfxy(90.00,20.00)}{0.80mm}
\pgfcircle[fill]{\pgfxy(100.00,10.00)}{0.80mm}
\pgfcircle[stroke]{\pgfxy(100.00,10.00)}{0.80mm}
\pgfcircle[fill]{\pgfxy(80.00,10.00)}{0.80mm}
\pgfcircle[stroke]{\pgfxy(80.00,10.00)}{0.80mm}
\pgfcircle[fill]{\pgfxy(110.00,0.00)}{0.80mm}
\pgfcircle[stroke]{\pgfxy(110.00,0.00)}{0.80mm}
\pgfputat{\pgfxy(110.00,-5.00)}{\pgfbox[bottom,left]{\fontsize{9.10}{10.93}\selectfont \makebox[0pt]{11}}}
\pgfputat{\pgfxy(100.00,-5.00)}{\pgfbox[bottom,left]{\fontsize{9.10}{10.93}\selectfont \makebox[0pt]{10}}}
\pgfputat{\pgfxy(-2.00,29.00)}{\pgfbox[bottom,left]{\fontsize{9.10}{10.93}\selectfont \makebox[0pt][r]{3}}}
\pgfputat{\pgfxy(5.00,2.00)}{\pgfbox[bottom,left]{\fontsize{9.10}{10.93}\selectfont \makebox[0pt]{$b_0$}}}
\pgfputat{\pgfxy(75.00,12.00)}{\pgfbox[bottom,left]{\fontsize{9.10}{10.93}\selectfont \makebox[0pt]{$b_1$}}}
\pgfputat{\pgfxy(35.00,22.00)}{\pgfbox[bottom,left]{\fontsize{9.10}{10.93}\selectfont \makebox[0pt]{$b_2$}}}
\pgfputat{\pgfxy(54.00,27.00)}{\pgfbox[bottom,left]{\fontsize{9.10}{10.93}\selectfont $c_3$}}
\pgfputat{\pgfxy(64.00,17.00)}{\pgfbox[bottom,left]{\fontsize{9.10}{10.93}\selectfont $c_2$}}
\pgfputat{\pgfxy(94.00,17.00)}{\pgfbox[bottom,left]{\fontsize{9.10}{10.93}\selectfont $c_2$}}
\pgfputat{\pgfxy(104.00,7.00)}{\pgfbox[bottom,left]{\fontsize{9.10}{10.93}\selectfont $c_1$}}
\pgfputat{\pgfxy(16.00,7.00)}{\pgfbox[bottom,left]{\fontsize{9.10}{10.93}\selectfont \makebox[0pt][r]{$a_0$}}}
\pgfputat{\pgfxy(26.00,17.00)}{\pgfbox[bottom,left]{\fontsize{9.10}{10.93}\selectfont \makebox[0pt][r]{$a_1$}}}
\pgfputat{\pgfxy(46.00,27.00)}{\pgfbox[bottom,left]{\fontsize{9.10}{10.93}\selectfont \makebox[0pt][r]{$a_2$}}}
\pgfputat{\pgfxy(86.00,17.00)}{\pgfbox[bottom,left]{\fontsize{9.10}{10.93}\selectfont \makebox[0pt][r]{$a_1$}}}
\end{pgfpicture}%
$$
Given a Motzkin path $\gamma$, we weight each step $(x_{i+1}, y_{i+1})-(x_i, y_i)$
of height $i$ by $a_i$ (resp. $b_i$ and $c_i$)  and define the weight of $\gamma$
as the product of its step weights.  For example, the weight of the above
Motzkin path  is $\w(\gamma)=a_0{a_1}^2 a_2 b_0 b_1 b_2 c_1{c_2}^2 c_3$.
Denote by $\M_{n}$ the set of Motzkin paths of length $n\geq 1$.
It is folklore (see \cite{GJ83}) that
\begin{align}\label{eq:fla}
1+\sum_{n\geq 1}\sum_{\gamma\in \M_{n}}\w(\gamma) z^{n}=\dfrac{1}
{1-b_{0}z-\dfrac{a_0c_1z^2}
{1-b_{1}z-\dfrac{a_1c_2z^2}
{\cdots}}}.
\end{align}

Our starting point is the  following $r$-dilatation of Laguerre's continued fraction:
\begin{align}
\sum_{n=0}^{\infty} (n! r^n) z^{n}&=\dfrac{1}{
1 - b_0z - \cfrac{a_0c_1z^2}{
1 - b_1z - \cfrac{a_1c_2 z^2}{
\cdots}}},
\label{eq:r-euler}
\end{align}
where
$
a_h = r^2,  b_h = (2h+1)r ~ (h\geq 0), c_h = h^2 ~ (h\geq 1).
$
Indeed, this formula is  related to the moment sequence of the simple orthogonal Laguerre polynomials when $r=1$.
Clearly $r^nn!$ is the cardinality of $\Z_r\wr \SS_n$.
In order to make the counterpart of $\Z_r\wr \SS_n$ in terms of Motzkin paths, we need to
generalize the notion of \emph{Laguerre history} \cite{dMV94}.
\begin{defn}[$r$-colored Laguerre history]
An \emph{$r$-colored Laguerre history of length $n$} is a couple $h=(\gamma, \xi)$, where
$\gamma$ is a  Motzkin path of length $n$
and $\xi = (({p_1},{q_1}), \dots, ({p_n},{q_n}))$ is an integer-pair sequence (as labels of steps)
satisfying
\begin{enumerate}[(i)]
\item if the $i$th step is North-East, then  $(p_i,q_i)\in [-(r-1), 0]^2$;
\item if the $i$th step is East, then
$(p_i, q_i)\in [1, h_i]\times [-(r-1), 0]\cup  [-(r-1), 0]\times [1, h_i+1]$;
\item if the $i$th step is South-East, then
$(p_i,q_i)\in [1, h_i]^2$;
\end{enumerate}
where $h_i$ is the  height of the $i$th step  of $\gamma$.
\end{defn}
Counting the number of possible pairs $(p,q)$ for each step of height $h$,
the number of $r$-colored Laguerre histories associated to a Motzkin path $\gamma$ is equal to  the weight $\w(\gamma)$.
Hence, according to \eqref{eq:r-euler}, the number of $r$-colored Laguerre histories of length $n$ is $n! r^n$.

The following is an example of $3$-colored Laguerre history of length 11, where
 the label on the $i$th step stands for $({p_i},{q_i})$.
$$
\centering
\begin{pgfpicture}{-5.20mm}{-6.69mm}{98.00mm}{34.11mm}
\pgfsetxvec{\pgfpoint{0.80mm}{0mm}}
\pgfsetyvec{\pgfpoint{0mm}{0.80mm}}
\color[rgb]{0,0,0}\pgfsetlinewidth{0.30mm}\pgfsetdash{}{0mm}
\pgfmoveto{\pgfxy(0.00,0.00)}\pgflineto{\pgfxy(0.00,40.00)}\pgfstroke
\pgfmoveto{\pgfxy(0.00,40.00)}\pgflineto{\pgfxy(-0.70,37.20)}\pgflineto{\pgfxy(0.70,37.20)}\pgflineto{\pgfxy(0.00,40.00)}\pgfclosepath\pgffill
\pgfmoveto{\pgfxy(0.00,40.00)}\pgflineto{\pgfxy(-0.70,37.20)}\pgflineto{\pgfxy(0.70,37.20)}\pgflineto{\pgfxy(0.00,40.00)}\pgfclosepath\pgfstroke
\pgfmoveto{\pgfxy(0.00,0.00)}\pgflineto{\pgfxy(120.00,0.00)}\pgfstroke
\pgfmoveto{\pgfxy(120.00,0.00)}\pgflineto{\pgfxy(117.20,0.70)}\pgflineto{\pgfxy(117.20,-0.70)}\pgflineto{\pgfxy(120.00,0.00)}\pgfclosepath\pgffill
\pgfmoveto{\pgfxy(120.00,0.00)}\pgflineto{\pgfxy(117.20,0.70)}\pgflineto{\pgfxy(117.20,-0.70)}\pgflineto{\pgfxy(120.00,0.00)}\pgfclosepath\pgfstroke
\pgfcircle[fill]{\pgfxy(10.00,0.00)}{0.80mm}
\pgfcircle[stroke]{\pgfxy(10.00,0.00)}{0.80mm}
\pgfcircle[fill]{\pgfxy(0.00,0.00)}{0.80mm}
\pgfcircle[stroke]{\pgfxy(0.00,0.00)}{0.80mm}
\pgfcircle[fill]{\pgfxy(20.00,10.00)}{0.80mm}
\pgfcircle[stroke]{\pgfxy(20.00,10.00)}{0.80mm}
\pgfcircle[fill]{\pgfxy(30.00,20.00)}{0.80mm}
\pgfcircle[stroke]{\pgfxy(30.00,20.00)}{0.80mm}
\pgfcircle[fill]{\pgfxy(40.00,20.00)}{0.80mm}
\pgfcircle[stroke]{\pgfxy(40.00,20.00)}{0.80mm}
\pgfcircle[fill]{\pgfxy(70.00,10.00)}{0.80mm}
\pgfcircle[stroke]{\pgfxy(70.00,10.00)}{0.80mm}
\pgfputat{\pgfxy(10.00,-5.00)}{\pgfbox[bottom,left]{\fontsize{9.10}{10.93}\selectfont \makebox[0pt]{1}}}
\pgfputat{\pgfxy(20.00,-5.00)}{\pgfbox[bottom,left]{\fontsize{9.10}{10.93}\selectfont \makebox[0pt]{2}}}
\pgfputat{\pgfxy(30.00,-5.00)}{\pgfbox[bottom,left]{\fontsize{9.10}{10.93}\selectfont \makebox[0pt]{3}}}
\pgfputat{\pgfxy(40.00,-5.00)}{\pgfbox[bottom,left]{\fontsize{9.10}{10.93}\selectfont \makebox[0pt]{4}}}
\pgfputat{\pgfxy(50.00,-5.00)}{\pgfbox[bottom,left]{\fontsize{9.10}{10.93}\selectfont \makebox[0pt]{5}}}
\pgfputat{\pgfxy(60.00,-5.00)}{\pgfbox[bottom,left]{\fontsize{9.10}{10.93}\selectfont \makebox[0pt]{6}}}
\pgfputat{\pgfxy(70.00,-5.00)}{\pgfbox[bottom,left]{\fontsize{9.10}{10.93}\selectfont \makebox[0pt]{7}}}
\pgfputat{\pgfxy(80.00,-5.00)}{\pgfbox[bottom,left]{\fontsize{9.10}{10.93}\selectfont \makebox[0pt]{8}}}
\pgfputat{\pgfxy(90.00,-5.00)}{\pgfbox[bottom,left]{\fontsize{9.10}{10.93}\selectfont \makebox[0pt]{9}}}
\pgfputat{\pgfxy(0.00,-5.00)}{\pgfbox[bottom,left]{\fontsize{9.10}{10.93}\selectfont \makebox[0pt]{0}}}
\pgfputat{\pgfxy(-2.00,9.00)}{\pgfbox[bottom,left]{\fontsize{9.10}{10.93}\selectfont \makebox[0pt][r]{1}}}
\pgfputat{\pgfxy(-2.00,19.00)}{\pgfbox[bottom,left]{\fontsize{9.10}{10.93}\selectfont \makebox[0pt][r]{2}}}
\pgfsetlinewidth{0.60mm}\pgfmoveto{\pgfxy(10.00,0.00)}\pgflineto{\pgfxy(20.00,10.00)}\pgfstroke
\pgfmoveto{\pgfxy(0.00,0.00)}\pgflineto{\pgfxy(10.00,0.00)}\pgfstroke
\pgfmoveto{\pgfxy(20.00,10.00)}\pgflineto{\pgfxy(30.00,20.00)}\pgfstroke
\pgfmoveto{\pgfxy(40.00,20.00)}\pgflineto{\pgfxy(50.00,30.00)}\pgfstroke
\pgfmoveto{\pgfxy(30.00,20.00)}\pgflineto{\pgfxy(40.00,20.00)}\pgfstroke
\pgfcircle[fill]{\pgfxy(50.00,30.00)}{0.80mm}
\pgfcircle[stroke]{\pgfxy(50.00,30.00)}{0.80mm}
\pgfmoveto{\pgfxy(50.00,30.00)}\pgflineto{\pgfxy(60.00,20.00)}\pgfstroke
\pgfcircle[fill]{\pgfxy(60.00,20.00)}{0.80mm}
\pgfcircle[stroke]{\pgfxy(60.00,20.00)}{0.80mm}
\pgfmoveto{\pgfxy(60.00,20.00)}\pgflineto{\pgfxy(70.00,10.00)}\pgfstroke
\pgfmoveto{\pgfxy(80.00,10.00)}\pgflineto{\pgfxy(90.00,20.00)}\pgfstroke
\pgfmoveto{\pgfxy(70.00,10.00)}\pgflineto{\pgfxy(80.00,10.00)}\pgfstroke
\pgfmoveto{\pgfxy(90.00,20.00)}\pgflineto{\pgfxy(100.00,10.00)}\pgfstroke
\pgfmoveto{\pgfxy(100.00,10.00)}\pgflineto{\pgfxy(110.00,0.00)}\pgfstroke
\pgfcircle[fill]{\pgfxy(90.00,20.00)}{0.80mm}
\pgfcircle[stroke]{\pgfxy(90.00,20.00)}{0.80mm}
\pgfcircle[fill]{\pgfxy(100.00,10.00)}{0.80mm}
\pgfcircle[stroke]{\pgfxy(100.00,10.00)}{0.80mm}
\pgfcircle[fill]{\pgfxy(80.00,10.00)}{0.80mm}
\pgfcircle[stroke]{\pgfxy(80.00,10.00)}{0.80mm}
\pgfcircle[fill]{\pgfxy(110.00,0.00)}{0.80mm}
\pgfcircle[stroke]{\pgfxy(110.00,0.00)}{0.80mm}
\pgfputat{\pgfxy(110.00,-5.00)}{\pgfbox[bottom,left]{\fontsize{9.10}{10.93}\selectfont \makebox[0pt]{11}}}
\pgfputat{\pgfxy(100.00,-5.00)}{\pgfbox[bottom,left]{\fontsize{9.10}{10.93}\selectfont \makebox[0pt]{10}}}
\pgfputat{\pgfxy(-2.00,29.00)}{\pgfbox[bottom,left]{\fontsize{9.10}{10.93}\selectfont \makebox[0pt][r]{3}}}
\pgfputat{\pgfxy(5.00,37.00)}{\pgfbox[bottom,left]{\fontsize{9.10}{10.93}\selectfont $h = (\gamma, \xi)$}}
\pgfputat{\pgfxy(5.00,2.00)}{\pgfbox[bottom,left]{\fontsize{9.10}{10.93}\selectfont \makebox[0pt]{(-1,1)}}}
\pgfputat{\pgfxy(75.00,12.00)}{\pgfbox[bottom,left]{\fontsize{9.10}{10.93}\selectfont \makebox[0pt]{(1,0)}}}
\pgfputat{\pgfxy(35.00,22.00)}{\pgfbox[bottom,left]{\fontsize{9.10}{10.93}\selectfont \makebox[0pt]{(-2,3)}}}
\pgfputat{\pgfxy(54.00,27.00)}{\pgfbox[bottom,left]{\fontsize{9.10}{10.93}\selectfont (3,1)}}
\pgfputat{\pgfxy(64.00,17.00)}{\pgfbox[bottom,left]{\fontsize{9.10}{10.93}\selectfont (1,1)}}
\pgfputat{\pgfxy(94.00,17.00)}{\pgfbox[bottom,left]{\fontsize{9.10}{10.93}\selectfont (1,2)}}
\pgfputat{\pgfxy(104.00,7.00)}{\pgfbox[bottom,left]{\fontsize{9.10}{10.93}\selectfont (1,1)}}
\pgfputat{\pgfxy(16.00,7.00)}{\pgfbox[bottom,left]{\fontsize{9.10}{10.93}\selectfont \makebox[0pt][r]{(0,-2)}}}
\pgfputat{\pgfxy(26.00,17.00)}{\pgfbox[bottom,left]{\fontsize{9.10}{10.93}\selectfont \makebox[0pt][r]{(-2,0)}}}
\pgfputat{\pgfxy(46.00,27.00)}{\pgfbox[bottom,left]{\fontsize{9.10}{10.93}\selectfont \makebox[0pt][r]{(0,-1)}}}
\pgfputat{\pgfxy(86.00,17.00)}{\pgfbox[bottom,left]{\fontsize{9.10}{10.93}\selectfont \makebox[0pt][r]{(0,0)}}}
\end{pgfpicture}%
$$

When $r=1$, there are two well-known bijections between $\SS_n$ and the Laguerre histories of length $n$
due to  Fran\c con-Viennot and  Foata-Zeilberger,  which correspond
to the two   interpretations of Eulerian polynomials in $\SS_n$:
one using linear statistics and the other  using cyclic statistics.
The connection between these two bijections is explained in \cite{CSZ97} (see also \cite{dMV94, SZ12}).

The \emph{pignose diagram} representation for a permutation was
 introduced in \cite{dMV94, CKJ13}. This is a  useful device  to illustrate the crossings of a permutation by the intersecting  points of two arcs.
Actually, we can associate a pignose diagram to any $r$-colored permutation
$\sigma (= \pi^{\z}) ={\pi_1 \pi_2 \dots \pi_n \choose \z_1 \z_2 \dots \z_n} \in \Z_r\wr \SS_n$
 (see \S~\ref{subsec:wreath})  as follows:
A pair of two vertices positioned side by side enclosed by an ellipse is called a \emph{pignose}.
We arrange $n$ pignoses labeled with $1,2,\dots,n$ from left to right on an  horizontal line. For each $i\in [n]$, we connect the \emph{left} vertex of $i$th pignose and the \emph{right} vertex of $\pi_i$th pignose with an arc:
\begin{itemize}
\item if $i\le \pi_i$ and $\z_i = 0$, draw an arc above the horizontal line;
\item if $\pi_i <i$ and $\z_i = 0$, draw an arc below the horizontal line;
\item if $\z_i>0$, draw an arc starting from the left vertex of $i$th pignose below the horizontal line to the right vertex of $\pi_i$th pignose above the horizontal line  clockwise (like a spiral) and label this spiral arc with $\z_i$;
\item optionally, if $i\le \pi_i$, use a blue color in drawing arcs.
\end{itemize}
$$
\centering
\begin{pgfpicture}{-2.00mm}{-7.86mm}{152.00mm}{18.43mm}
\pgfsetxvec{\pgfpoint{1.00mm}{0mm}}
\pgfsetyvec{\pgfpoint{0mm}{1.00mm}}
\color[rgb]{0,0,0}\pgfsetlinewidth{0.30mm}\pgfsetdash{}{0mm}
\color[rgb]{0,0,1}\pgfmoveto{\pgfxy(6.00,10.00)}\pgfcurveto{\pgfxy(7.93,13.11)}{\pgfxy(11.34,15.00)}{\pgfxy(15.00,15.00)}\pgfcurveto{\pgfxy(18.66,15.00)}{\pgfxy(22.07,13.11)}{\pgfxy(24.00,10.00)}\pgfstroke
\color[rgb]{0,0,0}\pgfmoveto{\pgfxy(61.00,10.00)}\pgfcurveto{\pgfxy(60.47,7.10)}{\pgfxy(57.94,5.00)}{\pgfxy(55.00,5.00)}\pgfcurveto{\pgfxy(52.06,5.00)}{\pgfxy(49.53,7.10)}{\pgfxy(49.00,10.00)}\pgfstroke
\color[rgb]{0,0,1}\pgfmoveto{\pgfxy(86.00,10.00)}\pgfcurveto{\pgfxy(85.54,7.06)}{\pgfxy(82.97,4.92)}{\pgfxy(80.00,5.00)}\pgfcurveto{\pgfxy(77.15,5.07)}{\pgfxy(74.78,7.27)}{\pgfxy(75.00,10.00)}\pgfcurveto{\pgfxy(75.21,12.52)}{\pgfxy(77.55,14.10)}{\pgfxy(80.00,15.00)}\pgfcurveto{\pgfxy(88.30,18.06)}{\pgfxy(97.61,16.12)}{\pgfxy(104.00,10.00)}\pgfstroke
\color[rgb]{0,0,0}\pgfmoveto{\pgfxy(129.00,10.00)}\pgfcurveto{\pgfxy(127.47,13.51)}{\pgfxy(123.78,15.56)}{\pgfxy(120.00,15.00)}\pgfcurveto{\pgfxy(117.25,14.59)}{\pgfxy(114.88,12.64)}{\pgfxy(115.00,10.00)}\pgfcurveto{\pgfxy(115.11,7.46)}{\pgfxy(117.49,5.84)}{\pgfxy(120.00,5.00)}\pgfcurveto{\pgfxy(127.38,2.52)}{\pgfxy(135.53,4.46)}{\pgfxy(141.00,10.00)}\pgfstroke
\pgfputat{\pgfxy(15.00,-5.00)}{\pgfbox[bottom,left]{\fontsize{11.38}{13.66}\selectfont \makebox[0pt]{$i \le \pi_{i}, \z_i=0$}}}
\pgfellipse[stroke]{\pgfxy(62.50,10.00)}{\pgfxy(3.50,0.00)}{\pgfxy(0.00,2.00)}
\pgfcircle[fill]{\pgfxy(61.00,10.00)}{1.00mm}
\pgfcircle[stroke]{\pgfxy(61.00,10.00)}{1.00mm}
\pgfcircle[fill]{\pgfxy(64.00,10.00)}{1.00mm}
\pgfcircle[stroke]{\pgfxy(64.00,10.00)}{1.00mm}
\pgfmoveto{\pgfxy(40.00,10.00)}\pgflineto{\pgfxy(70.00,10.00)}\pgfstroke
\pgfellipse[stroke]{\pgfxy(47.50,10.00)}{\pgfxy(3.50,0.00)}{\pgfxy(0.00,2.00)}
\pgfcircle[fill]{\pgfxy(46.00,10.00)}{1.00mm}
\pgfcircle[stroke]{\pgfxy(46.00,10.00)}{1.00mm}
\pgfcircle[fill]{\pgfxy(49.00,10.00)}{1.00mm}
\pgfcircle[stroke]{\pgfxy(49.00,10.00)}{1.00mm}
\pgfellipse[stroke]{\pgfxy(102.50,10.00)}{\pgfxy(3.50,0.00)}{\pgfxy(0.00,2.00)}
\pgfcircle[fill]{\pgfxy(101.00,10.00)}{1.00mm}
\pgfcircle[stroke]{\pgfxy(101.00,10.00)}{1.00mm}
\pgfcircle[fill]{\pgfxy(104.00,10.00)}{1.00mm}
\pgfcircle[stroke]{\pgfxy(104.00,10.00)}{1.00mm}
\pgfmoveto{\pgfxy(80.00,10.00)}\pgflineto{\pgfxy(110.00,10.00)}\pgfstroke
\pgfellipse[stroke]{\pgfxy(87.50,10.00)}{\pgfxy(3.50,0.00)}{\pgfxy(0.00,2.00)}
\pgfcircle[fill]{\pgfxy(86.00,10.00)}{1.00mm}
\pgfcircle[stroke]{\pgfxy(86.00,10.00)}{1.00mm}
\pgfcircle[fill]{\pgfxy(89.00,10.00)}{1.00mm}
\pgfcircle[stroke]{\pgfxy(89.00,10.00)}{1.00mm}
\pgfellipse[stroke]{\pgfxy(142.50,10.00)}{\pgfxy(3.50,0.00)}{\pgfxy(0.00,2.00)}
\pgfcircle[fill]{\pgfxy(141.00,10.00)}{1.00mm}
\pgfcircle[stroke]{\pgfxy(141.00,10.00)}{1.00mm}
\pgfcircle[fill]{\pgfxy(144.00,10.00)}{1.00mm}
\pgfcircle[stroke]{\pgfxy(144.00,10.00)}{1.00mm}
\pgfmoveto{\pgfxy(120.00,10.00)}\pgflineto{\pgfxy(150.00,10.00)}\pgfstroke
\pgfellipse[stroke]{\pgfxy(127.50,10.00)}{\pgfxy(3.50,0.00)}{\pgfxy(0.00,2.00)}
\pgfcircle[fill]{\pgfxy(126.00,10.00)}{1.00mm}
\pgfcircle[stroke]{\pgfxy(126.00,10.00)}{1.00mm}
\pgfcircle[fill]{\pgfxy(129.00,10.00)}{1.00mm}
\pgfcircle[stroke]{\pgfxy(129.00,10.00)}{1.00mm}
\pgfellipse[stroke]{\pgfxy(22.50,10.00)}{\pgfxy(3.50,0.00)}{\pgfxy(0.00,2.00)}
\pgfcircle[fill]{\pgfxy(21.00,10.00)}{1.00mm}
\pgfcircle[stroke]{\pgfxy(21.00,10.00)}{1.00mm}
\pgfcircle[fill]{\pgfxy(24.00,10.00)}{1.00mm}
\pgfcircle[stroke]{\pgfxy(24.00,10.00)}{1.00mm}
\pgfmoveto{\pgfxy(0.00,10.00)}\pgflineto{\pgfxy(30.00,10.00)}\pgfstroke
\pgfellipse[stroke]{\pgfxy(7.50,10.00)}{\pgfxy(3.50,0.00)}{\pgfxy(0.00,2.00)}
\pgfcircle[fill]{\pgfxy(6.00,10.00)}{1.00mm}
\pgfcircle[stroke]{\pgfxy(6.00,10.00)}{1.00mm}
\pgfcircle[fill]{\pgfxy(9.00,10.00)}{1.00mm}
\pgfcircle[stroke]{\pgfxy(9.00,10.00)}{1.00mm}
\pgfputat{\pgfxy(7.50,4.00)}{\pgfbox[bottom,left]{\fontsize{11.38}{13.66}\selectfont \makebox[0pt]{$i$}}}
\pgfputat{\pgfxy(22.50,4.00)}{\pgfbox[bottom,left]{\fontsize{11.38}{13.66}\selectfont \makebox[0pt]{$\pi_i$}}}
\pgfputat{\pgfxy(62.50,4.00)}{\pgfbox[bottom,left]{\fontsize{11.38}{13.66}\selectfont \makebox[0pt]{$i$}}}
\pgfputat{\pgfxy(47.50,4.00)}{\pgfbox[bottom,left]{\fontsize{11.38}{13.66}\selectfont \makebox[0pt]{$\pi_i$}}}
\pgfputat{\pgfxy(87.50,4.00)}{\pgfbox[bottom,left]{\fontsize{11.38}{13.66}\selectfont \makebox[0pt]{$i$}}}
\pgfputat{\pgfxy(102.50,4.00)}{\pgfbox[bottom,left]{\fontsize{11.38}{13.66}\selectfont \makebox[0pt]{$\pi_i$}}}
\pgfputat{\pgfxy(142.50,4.00)}{\pgfbox[bottom,left]{\fontsize{11.38}{13.66}\selectfont \makebox[0pt]{$i$}}}
\pgfputat{\pgfxy(127.50,4.00)}{\pgfbox[bottom,left]{\fontsize{11.38}{13.66}\selectfont \makebox[0pt]{$\pi_i$}}}
\pgfputat{\pgfxy(55.00,-5.00)}{\pgfbox[bottom,left]{\fontsize{11.38}{13.66}\selectfont \makebox[0pt]{$\pi_{i} < i, \z_i=0$}}}
\pgfputat{\pgfxy(135.00,-5.00)}{\pgfbox[bottom,left]{\fontsize{11.38}{13.66}\selectfont \makebox[0pt]{$\pi_{i} < i, \z_i>0$}}}
\pgfputat{\pgfxy(95.00,-5.00)}{\pgfbox[bottom,left]{\fontsize{11.38}{13.66}\selectfont \makebox[0pt]{$i \le \pi_{i}, \z_i>0$}}}
\end{pgfpicture}%
$$
Note that $\cros \sigma$ is equal to the number of crossing points of two arcs in the pignose
diagram of $\sigma$.
For example, the pignose diagram of
$$\sigma = {4~7~2~5~1~6~3 \choose 0~1~0~1~2~0~0} \in \Z_3 \wr \SS_7$$
is depicted as follows:
$$
\centering
\begin{pgfpicture}{-30.00mm}{-3.27mm}{87.00mm}{24.90mm}
\pgfsetxvec{\pgfpoint{1.00mm}{0mm}}
\pgfsetyvec{\pgfpoint{0mm}{1.00mm}}
\color[rgb]{0,0,0}\pgfsetlinewidth{0.30mm}\pgfsetdash{}{0mm}
\pgfmoveto{\pgfxy(0.00,10.00)}\pgflineto{\pgfxy(85.00,10.00)}\pgfstroke
\pgfellipse[stroke]{\pgfxy(12.50,10.00)}{\pgfxy(3.50,0.00)}{\pgfxy(0.00,2.00)}
\color[rgb]{0,0,1}\pgfmoveto{\pgfxy(11.00,10.00)}\pgfcurveto{\pgfxy(14.69,13.92)}{\pgfxy(19.65,16.40)}{\pgfxy(25.00,17.00)}\pgfcurveto{\pgfxy(26.66,17.19)}{\pgfxy(28.34,17.19)}{\pgfxy(30.00,17.00)}\pgfcurveto{\pgfxy(35.35,16.40)}{\pgfxy(40.31,13.92)}{\pgfxy(44.00,10.00)}\pgfstroke
\pgfmoveto{\pgfxy(21.00,10.00)}\pgfcurveto{\pgfxy(17.80,7.24)}{\pgfxy(14.05,5.20)}{\pgfxy(10.00,4.00)}\pgfcurveto{\pgfxy(5.14,2.57)}{\pgfxy(0.03,2.39)}{\pgfxy(-5.00,3.00)}\pgfcurveto{\pgfxy(-9.72,3.57)}{\pgfxy(-14.54,5.58)}{\pgfxy(-15.00,10.00)}\pgfcurveto{\pgfxy(-15.48,14.62)}{\pgfxy(-10.82,17.59)}{\pgfxy(-6.00,19.00)}\pgfcurveto{\pgfxy(5.31,22.31)}{\pgfxy(17.23,21.99)}{\pgfxy(29.00,22.00)}\pgfcurveto{\pgfxy(37.73,22.01)}{\pgfxy(46.54,22.20)}{\pgfxy(55.00,20.00)}\pgfcurveto{\pgfxy(62.02,18.17)}{\pgfxy(68.52,14.75)}{\pgfxy(74.00,10.00)}\pgfstroke
\color[rgb]{0,0,0}\pgfmoveto{\pgfxy(31.00,10.00)}\pgfcurveto{\pgfxy(31.25,8.07)}{\pgfxy(29.92,6.30)}{\pgfxy(28.00,6.00)}\pgfcurveto{\pgfxy(27.67,5.95)}{\pgfxy(27.33,5.95)}{\pgfxy(27.00,6.00)}\pgfcurveto{\pgfxy(25.08,6.30)}{\pgfxy(23.75,8.07)}{\pgfxy(24.00,10.00)}\pgfstroke
\color[rgb]{0,0,1}\pgfmoveto{\pgfxy(41.00,10.00)}\pgfcurveto{\pgfxy(25.06,0.52)}{\pgfxy(6.29,-3.02)}{\pgfxy(-12.00,0.00)}\pgfcurveto{\pgfxy(-18.36,1.05)}{\pgfxy(-24.78,4.03)}{\pgfxy(-25.00,10.00)}\pgfcurveto{\pgfxy(-25.23,16.21)}{\pgfxy(-18.63,19.64)}{\pgfxy(-12.00,21.00)}\pgfcurveto{\pgfxy(10.60,25.64)}{\pgfxy(34.12,21.72)}{\pgfxy(54.00,10.00)}\pgfstroke
\color[rgb]{0,0,0}\pgfmoveto{\pgfxy(51.00,10.00)}\pgfcurveto{\pgfxy(49.58,8.10)}{\pgfxy(47.90,6.42)}{\pgfxy(46.00,5.00)}\pgfcurveto{\pgfxy(35.09,-3.18)}{\pgfxy(20.57,-1.57)}{\pgfxy(7.00,0.00)}\pgfcurveto{\pgfxy(-0.07,0.82)}{\pgfxy(-6.68,4.49)}{\pgfxy(-5.00,10.00)}\pgfcurveto{\pgfxy(-3.84,13.80)}{\pgfxy(0.77,14.82)}{\pgfxy(5.00,14.00)}\pgfcurveto{\pgfxy(8.27,13.37)}{\pgfxy(11.34,12.00)}{\pgfxy(14.00,10.00)}\pgfstroke
\color[rgb]{0,0,1}\pgfmoveto{\pgfxy(61.00,10.00)}\pgfcurveto{\pgfxy(59.64,11.26)}{\pgfxy(60.21,13.53)}{\pgfxy(62.00,14.00)}\pgfcurveto{\pgfxy(62.33,14.09)}{\pgfxy(62.67,14.09)}{\pgfxy(63.00,14.00)}\pgfcurveto{\pgfxy(64.79,13.53)}{\pgfxy(65.36,11.26)}{\pgfxy(64.00,10.00)}\pgfstroke
\color[rgb]{0,0,0}\pgfmoveto{\pgfxy(71.00,10.00)}\pgfcurveto{\pgfxy(67.11,4.95)}{\pgfxy(61.34,1.70)}{\pgfxy(55.00,1.00)}\pgfcurveto{\pgfxy(53.34,0.82)}{\pgfxy(51.66,0.82)}{\pgfxy(50.00,1.00)}\pgfcurveto{\pgfxy(43.66,1.70)}{\pgfxy(37.89,4.95)}{\pgfxy(34.00,10.00)}\pgfstroke
\pgfputat{\pgfxy(12.50,5.00)}{\pgfbox[bottom,left]{\fontsize{11.38}{13.66}\selectfont \makebox[0pt]{$1$}}}
\pgfputat{\pgfxy(22.50,5.00)}{\pgfbox[bottom,left]{\fontsize{11.38}{13.66}\selectfont \makebox[0pt]{$2$}}}
\pgfputat{\pgfxy(32.50,5.00)}{\pgfbox[bottom,left]{\fontsize{11.38}{13.66}\selectfont \makebox[0pt]{$3$}}}
\pgfputat{\pgfxy(42.50,5.00)}{\pgfbox[bottom,left]{\fontsize{11.38}{13.66}\selectfont \makebox[0pt]{$4$}}}
\pgfputat{\pgfxy(52.50,5.00)}{\pgfbox[bottom,left]{\fontsize{11.38}{13.66}\selectfont \makebox[0pt]{$5$}}}
\pgfputat{\pgfxy(62.50,5.00)}{\pgfbox[bottom,left]{\fontsize{11.38}{13.66}\selectfont \makebox[0pt]{$6$}}}
\pgfputat{\pgfxy(72.50,5.00)}{\pgfbox[bottom,left]{\fontsize{11.38}{13.66}\selectfont \makebox[0pt]{$7$}}}
\pgfcircle[fill]{\pgfxy(11.00,10.00)}{1.00mm}
\pgfcircle[stroke]{\pgfxy(11.00,10.00)}{1.00mm}
\pgfcircle[fill]{\pgfxy(14.00,10.00)}{1.00mm}
\pgfcircle[stroke]{\pgfxy(14.00,10.00)}{1.00mm}
\pgfellipse[stroke]{\pgfxy(22.50,10.00)}{\pgfxy(3.50,0.00)}{\pgfxy(0.00,2.00)}
\pgfcircle[fill]{\pgfxy(24.00,10.00)}{1.00mm}
\pgfcircle[stroke]{\pgfxy(24.00,10.00)}{1.00mm}
\pgfcircle[fill]{\pgfxy(21.00,10.00)}{1.00mm}
\pgfcircle[stroke]{\pgfxy(21.00,10.00)}{1.00mm}
\pgfellipse[stroke]{\pgfxy(32.50,10.00)}{\pgfxy(3.50,0.00)}{\pgfxy(0.00,2.00)}
\pgfcircle[fill]{\pgfxy(34.00,10.00)}{1.00mm}
\pgfcircle[stroke]{\pgfxy(34.00,10.00)}{1.00mm}
\pgfcircle[fill]{\pgfxy(31.00,10.00)}{1.00mm}
\pgfcircle[stroke]{\pgfxy(31.00,10.00)}{1.00mm}
\pgfellipse[stroke]{\pgfxy(42.50,10.00)}{\pgfxy(3.50,0.00)}{\pgfxy(0.00,2.00)}
\pgfcircle[fill]{\pgfxy(44.00,10.00)}{1.00mm}
\pgfcircle[stroke]{\pgfxy(44.00,10.00)}{1.00mm}
\pgfcircle[fill]{\pgfxy(41.00,10.00)}{1.00mm}
\pgfcircle[stroke]{\pgfxy(41.00,10.00)}{1.00mm}
\pgfellipse[stroke]{\pgfxy(52.50,10.00)}{\pgfxy(3.50,0.00)}{\pgfxy(0.00,2.00)}
\pgfcircle[fill]{\pgfxy(54.00,10.00)}{1.00mm}
\pgfcircle[stroke]{\pgfxy(54.00,10.00)}{1.00mm}
\pgfcircle[fill]{\pgfxy(51.00,10.00)}{1.00mm}
\pgfcircle[stroke]{\pgfxy(51.00,10.00)}{1.00mm}
\pgfellipse[stroke]{\pgfxy(62.50,10.00)}{\pgfxy(3.50,0.00)}{\pgfxy(0.00,2.00)}
\pgfcircle[fill]{\pgfxy(64.00,10.00)}{1.00mm}
\pgfcircle[stroke]{\pgfxy(64.00,10.00)}{1.00mm}
\pgfcircle[fill]{\pgfxy(61.00,10.00)}{1.00mm}
\pgfcircle[stroke]{\pgfxy(61.00,10.00)}{1.00mm}
\pgfellipse[stroke]{\pgfxy(72.50,10.00)}{\pgfxy(3.50,0.00)}{\pgfxy(0.00,2.00)}
\pgfcircle[fill]{\pgfxy(74.00,10.00)}{1.00mm}
\pgfcircle[stroke]{\pgfxy(74.00,10.00)}{1.00mm}
\pgfcircle[fill]{\pgfxy(71.00,10.00)}{1.00mm}
\pgfcircle[stroke]{\pgfxy(71.00,10.00)}{1.00mm}
\pgfputat{\pgfxy(-16.00,10.00)}{\pgfbox[bottom,left]{\fontsize{11.38}{13.66}\selectfont \makebox[0pt][r]{$1$}}}
\pgfputat{\pgfxy(-26.00,10.00)}{\pgfbox[bottom,left]{\fontsize{11.38}{13.66}\selectfont \makebox[0pt][r]{$1$}}}
\pgfputat{\pgfxy(-6.00,10.00)}{\pgfbox[bottom,left]{\fontsize{11.38}{13.66}\selectfont \makebox[0pt][r]{$2$}}}
\end{pgfpicture}%
$$
and $\cros(\sigma) = 6$.

\subsection{Bijection $\Phi$}
We construct a bijection $\Phi$ between $\Z_r \wr \SS_n$  and the $r$-colored Laguerre histories of size $n$ by generalizing the Foata-Zeilberger bijection.
Given an $r$-colored permutation $\sigma = \sigma(1) \dots \sigma(n) \in \Z_r \wr \SS_n$,
for $k=1,\ldots, n$,
we build  successively the partial pignose diagrams of the restrictions of $\sigma$ on $[i]$ by drawing the pignoses labled with $1, \ldots, k$ along
with a  half-arc connecting with each  vertex in these pignoses. Denote by $P^{(k)}$ the partial pignose diagram on $[k]$. Let $P^{(0)}$ denote the $n$ pignoses on the horizontal line. Also $\gamma^{(0)} = ((0,0))$ and $\xi^{(0)} = \emptyset$.
For $1 \le k \le n$, assume that the mappings
$P^{(k-1)}\mapsto (\gamma^{(k-1)}, \xi^{(k-1)} )$, where
$$
\text{ $\gamma^{(k-1)}=(\gamma_0, \dots, \gamma_{k-1})$ and $\xi^{(k-1)} = (\xi_1, \dots, \xi_{k-1})$},
$$
are already defined from $\sigma$.  We show how to extend this to $k$.
First we draw a half-arc starting
 from the left vertex of the $k$th pignose in $P^{(k-1)}$ in one of the  three possible ways:
$$
\centering
\begin{pgfpicture}{-30.00mm}{1.31mm}{102.00mm}{26.16mm}
\pgfsetxvec{\pgfpoint{0.80mm}{0mm}}
\pgfsetyvec{\pgfpoint{0mm}{0.80mm}}
\color[rgb]{0,0,0}\pgfsetlinewidth{0.30mm}\pgfsetdash{}{0mm}
\pgfmoveto{\pgfxy(20.00,20.00)}\pgflineto{\pgfxy(65.00,20.00)}\pgfstroke
\pgfcircle[fill]{\pgfxy(41.00,20.00)}{0.80mm}
\pgfcircle[stroke]{\pgfxy(41.00,20.00)}{0.80mm}
\pgfcircle[fill]{\pgfxy(44.00,20.00)}{0.80mm}
\pgfcircle[stroke]{\pgfxy(44.00,20.00)}{0.80mm}
\pgfellipse[stroke]{\pgfxy(42.50,20.00)}{\pgfxy(3.50,0.00)}{\pgfxy(0.00,2.00)}
\color[rgb]{0,0,1}\pgfmoveto{\pgfxy(41.00,20.00)}\pgfcurveto{\pgfxy(43.17,23.48)}{\pgfxy(46.29,26.25)}{\pgfxy(50.00,28.00)}\pgfcurveto{\pgfxy(53.12,29.47)}{\pgfxy(56.55,30.16)}{\pgfxy(60.00,30.00)}\pgfstroke
\color[rgb]{0,0,0}\pgfmoveto{\pgfxy(80.00,20.00)}\pgflineto{\pgfxy(125.00,20.00)}\pgfstroke
\pgfcircle[fill]{\pgfxy(101.00,20.00)}{0.80mm}
\pgfcircle[stroke]{\pgfxy(101.00,20.00)}{0.80mm}
\pgfcircle[fill]{\pgfxy(104.00,20.00)}{0.80mm}
\pgfcircle[stroke]{\pgfxy(104.00,20.00)}{0.80mm}
\pgfellipse[stroke]{\pgfxy(102.50,20.00)}{\pgfxy(3.50,0.00)}{\pgfxy(0.00,2.00)}
\color[rgb]{0,0,1}\pgfmoveto{\pgfxy(101.00,20.00)}\pgfcurveto{\pgfxy(99.12,18.20)}{\pgfxy(97.11,16.53)}{\pgfxy(95.00,15.00)}\pgfcurveto{\pgfxy(84.69,7.55)}{\pgfxy(73.23,12.52)}{\pgfxy(75.00,20.00)}\pgfcurveto{\pgfxy(75.57,22.41)}{\pgfxy(77.76,23.91)}{\pgfxy(80.00,25.00)}\pgfcurveto{\pgfxy(84.69,27.27)}{\pgfxy(89.84,28.30)}{\pgfxy(95.00,29.00)}\pgfcurveto{\pgfxy(103.28,30.13)}{\pgfxy(111.65,30.47)}{\pgfxy(120.00,30.00)}\pgfstroke
\color[rgb]{0,0,0}\pgfmoveto{\pgfxy(-35.00,20.00)}\pgflineto{\pgfxy(10.00,20.00)}\pgfstroke
\pgfcircle[fill]{\pgfxy(-14.00,20.00)}{0.80mm}
\pgfcircle[stroke]{\pgfxy(-14.00,20.00)}{0.80mm}
\pgfcircle[fill]{\pgfxy(-11.00,20.00)}{0.80mm}
\pgfcircle[stroke]{\pgfxy(-11.00,20.00)}{0.80mm}
\pgfellipse[stroke]{\pgfxy(-12.50,20.00)}{\pgfxy(3.50,0.00)}{\pgfxy(0.00,2.00)}
\pgfmoveto{\pgfxy(-14.00,20.00)}\pgfcurveto{\pgfxy(-15.34,17.17)}{\pgfxy(-17.41,14.75)}{\pgfxy(-20.00,13.00)}\pgfcurveto{\pgfxy(-22.95,11.01)}{\pgfxy(-26.44,9.96)}{\pgfxy(-30.00,10.00)}\pgfstroke
\pgfputat{\pgfxy(-12.50,5.00)}{\pgfbox[bottom,left]{\fontsize{9.10}{10.93}\selectfont \makebox[0pt]{$\pi_k < k$}}}
\pgfputat{\pgfxy(42.50,5.00)}{\pgfbox[bottom,left]{\fontsize{9.10}{10.93}\selectfont \makebox[0pt]{$k\le \pi_k, \z_k = 0$}}}
\pgfputat{\pgfxy(102.50,5.00)}{\pgfbox[bottom,left]{\fontsize{9.10}{10.93}\selectfont \makebox[0pt]{$k\le \pi_k, \z_k > 0$}}}
\end{pgfpicture}%
$$
For the first case let
\begin{align*}
p_k
= &\#\set{j: \z_k=\z_j=0, \pi_k< \pi_j < k <j } + \#\set{j: \z_k>0, \z_j=0, \pi_j<k<j}\\
  &+\#\set{j: \z_k>0, \z_j>0, \pi_j<\pi_k < k < j } + 1,
\end{align*}
which is positive,  for the second case let $p_k = 0$, and for the third case let
$p_k = -\z_k$, which  is negative.
If $p_k$ is positive, then $(p_k-1)$ means the number of new crossing points by connecting two vertices by a new arc into a partial pignose diagram, otherwise $-p_k$ means the color of new arc drawn into a partial pignose diagram.

Then we get the partial diagram $P^{(k)}$ by  drawing a half-arc connecting the right vertex in $k$th pignose in the above diagram, for the index $\ell$ such that $\pi_{\ell}= k$, as one of the followings.
$$
\centering
\begin{pgfpicture}{-30.00mm}{1.31mm}{102.00mm}{26.02mm}
\pgfsetxvec{\pgfpoint{0.80mm}{0mm}}
\pgfsetyvec{\pgfpoint{0mm}{0.80mm}}
\color[rgb]{0,0,0}\pgfsetlinewidth{0.30mm}\pgfsetdash{}{0mm}
\pgfmoveto{\pgfxy(-35.00,20.00)}\pgflineto{\pgfxy(10.00,20.00)}\pgfstroke
\pgfcircle[fill]{\pgfxy(-14.00,20.00)}{0.80mm}
\pgfcircle[stroke]{\pgfxy(-14.00,20.00)}{0.80mm}
\pgfcircle[fill]{\pgfxy(-11.00,20.00)}{0.80mm}
\pgfcircle[stroke]{\pgfxy(-11.00,20.00)}{0.80mm}
\pgfellipse[stroke]{\pgfxy(-12.50,20.00)}{\pgfxy(3.50,0.00)}{\pgfxy(0.00,2.00)}
\color[rgb]{0,0,1}\pgfmoveto{\pgfxy(-11.00,20.00)}\pgfcurveto{\pgfxy(-13.17,23.48)}{\pgfxy(-16.29,26.25)}{\pgfxy(-20.00,28.00)}\pgfcurveto{\pgfxy(-23.12,29.47)}{\pgfxy(-26.55,30.16)}{\pgfxy(-30.00,30.00)}\pgfstroke
\color[rgb]{0,0,0}\pgfmoveto{\pgfxy(80.00,20.02)}\pgflineto{\pgfxy(125.00,20.00)}\pgfstroke
\pgfcircle[fill]{\pgfxy(101.00,20.02)}{0.80mm}
\pgfcircle[stroke]{\pgfxy(101.00,20.02)}{0.80mm}
\pgfcircle[fill]{\pgfxy(104.00,20.02)}{0.80mm}
\pgfcircle[stroke]{\pgfxy(104.00,20.02)}{0.80mm}
\pgfellipse[stroke]{\pgfxy(102.50,20.02)}{\pgfxy(3.50,0.00)}{\pgfxy(0.00,2.00)}
\pgfmoveto{\pgfxy(104.00,20.00)}\pgfcurveto{\pgfxy(100.32,22.84)}{\pgfxy(96.28,25.19)}{\pgfxy(92.00,27.00)}\pgfcurveto{\pgfxy(82.68,30.94)}{\pgfxy(73.44,26.67)}{\pgfxy(75.00,20.00)}\pgfcurveto{\pgfxy(75.56,17.59)}{\pgfxy(77.75,16.10)}{\pgfxy(80.00,15.02)}\pgfcurveto{\pgfxy(84.70,12.76)}{\pgfxy(89.84,11.73)}{\pgfxy(95.00,11.02)}\pgfcurveto{\pgfxy(103.28,9.88)}{\pgfxy(111.65,9.54)}{\pgfxy(120.00,10.02)}\pgfstroke
\pgfmoveto{\pgfxy(20.00,20.00)}\pgflineto{\pgfxy(65.00,20.00)}\pgfstroke
\pgfcircle[fill]{\pgfxy(41.00,20.00)}{0.80mm}
\pgfcircle[stroke]{\pgfxy(41.00,20.00)}{0.80mm}
\pgfcircle[fill]{\pgfxy(44.00,20.00)}{0.80mm}
\pgfcircle[stroke]{\pgfxy(44.00,20.00)}{0.80mm}
\pgfellipse[stroke]{\pgfxy(42.50,20.00)}{\pgfxy(3.50,0.00)}{\pgfxy(0.00,2.00)}
\pgfmoveto{\pgfxy(44.00,20.00)}\pgfcurveto{\pgfxy(45.34,17.17)}{\pgfxy(47.41,14.75)}{\pgfxy(50.00,13.00)}\pgfcurveto{\pgfxy(52.95,11.01)}{\pgfxy(56.44,9.96)}{\pgfxy(60.00,10.00)}\pgfstroke
\pgfputat{\pgfxy(-12.50,5.00)}{\pgfbox[bottom,left]{\fontsize{9.10}{10.93}\selectfont \makebox[0pt]{$\ell \le \pi_{\ell}$}}}
\pgfputat{\pgfxy(42.50,5.00)}{\pgfbox[bottom,left]{\fontsize{9.10}{10.93}\selectfont \makebox[0pt]{$\pi_{\ell} < \ell, \z_{\ell} = 0$}}}
\pgfputat{\pgfxy(102.50,5.00)}{\pgfbox[bottom,left]{\fontsize{9.10}{10.93}\selectfont \makebox[0pt]{$\pi_{\ell} < \ell, \z_{\ell} > 0$}}}
\end{pgfpicture}%
$$
For the first case let
\begin{align*}
q_k
= &\#\set{j: \z_{\ell}=\z_j=0, {\ell} < j \le \pi_{\ell}< \pi_j } + \#\set{j: \z_{\ell}>0, \z_j=0, j\le \pi_{\ell} < \pi_j}\\
  &+\#\set{i: \z_i>0, \z_{\ell}>0, i<\ell \le \pi_{\ell}<\pi_i } + 1,
\end{align*}
which is positive,
for the second case let $q_k = 0$, and
for the third case let $q_k = -\z_{\ell}$, which is negative.
Simliarly, if $q_k$ is positive, $(q_k-1)$ means the number of new crossing points by connecting two vertices by a new arc into a partial pignose diagram, otherwise $-q_k$ means the color of new arc drawn into a partial pignose diagram.

Lastly, when $\gamma_{k-1} = (k-1, h_{k-1})$, we get $\gamma_k = (k, h_k)$ by defining
\begin{enumerate}[(i)]
\item $h_k = h_{k-1} - 1$ (South-East) if both of $p_i$ and $q_i$ are positive,
\item $h_k = h_{k-1}$ (East) if exactly one of $p_i$ and $q_i$ is positive, and
\item $h_k = h_{k-1} + 1$ (North-East) if none of $p_i$ and $q_i$ are positive.
\end{enumerate}
Actually, $h_k$ means half of the number of open half-arcs, which do not become complete-arcs, in the partial pignose diagram $P^{(k)}$.

By taking the above process inductively for $k=1, \dots, n$, we obtain the pignose diagram $P=P^{(n)}$ of $\sigma$.
Since there is no half-arcs in $P^{(n)}$, we have $h_n=0$ and $\gamma^{(n)}$ is a Motzkin path of length $n$.
As the $r$-colored Laguerre history corresponding to an $r$-colored permutation $\sigma$, let $\Phi(\sigma) = (\gamma^{(n)}, \xi^{(n)})$.

%

Note that for $r=1$, the above bijection  is  the same as  the Foata-Zeilberger bijection.
In Figure~\ref{fig:bijection}, we run the above algorithm for
the permutation $\sigma = 4~\bar{7}~2~\bar{5}~\bar{\bar{1}}~6~3 \in \Z_3 \wr \SS_7$ in the previous subsection.

\begin{figure}[p]
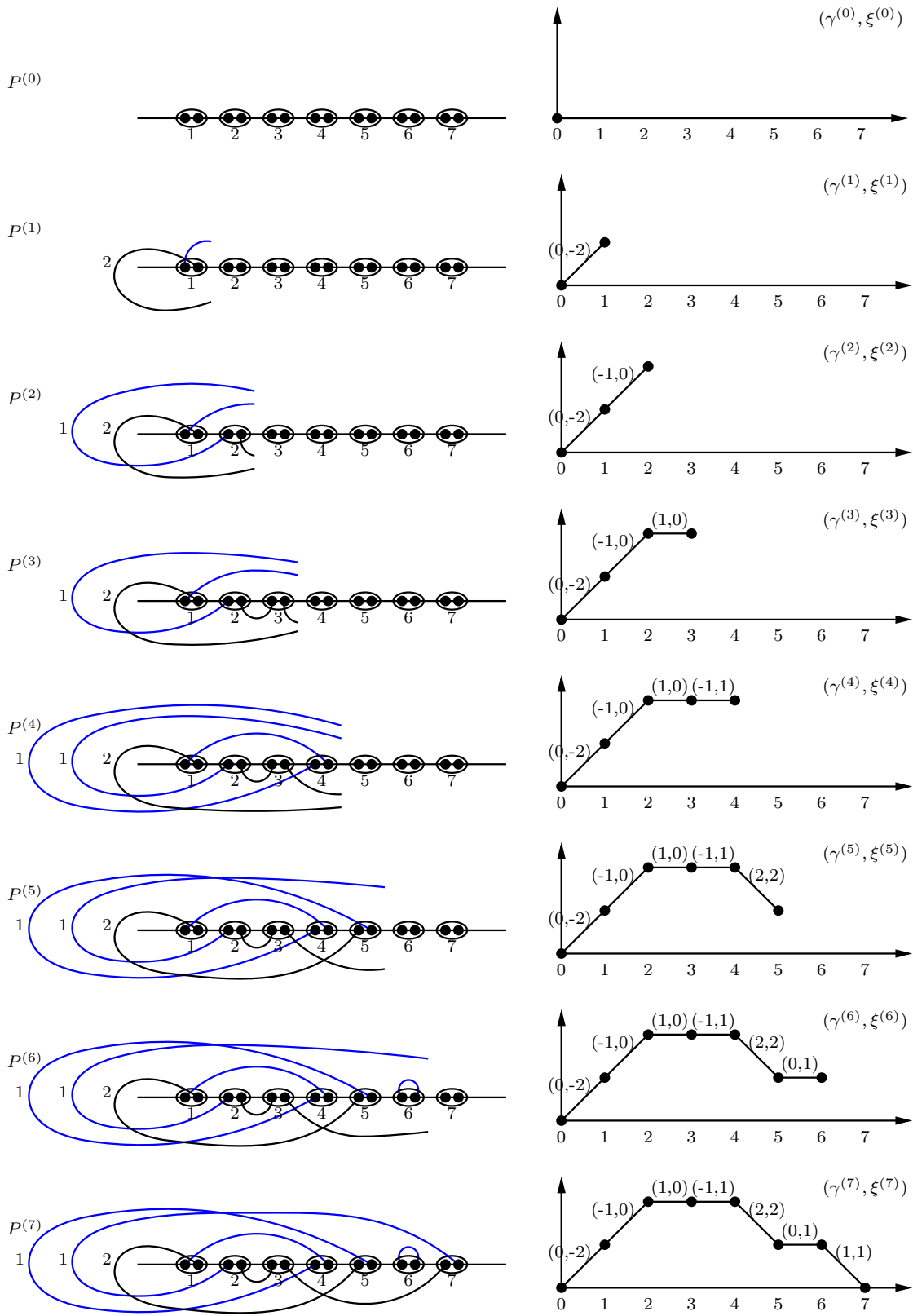


\caption{An illustration of the  bijection $\Phi$}
\label{fig:bijection}
\end{figure}

\subsection{Proof of Lemma~\ref{thm:cfb}}
Introduce some more statistics on an $r$-colored permutation  $\sigma\in \Z_r \wr \SS_n$:
\begin{align*}
\fix \sigma &= \#\set{i\in [n]: i = \pi_i \text{ and } \z_i=0},\\
\fixc \sigma &= \#\set{i\in [n]: i = \pi_i \text{ and } \z_i>0},\\
\wexa \sigma &= \#\set{i\in [n]: i \le \pi_i \text{ and } \z_i=0},\\
\wexc \sigma &= \#\set{i\in [n]: i \le \pi_i \text{ and } \z_i>0},\\
\dropa \sigma &= \#\set{i\in [n]: \pi_i <i \text{ and } \z_i=0},\\
\dropc \sigma &= \#\set{i\in [n]: \pi_i <i \text{ and } \z_i>0},\\
\csumw \sigma &= \sum_{1 \le i\le \pi_i \le n} \z_i,
\quad \csumd \sigma = \sum_{1 \le \pi_i < i \le n} \z_i.
\end{align*}
For the above example, we have
\begin{align*}
\fix \sigma&=1, &\wexa \sigma&=2, &\dropa \sigma&=2,\\
\fixc \sigma&=0, &\wexc \sigma&=2, &\dropc \sigma&=1,
\end{align*}
also  $\csumw \sigma =2$  and $\csumd \sigma =2$.

For $n\geq 1$ define the weight of a permutation $\sigma \in \Z_r \wr \SS_n$ by
\begin{align}
\w(\sigma):= q^{\cros\sigma} t^{\wexa\sigma} {\tilde t}^{\dropa\sigma} w^{\wexc\sigma} {\tilde w}^{\dropc\sigma} x^{\fixa\sigma} {\tilde x}^{\fixc\sigma} y^{\csumw \sigma} {\tilde y}^{\csumd \sigma}.
\label{eq:weight}
\end{align}
Then, we have the following refined version of Lemma~\ref{thm:cfb}:
\begin{align}\label{strong:thm}
1+\sum_{n\ge 1} \sum_{\sigma \in \Z_r \wr \SS_n}\w(\sigma) z^n=
\dfrac{1}{
1-b_0 z - \dfrac{a_0 c_1 z^2}{
1-b_1 z - \dfrac{a_1 c_2 z^2}{
\cdots}}},
\end{align}
where
\begin{align}
\begin{cases}
a_h = (t + w y[r-1]_{y} q^h) ({\tilde t} + {\tilde w}{\tilde  y} [r-1]_{\tilde y} q^{h+1}),\quad c_h = {[h]_q}^2,\\
b_h = ({\tilde t} + {\tilde w}{\tilde  y}[r-1]_{\tilde y} q^h) [h]_q + t( x + q[h]_q) + wy[r-1]_{y} q^h ([h]_q + {\tilde x} q^h). \end{cases}
\label{eq:coeff}
\end{align}
Clearly, the formula \eqref{strong:thm} becomes Lemma~\ref{thm:cfb} for ${\tilde y}=y$ and ${\tilde t}={\tilde w}={\tilde x}=1$.

In order to interpret $\w(\sigma)$ in \eqref{eq:weight} as
 a weight of $r$-colored Laguerre history $\Phi(\sigma)$,
we define  the weight of $k$th step $(\gamma_{k-1},\gamma_{k})$ of height $h_{k-1}$ with its label $(p_k, q_k)$ of Laguerre history $(\gamma, \xi)$ by
\begin{align*}
\w((\gamma_{k-1},\gamma_{k}), (p_k, q_k)) =
\begin{cases}
\w_l(p_k,h_{k-1})\cdot \w_r(q_k,h_{k-1})   & \text{if $p_k > 0$,}\\
x~\w_l(p_k,h_{k-1})\cdot \w_r(q_k,h_{k-1}+1) & \text{if $p_k = 0$ and $q_k = 1$,}\\
\tilde{x}~\w_l(p_k,h_{k-1})\cdot \w_r(q_k,h_{k-1}+1) & \text{if $p_k <0$ and $q_k = h_{k-1}+1$,}\\
\w_l(p_k,h_{k-1})\cdot \w_r(q_k,h_{k-1}+1) & \text{otherwise,}\\
\end{cases}
\end{align*}
where
\begin{align*}
\w_l(z,h) &=
\begin{cases}
q^{z-1}&\text{if $z>0$,}\\
t&\text{if $z=0$,}\\
wy^{-z}q^h&\text{if $z<0$,}\\
\end{cases}
&
\w_r(z,h) &=
\begin{cases}
q^{z-1}&\text{if $z>0$,}\\
\tilde{t}&\text{if $z=0$,}\\
\tilde{w}\tilde{y}^{-z} q^h&\text{if $z<0$.}\\
\end{cases}
\end{align*}
We fine the weight of $r$-colored Laguerre history $(\gamma, \xi)$ of length $n$ as
$$
\w(\gamma, \xi) = \prod_{k=1}^{n} \w((\gamma_{k-1},\gamma_{k}), (p_k, q_k)).
$$
It is easy to check the followings.
\begin{enumerate}[(i)]
\item $k = \pi_k$, $\z_k = 0$ if and only if $(p_k,q_k) = (0,1)$. (for the statistic $\fix$)
\item $k = \pi_k$, $\z_k > 0$ if and only if $(p_k,q_k) = (-\z_k,h_{k-1}+1)$. (for the statistic $\fixc$)
\item $k > \pi_k$ if and only if $p_k>0$ and we make a full arc, by connecting two half-arcs, which meets $(p_k-1)$ open half-arcs. (for the statistic $\cros$).
\item $k \le \pi_k$, $\z_k = 0$ if and only if $p_k=0$. (for the statistic $\wexa$)
\item $k \le \pi_k$, $\z_k > 0$ if and only if $p_k=-\z_k$. (for two statistics $\wexc$ and $\csumw$)
\item $k = \pi_\ell \ge \ell$ if and only if $q_k>0$ and we make a full arc, by connecting two half-arcs, which meets $(q_k-1)$ open half-arcs. (for the statistic $\cros$)
\item $k = \pi_\ell < \ell$, $\z_{\ell} = 0$ if and only if $q_k=0$. (for the statistic $\dropa$)
\item $k = \pi_\ell < \ell$, $\z_{\ell} > 0$ if and only if $q_k=-\z_{\ell}$. (for two statistics $\dropc$ and $\csumd$)
\item if $\z_{\ell} > 0$, we draw a spiral arc, which meets $h$ open half-arcs. (for the statistic $\cros$)
\end{enumerate}
According to the above conditions, given an $r$-colored permutation $\sigma \in \Z_r \wr \SS_n$, we have $\w(\sigma) = \w(\gamma, \xi),$
where $\Phi(\sigma)=(\gamma, \xi)$.

Summing the weight of possible pairs $(p,q)$ for each step of height $h$,
the weight sum of $r$-colored Laguerre histories associated to a Motzkin path $\gamma$ is equal to the weight $\w(\gamma)$ with
\begin{align}
\begin{cases}
a_h = \sum_{p=1-r}^{0} \sum_{q=1-r}^{0} \w(h, (p, q)), \quad c_h = \sum_{p=1}^{h} \sum_{q=1}^{h} \w(h, (p, q)),\\
b_n = \sum_{p=1}^{h} \sum_{q=1-r}^{0} \w(h, (p, q)) + \sum_{p=1-r}^{0} \sum_{q=1}^{h+1} \w(h, (p, q)),
\end{cases}\label{eq:weightsum}
\end{align}
where
\begin{align*}
\w(h, (p, q)) =
\begin{cases}
\w_l(p,h)\cdot \w_r(q,h)   & \text{if $p > 0$,}\\
x~\w_l(p,h)\cdot \w_r(q,h+1) & \text{if $(p,q) = (0,0)$,}\\
\tilde{x}~\w_l(p,h)\cdot \w_r(q,h+1) & \text{if $(p,q) = (0,h+1)$,}\\
\w_l(p,h)\cdot \w_r(q,h+1) & \text{otherwise.}\\
\end{cases}
\end{align*}
We derive \eqref{eq:coeff} from \eqref{eq:weightsum}.
\qed

\medskip
\rmk
It is instructive to look at  the weight of each step
from the corresponding pignose diagram. For $x=\tilde{x}=1$, every vertex is connected to only one half-arc with its weight in one of following six ways:
$$
\centering
\begin{pgfpicture}{-30.00mm}{5.71mm}{102.00mm}{26.16mm}
\pgfsetxvec{\pgfpoint{0.80mm}{0mm}}
\pgfsetyvec{\pgfpoint{0mm}{0.80mm}}
\color[rgb]{0,0,0}\pgfsetlinewidth{0.30mm}\pgfsetdash{}{0mm}
\pgfmoveto{\pgfxy(20.00,20.00)}\pgflineto{\pgfxy(65.00,20.00)}\pgfstroke
\pgfcircle[fill]{\pgfxy(41.00,20.00)}{0.80mm}
\pgfcircle[stroke]{\pgfxy(41.00,20.00)}{0.80mm}
\pgfcircle[fill]{\pgfxy(44.00,20.00)}{0.80mm}
\pgfcircle[stroke]{\pgfxy(44.00,20.00)}{0.80mm}
\pgfellipse[stroke]{\pgfxy(42.50,20.00)}{\pgfxy(3.50,0.00)}{\pgfxy(0.00,2.00)}
\color[rgb]{0,0,1}\pgfmoveto{\pgfxy(41.00,20.00)}\pgfcurveto{\pgfxy(43.17,23.48)}{\pgfxy(46.29,26.25)}{\pgfxy(50.00,28.00)}\pgfcurveto{\pgfxy(53.12,29.47)}{\pgfxy(56.55,30.16)}{\pgfxy(60.00,30.00)}\pgfstroke
\color[rgb]{0,0,0}\pgfputat{\pgfxy(42.50,27.00)}{\pgfbox[bottom,left]{\fontsize{9.10}{10.93}\selectfont \makebox[0pt]{$t$}}}
\pgfmoveto{\pgfxy(-35.00,20.00)}\pgflineto{\pgfxy(10.00,20.00)}\pgfstroke
\pgfcircle[fill]{\pgfxy(-14.00,20.00)}{0.80mm}
\pgfcircle[stroke]{\pgfxy(-14.00,20.00)}{0.80mm}
\pgfcircle[fill]{\pgfxy(-11.00,20.00)}{0.80mm}
\pgfcircle[stroke]{\pgfxy(-11.00,20.00)}{0.80mm}
\pgfellipse[stroke]{\pgfxy(-12.50,20.00)}{\pgfxy(3.50,0.00)}{\pgfxy(0.00,2.00)}
\pgfmoveto{\pgfxy(-14.00,20.00)}\pgfcurveto{\pgfxy(-15.34,17.17)}{\pgfxy(-17.41,14.75)}{\pgfxy(-20.00,13.00)}\pgfcurveto{\pgfxy(-22.95,11.01)}{\pgfxy(-26.44,9.96)}{\pgfxy(-30.00,10.00)}\pgfstroke
\pgfputat{\pgfxy(-12.50,10.50)}{\pgfbox[bottom,left]{\fontsize{9.10}{10.93}\selectfont \makebox[0pt]{$[h]_q$}}}
\pgfmoveto{\pgfxy(80.00,20.00)}\pgflineto{\pgfxy(125.00,20.00)}\pgfstroke
\pgfcircle[fill]{\pgfxy(101.00,20.00)}{0.80mm}
\pgfcircle[stroke]{\pgfxy(101.00,20.00)}{0.80mm}
\pgfcircle[fill]{\pgfxy(104.00,20.00)}{0.80mm}
\pgfcircle[stroke]{\pgfxy(104.00,20.00)}{0.80mm}
\pgfellipse[stroke]{\pgfxy(102.50,20.00)}{\pgfxy(3.50,0.00)}{\pgfxy(0.00,2.00)}
\color[rgb]{0,0,1}\pgfmoveto{\pgfxy(101.00,20.00)}\pgfcurveto{\pgfxy(99.12,18.20)}{\pgfxy(97.11,16.53)}{\pgfxy(95.00,15.00)}\pgfcurveto{\pgfxy(84.69,7.55)}{\pgfxy(73.23,12.52)}{\pgfxy(75.00,20.00)}\pgfcurveto{\pgfxy(75.57,22.41)}{\pgfxy(77.76,23.91)}{\pgfxy(80.00,25.00)}\pgfcurveto{\pgfxy(84.69,27.27)}{\pgfxy(89.84,28.30)}{\pgfxy(95.00,29.00)}\pgfcurveto{\pgfxy(103.28,30.13)}{\pgfxy(111.65,30.47)}{\pgfxy(120.00,30.00)}\pgfstroke
\color[rgb]{0,0,0}\pgfsetdash{{0.30mm}{0.50mm}}{0mm}\pgfmoveto{\pgfxy(93.00,17.00)}\pgfcurveto{\pgfxy(93.51,16.19)}{\pgfxy(94.19,15.51)}{\pgfxy(95.00,15.00)}\pgfcurveto{\pgfxy(95.90,14.43)}{\pgfxy(96.93,14.08)}{\pgfxy(98.00,14.00)}\pgfstroke
\pgfmoveto{\pgfxy(92.00,16.00)}\pgfcurveto{\pgfxy(92.51,15.19)}{\pgfxy(93.19,14.51)}{\pgfxy(94.00,14.00)}\pgfcurveto{\pgfxy(94.90,13.43)}{\pgfxy(95.93,13.08)}{\pgfxy(97.00,13.00)}\pgfstroke
\pgfmoveto{\pgfxy(91.00,15.00)}\pgfcurveto{\pgfxy(91.51,14.19)}{\pgfxy(92.19,13.51)}{\pgfxy(93.00,13.00)}\pgfcurveto{\pgfxy(93.90,12.43)}{\pgfxy(94.93,12.08)}{\pgfxy(96.00,12.00)}\pgfstroke
\pgfputat{\pgfxy(100.00,11.50)}{\pgfbox[bottom,left]{\fontsize{9.10}{10.93}\selectfont $q^h$}}
\pgfputat{\pgfxy(102.50,25.00)}{\pgfbox[bottom,left]{\fontsize{9.10}{10.93}\selectfont \makebox[0pt]{$w y[r-1]_{y}$}}}
\end{pgfpicture}%
$$

$$
\centering
\begin{pgfpicture}{-30.00mm}{5.71mm}{102.00mm}{26.91mm}
\pgfsetxvec{\pgfpoint{0.80mm}{0mm}}
\pgfsetyvec{\pgfpoint{0mm}{0.80mm}}
\color[rgb]{0,0,0}\pgfsetlinewidth{0.30mm}\pgfsetdash{}{0mm}
\pgfmoveto{\pgfxy(20.00,20.00)}\pgflineto{\pgfxy(65.00,20.00)}\pgfstroke
\pgfcircle[fill]{\pgfxy(41.00,20.00)}{0.80mm}
\pgfcircle[stroke]{\pgfxy(41.00,20.00)}{0.80mm}
\pgfcircle[fill]{\pgfxy(44.00,20.00)}{0.80mm}
\pgfcircle[stroke]{\pgfxy(44.00,20.00)}{0.80mm}
\pgfellipse[stroke]{\pgfxy(42.50,20.00)}{\pgfxy(3.50,0.00)}{\pgfxy(0.00,2.00)}
\pgfmoveto{\pgfxy(44.00,20.00)}\pgfcurveto{\pgfxy(45.34,17.17)}{\pgfxy(47.41,14.75)}{\pgfxy(50.00,13.00)}\pgfcurveto{\pgfxy(52.95,11.01)}{\pgfxy(56.44,9.96)}{\pgfxy(60.00,10.00)}\pgfstroke
\pgfputat{\pgfxy(42.50,10.50)}{\pgfbox[bottom,left]{\fontsize{9.10}{10.93}\selectfont \makebox[0pt]{$\tilde{t}$}}}
\pgfmoveto{\pgfxy(-35.00,20.00)}\pgflineto{\pgfxy(10.00,20.00)}\pgfstroke
\pgfcircle[fill]{\pgfxy(-14.00,20.00)}{0.80mm}
\pgfcircle[stroke]{\pgfxy(-14.00,20.00)}{0.80mm}
\pgfcircle[fill]{\pgfxy(-11.00,20.00)}{0.80mm}
\pgfcircle[stroke]{\pgfxy(-11.00,20.00)}{0.80mm}
\pgfellipse[stroke]{\pgfxy(-12.50,20.00)}{\pgfxy(3.50,0.00)}{\pgfxy(0.00,2.00)}
\color[rgb]{0,0,1}\pgfmoveto{\pgfxy(-11.00,20.00)}\pgfcurveto{\pgfxy(-13.17,23.48)}{\pgfxy(-16.29,26.25)}{\pgfxy(-20.00,28.00)}\pgfcurveto{\pgfxy(-23.12,29.47)}{\pgfxy(-26.55,30.16)}{\pgfxy(-30.00,30.00)}\pgfstroke
\color[rgb]{0,0,0}\pgfputat{\pgfxy(-12.50,28.00)}{\pgfbox[bottom,left]{\fontsize{9.10}{10.93}\selectfont \makebox[0pt]{$[h]_q$}}}
\pgfmoveto{\pgfxy(80.00,20.02)}\pgflineto{\pgfxy(125.00,20.00)}\pgfstroke
\pgfcircle[fill]{\pgfxy(101.00,20.02)}{0.80mm}
\pgfcircle[stroke]{\pgfxy(101.00,20.02)}{0.80mm}
\pgfcircle[fill]{\pgfxy(104.00,20.02)}{0.80mm}
\pgfcircle[stroke]{\pgfxy(104.00,20.02)}{0.80mm}
\pgfellipse[stroke]{\pgfxy(102.50,20.02)}{\pgfxy(3.50,0.00)}{\pgfxy(0.00,2.00)}
\pgfmoveto{\pgfxy(104.00,20.00)}\pgfcurveto{\pgfxy(100.32,22.84)}{\pgfxy(96.28,25.19)}{\pgfxy(92.00,27.00)}\pgfcurveto{\pgfxy(82.68,30.94)}{\pgfxy(73.44,26.67)}{\pgfxy(75.00,20.00)}\pgfcurveto{\pgfxy(75.56,17.59)}{\pgfxy(77.75,16.10)}{\pgfxy(80.00,15.02)}\pgfcurveto{\pgfxy(84.70,12.76)}{\pgfxy(89.84,11.73)}{\pgfxy(95.00,11.02)}\pgfcurveto{\pgfxy(103.28,9.88)}{\pgfxy(111.65,9.54)}{\pgfxy(120.00,10.02)}\pgfstroke
\pgfsetdash{{0.30mm}{0.50mm}}{0mm}\pgfmoveto{\pgfxy(93.00,23.02)}\pgfcurveto{\pgfxy(93.51,23.83)}{\pgfxy(94.19,24.51)}{\pgfxy(95.00,25.02)}\pgfcurveto{\pgfxy(95.90,25.59)}{\pgfxy(96.93,25.94)}{\pgfxy(98.00,26.02)}\pgfstroke
\pgfmoveto{\pgfxy(92.00,24.02)}\pgfcurveto{\pgfxy(92.51,24.83)}{\pgfxy(93.19,25.51)}{\pgfxy(94.00,26.02)}\pgfcurveto{\pgfxy(94.90,26.59)}{\pgfxy(95.93,26.94)}{\pgfxy(97.00,27.02)}\pgfstroke
\pgfmoveto{\pgfxy(91.00,25.02)}\pgfcurveto{\pgfxy(91.51,25.83)}{\pgfxy(92.19,26.51)}{\pgfxy(93.00,27.02)}\pgfcurveto{\pgfxy(93.90,27.59)}{\pgfxy(94.93,27.94)}{\pgfxy(96.00,28.02)}\pgfstroke
\pgfputat{\pgfxy(102.50,26.50)}{\pgfbox[bottom,left]{\fontsize{9.10}{10.93}\selectfont \makebox[0pt]{$q^h$}}}
\pgfputat{\pgfxy(102.50,13.00)}{\pgfbox[bottom,left]{\fontsize{9.10}{10.93}\selectfont \makebox[0pt]{$\tilde{w} \tilde{y}[r-1]_{\tilde{y}}$}}}
\end{pgfpicture}%
$$
Here the parameter $h$ stands for the number of open half-arcs, which do not become complete-arcs, above horizontal line in the partial pignose diagram.

\section*{Acknowledgements}
The research of the first author was supported by Basic Science Research Program through the National Research Foundation of Korea (NRF) funded by the Ministry of Science, ICT \& Future Planning (NRF-2012R1A1A1014154) and INHA UNIVERSITY Research Grant.

\end{document}